\theoremstyle{plain}
\newtheorem{introthm}{Theorem}
\newtheorem{theorem}{Theorem}[section]
\newtheorem{lemma}[theorem]{Lemma}
\newtheorem{proposition}[theorem]{Proposition}
\newtheorem{corollary}[theorem]{Corollary}
\theoremstyle{definition}
\newtheorem{definition}[theorem]{Definition}
\newtheorem{example}[theorem]{Example}
\theoremstyle{remark}
\newtheorem{remark}[theorem]{Remark}
\let\oldmarginpar\marginpar
\renewcommand\marginpar[1]{\-\oldmarginpar[\raggedleft\footnotesize #1]{\raggedright\footnotesize #1}}
\DeclareMathOperator{\id}{id}
\DeclareMathOperator{\pr}{pr}
\DeclareMathOperator{\sym}{sym}
\DeclareMathOperator{\Hom}{Hom}
\DeclareMathOperator{\End}{End}
\DeclareMathOperator{\coDer}{coDer}
\DeclareMathOperator{\Bott}{Bott}
\DeclareMathOperator{\pbw}{pbw}
\DeclareMathOperator{\CE}{CE}
\DeclareMathOperator{\dR}{dR}
\DeclareMathOperator{\At}{At}
\DeclareMathOperator{\twist}{tw}
\newcommand{\cA}{\mathcal{A}}
\newcommand{\cB}{\mathcal{B}}
\newcommand{\cC}{\mathcal{C}}
\newcommand{\cD}{\mathcal{D}}
\newcommand{\cE}{\mathcal{E}}
\newcommand{\cI}{\mathcal{I}}
\newcommand{\cL}{\mathcal{L}}
\newcommand{\cM}{\mathcal{M}}
\newcommand{\cN}{\mathcal{N}}
\newcommand{\cO}{\mathcal{O}}
\newcommand{\cQ}{\mathcal{Q}}
\newcommand{\cR}{\mathcal{R}}
\newcommand{\cU}{\mathcal{U}}
\newcommand{\sK}{\mathscr{K}}
\newcommand{\frakg}{\mathfrak{g}}
\newcommand{\frakh}{\mathfrak{h}}
\newcommand{\frakM}{\mathfrak{M}}
\newcommand{\NN}{\mathbb{N}}
\newcommand{\ZZ}{\mathbb{Z}}
\newcommand{\RR}{\mathbb{R}}
\newcommand{\CC}{\mathbb{C}}
\newcommand{\KK}{\mathbb{K}}
\newcommand{\argument}{\mathord{\color{black!25}-}}
\newcommand{\degree}[1]{\abs{#1}}
\newcommand{\from}{\leftarrow}
\newcommand{\xto}[1]{\xrightarrow{#1}}
\newcommand{\abs}[1]{\left|#1\right|}
\newcommand{\liederivative}[1]{\cL_{#1}}
\newcommand{\shuffle}[2]{\mathfrak{S}_{#1}^{#2}}
\newcommand{\sections}[1]{\Gamma\big(#1\big)}
\newcommand{\enveloping}[1]{\cU(#1)}
\newcommand{\lie}[2]{[#1,#2]}
\newcommand{\duality}[2]{\left\langle#1\middle|#2\right\rangle}
\newcommand{\XX}{\mathfrak{X}}
\newcommand{\OO}{\Omega}
\newcommand{\inv}{^{-1}}
\newcommand{\dual}{^{\vee}}
\newcommand{\coder}{d^\nabla}
\newcommand{\half}{\frac{1}{2}}
\newcommand{\cohomology}[1]{H^{#1}}
\newcommand{\hypercohomology}[1]{\mathbb{H}^{#1}}
\newcommand{\atiyahcocycle}[2]{\At^{#1}_{#2}}
\newcommand{\atiyahclass}[1]{\alpha_{#1}}
\newcommand{\atiyahcocycleQ}{\atiyahcocycle{\nabla}{(\cM,Q)}}
\newcommand{\atiyahclassQ}{\atiyahclass{(\cM,Q)}}
\newcommand{\zo}{^{0,1}}
\newcommand{\oz}{^{1,0}}
\newcommand{\curvature}{R^{\nabla}}
\newcommand{\smooth}[1]{C^{\infty}({#1})}
\newcommand{\tangent}[1]{T_{#1}}
\newcommand{\pbwcommutator}{C^{\nabla}}
\newcommand{\SdifferentialM}{\liederivative{Q}}
\newcommand{\UdifferentialM}{\liederivative{Q}^\cD}
\newcommand{\boldX}{\boldsymbol X}
\newcommand{\boldY}{\boldsymbol Y}
\newcommand{\Amap}{A^{\nabla}}
\newcommand{\Bmap}{B^{\nabla}}
\newcommand{\Bmapindex}[1]{\Bmap_{#1}}
\newcommand{\atiyahcocyclegQ}{\atiyahcocycle{\nabla}{(\frakg[1],d_{\CE})}}
\newcommand{\pbwF}{\pbw}
\newcommand{\bfemph}[1]{\textit{\textbf{#1}}}
\newcommand{\brevee}{}
\newcommand{\TkM}{T_{\KK}M}
\newcommand{\OmegaF}{\Omega_F}
\newcommand{\deltaa}{\overline{\delta}}
\newcommand{\contraction}[2]{\left\langle#1\middle|#2\right\rangle}
\newcommand{\sign}{\varepsilon}
\newcommand{\prB}{q}
\title[Dg manifolds, formal exponential maps and homotopy Lie algebras]{Dg manifolds, formal exponential maps\\ and homotopy Lie algebras}
\thanks{Research partially supported by NSF grants DMS-1707545 and DMS-2001599.}
\author{Seokbong Seol}
\address{Department of Mathematics, Pennsylvania State University}
\email{sxs803@psu.edu}
\author{Mathieu Stiénon}
\address{Department of Mathematics, Pennsylvania State University}
\email{stienon@psu.edu}
\author{Ping Xu}
\address{Department of Mathematics, Pennsylvania State University}
\email{ping@math.psu.edu}
\begin{document}

\begin{abstract}
This paper is devoted to the study of the relation between
`formal exponential maps,' the Atiyah class, and Kapranov
$L_\infty[1]$ algebras associated with dg manifolds in the $C^\infty$ context.
We prove that, for a dg manifold,
a `formal exponential map'
exists if and only if the Atiyah class vanishes.
Inspired by Kapranov's construction of a homotopy
Lie algebra associated with the holomorphic tangent bundle
of a complex manifold, we prove that the space of vector fields on
a dg manifold admits an $L_\infty[1]$ algebra structure, unique up to
isomorphism, whose unary bracket is
the Lie derivative w.r.t.\ the homological vector field,
whose binary bracket is a 1-cocycle representative of the Atiyah class,
and whose higher multibrackets can be computed by a recursive formula.
For the dg manifold $(T_{X}^{0,1}[1],\bar{\partial})$ arising from
a complex manifold $X$, we prove that this $L_\infty[1]$ algebra structure
is quasi-isomorphic to the standard $L_\infty[1]$ algebra structure
on the Dolbeault complex $\Omega^{0,\bullet}(T^{1,0}_X)$.
\end{abstract}

\maketitle

\tableofcontents


\section{Introduction}

This paper, which is a sequel to \cite{MR3319134},
is devoted to the study of some differential geometric
aspects of dg manifolds in the $C^\infty$ context.
Dg manifolds (a.k.a.\ $Q$-manifolds \cite{MR1432574,arXiv:math/0605356,MR1230027})
have increasingly attracted attention recently due to
their relevance in various fields of mathematics, in particular,
mathematical physics. They first appeared in the mathematical physics literature
in the study of BRST operators used to describe gauge symmetries.
They play an essential role in the so called AKSZ formalism in the study of
sigma model quantum field theories \cite{MR1432574,MR2819233}.
They arise naturally in a variety of situations in differential geometry,
Lie theory, representation theory and homotopy algebras
\cite{MR3293862,arXiv:1903.02884,MR2768006,MR2971727,MR3000478}.
They are closely related to the emerging fields of derived differential geometry
\cite{arXiv:2006.01376,arXiv:1212.3745,MR3121621,MR3221297,MR4036665,arXiv:1804.07622,MR2641940}
and higher Lie algebroids \cite{MR4091493,MR3090103,MR2840338,MR3771614,MR3893501,
MR4007376,arXiv:2001.01101,MR1958835,arXiv:1903.02884,MR2768006,MR2223155}
(see also \cite[Letters~7 and~8]{arXiv:1707.00265}).

Recall that a \bfemph{dg manifold} is a $\ZZ$-graded manifold
$\cM$ endowed with a homological vector field,
i.e.\ a degree $+1$ derivation $Q$ of $C^\infty(\cM)$ satisfying $\lie{Q}{Q}=0$.
When the underlying $\ZZ$-graded manifold $\cM$ is a $\ZZ$-graded vector space,
a dg manifold is equivalent to a finite dimensional curved $L_\infty$ algebra
(or more precisely a curved $L_\infty [1]$ algebra).
Any complex manifold naturally gives rise to a dg manifold,
as does any foliation of a smooth manifold.
See Examples~\ref{example-one} and~\ref{example-two}.

The exponential map plays an important role
in classical differential geometry.
In graded geometry, it turns out that a certain `formal exponential map' is more useful.
Let us describe examples,
which illustrate the concept of `formal exponential map' we have in mind.
First of all,
let us recall the relation between exponential map
and \bfemph{Poincaré--Birkhoff--Witt isomorphism} (PBW isomorphism in short)
in classical Lie theory.
Let $G$ be a Lie group and let $\frakg$ be its Lie algebra.
The space $D'_0(\frakg)$ of distributions on $\frakg$
with support $\{0\}$ is canonically identified with the symmetric tensor algebra $S(\frakg)$,
while the space $D'_e(G)$ of distributions on $G$ with support $\{e\}$
is canonically identified with the universal enveloping algebra $\enveloping{\frakg}$.
The classical Lie-theoretic exponential map $\exp:\frakg\to G$,
which is a local diffeomorphism near $0$,
can be used to push forward the distributions on the Lie algebra
to distributions on the Lie group.
The induced isomorphism
$S(\frakg)\cong D'_0(\frakg)\xto{\sim} D'_e(G)\cong\enveloping{\frakg}$
is precisely the symmetrization map realizing the well known PBW isomorphism.
This construction has an analogue for smooth manifolds.
However, it requires a choice of affine connection.
Given a smooth manifold $M$, let $R$ denote its algebra of smooth real-valued
functions $C^\infty(M)$.
Each affine connection $\nabla$ on $M$ determines an exponential map
\begin{equation}\label{eq:exp1}
\exp^{\nabla}: T_M\to M\times M,
\end{equation}
which is a local diffeomorphism of fiber bundles
\[ \begin{tikzcd}
T_M \arrow[ r, "\exp^{\nabla}"] \arrow[d, "\pi"'] 
& M\times M \arrow[d, "\pr_1"] 
\\ M \arrow[r, "\id"'] & M 
\end{tikzcd} \]
from a neighborhood of the zero section of $T_M$
to a neighborhood of the diagonal $\Delta$ in $M\times M$.
The space of fiberwise distributions on the vector bundle $\pi: T_M\to M$
with support the zero section can be identified, as an $R$-coalgebra,
to $\sections{S(T_M)}$.
On the other hand, the space of fiberwise distributions on the fiber bundle
$\pr_1:M\times M\to M$ with support the diagonal $\Delta$ can be identified,
as an $R$-coalgebra, to the space $\cD(M)$ of differential operators on $M$.
Pushing distributions forward through the exponential map \eqref{eq:exp1},
we obtain an isomorphism of $R$-coalgebras
\[ \pbw^{\nabla}:\sections{S(T_M)}\to\cD(M) ,\]
called PBW map in
\cite{MR2989383,MR4271478}.
In other words, $\pbw^{\nabla}$ is the fiberwise $\infty$-order jet
(along the zero section) of the exponential map \eqref{eq:exp1}
arising from the connection $\nabla$.
Therefore, one can consider it as a `formal exponential map'
associated with the affine connection $\nabla$.

We have the following explicit formula for $\pbw^{\nabla}$:
\[ \pbw^{\nabla}(X_0 \odot \cdots \odot X_k)(f) \\
= \left.\frac{d}{dt_0}\right|_0 \left.\frac{d}{dt_1}\right|_0 \cdots
\left.\frac{d}{dt_k}\right|_0
f\big(\exp(t_0 X_0 + t_1 X_1 +\cdots + t_k X_k)\big) ,\]
for all $X_0, X_1,\cdots, X_k \in\Gamma(T_{M})$ and $f\in C^\infty(M)$.

It turns out that the map $\pbw^{\nabla}$ admits a nice
recursive characterization which can be described in a purely algebraic way
\cite{MR2989383,MR4271478} involving the connection $\nabla$,
but not the exponential map \eqref{eq:exp1}.
Therefore, despite the geometric origin of the map $\pbw^{\nabla}$,
this `formal exponential map' still makes sense algebraically in a much wider context.
By way of this purely algebraic description, the `formal exponential map' was
extended to the context of $\ZZ$-\emph{graded} manifolds over
the field $\KK$ (with $\KK=\RR$ or $\CC$) by Liao--Sti\'enon \cite{MR3910470}.
The PBW map:
\begin{equation}\label{defnpbw1}
\pbw^{\nabla}:\sections{S(\tangent{\cM})}\to\cD(\cM)
\end{equation}
arising from an affine connection $\nabla$ on a $\ZZ$-graded manifold $\cM$
can be thought of as the induced formal exponential map
(or the fiberwise $\infty$-order jet) of a `virtual exponential map:'
\begin{equation}\label{eq:expcm1}
\exp^{\nabla}:\tangent{\cM}\to\cM\times\cM.
\end{equation}

Now, let $(\cM,Q)$ be a dg manifold. Then,
both $\Gamma(S(\tangent{\cM}))$ and $\cD(\cM)$ in~\eqref{defnpbw1}
are dg coalgebras over the dg ring $(\smooth{\cM},Q)$
--- see Propositions~\ref{pro:2.2} and~\ref{pro:2.3}.
Here $\big(\sections{S(T_\cM)},\SdifferentialM\big)$
can be understood as the space of fiberwise dg distributions
on the dg vector bundle $\pi:T_\cM\to\cM$
with support the zero section
--- the homological vector field on $T_\cM$ is the complete lift $\hat{Q}$
of the homological vector field $Q\in\XX(\cM)$ \cite{MR3319134,MR4276044}.
On the other hand, $\big(\cD(\cM),\UdifferentialM\big)$
can be identified with the space of fiberwise dg distributions
on the dg fiber bundle $\pr_1:\cM\times\cM\to\cM$
with support the diagonal $\Delta\in\cM\times\cM$
--- the homological vector field on $\cM\times\cM$ is $(Q,Q)$.
Recall that for an ordinary smooth manifold $M$, equipped with a vector field $Q$,
the exponential map \eqref{eq:exp1}
arising from a choice of affine connection $\nabla$ on $M$ identifies
the complete lift\footnote{See \cite{MR0350650}.}
$\hat{Q}\in\XX(T_M)$ of $Q\in\XX(M)$ with the vector field $(Q,Q)\in\XX(M\times M)$
if and only if the connection $\nabla$ is invariant under the flow of $Q$.
In the similar fashion, one may wonder whether the `virtual exponential map' \eqref{eq:expcm1}
is a morphism of dg manifolds.
On the level of fiberwise $\infty$-order jets,
this is equivalent to asking whether the map
$\pbw^{\nabla}:\big(\sections{S(T_\cM)},\SdifferentialM\big)
\to\big(\cD(\cM),\UdifferentialM\big)$
is an isomorphism of dg coalgebras over $(\smooth{\cM},Q)$.
As in classical geometry, one expects that
this would be true if the affine connection $\nabla$ on $\cM$
is invariant under the (virtual) flow of the homological vector field $Q$;
in other words, if the Atiyah class of the dg manifold $(\cM,Q)$ vanishes.

Our first main theorem confirms this assertion:

\begin{introthm}[Theorem~\ref{theorem1}]
Let $(\cM,Q)$ be a dg manifold.
The Atiyah class $\atiyahclassQ$ vanishes
if and only if there exists a torsion-free affine connection $\nabla$ on $\cM$ such that
\[ \pbw^\nabla:\big(\sections{S(T_\cM)},\SdifferentialM\big)
\to\big(\cD(\cM),\UdifferentialM\big) \]
is an isomorphism of dg coalgebras over $(C^\infty(\cM),Q)$.
\end{introthm}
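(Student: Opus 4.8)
The plan is to reduce the statement to a single clean equivalence at the level of coderivations, and then to treat the passage between the Atiyah cocycle and its cohomology class separately. Since $\pbw^\nabla$ is, for \emph{every} affine connection $\nabla$, an isomorphism of $\sections{S(\tangent{\cM})}$ onto $\cD(\cM)$ as coalgebras over $\smooth{\cM}$ (this is the PBW isomorphism recalled above, valid with no condition on $\nabla$), the map $\pbw^\nabla$ is a morphism of dg coalgebras over $(\smooth{\cM},Q)$ if and only if it intertwines the two differentials, i.e.\ $\pbw^\nabla\circ\SdifferentialM=\UdifferentialM\circ\pbw^\nabla$. Transporting $\UdifferentialM$ back through the isomorphism, this is equivalent to saying that the degree $+1$ coderivation
\[ \tilde{Q}:=(\pbw^\nabla)\inv\circ\UdifferentialM\circ\pbw^\nabla \]
of $\sections{S(\tangent{\cM})}$ coincides with $\SdifferentialM$. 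Here $\tilde{Q}$ is indeed a square-zero coderivation covering $Q$, being the conjugate of the square-zero coderivation $\UdifferentialM$ by a coalgebra isomorphism.

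First I would exploit that $\sections{S(\tangent{\cM})}$ is the cofree conilpotent cocommutative coalgebra cogenerated by $\sections{\tangent{\cM}}$ over $\smooth{\cM}$, so that both $\tilde{Q}$ and $\SdifferentialM$ are determined by their corestrictions to the cogenerators, that is by their components $\lambda_k\colon\sections{S^k(\tangent{\cM})}\to\sections{\tangent{\cM}}$. The coderivation $\SdifferentialM$ is the complete lift and hence \emph{linear}: its only nonzero component is the degree-one part $X\mapsto\lie{Q}{X}$ on $S^1$. Because $\pbw^\nabla$ preserves the PBW filtration and induces the identity on the associated graded, one checks that the corresponding components of $\tilde{Q}$ satisfy $\lambda_0=0$ and $\lambda_1=\lie{Q}{\argument}$ for every $\nabla$, so the content is concentrated in the components $\lambda_k$ with $k\geq 2$. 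These $\lambda_k$ are exactly the higher Kapranov multibrackets attached to $(\cM,Q,\nabla)$; in particular $\lambda_2=\atiyahcocycleQ$ is the Atiyah cocycle, and the recursive formula expresses each $\lambda_{k+1}$ as a covariant-derivative-type expression $\coder\lambda_k$ built from $\lambda_k$ and $\nabla$. I would therefore record the key equivalence
\[ \text{$\pbw^\nabla$ is a dg coalgebra isomorphism}\iff \lambda_k=0\ \text{for all }k\geq 2\iff \atiyahcocycleQ=0. \]
The first equivalence is the coderivation reduction above; in the second, the implication $(\Leftarrow)$ is immediate while $(\Rightarrow)$ follows from the recursion, since $\lambda_2=\atiyahcocycleQ=0$ forces $\lambda_3=\coder\lambda_2=0$ and, inductively, $\lambda_k=0$ for all $k\geq 2$.

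It then remains to connect the existence of a torsion-free $\nabla$ with $\atiyahcocycleQ=0$ to the vanishing of $\atiyahclassQ$. For the ``only if'' direction of the theorem, if some torsion-free $\nabla$ makes $\pbw^\nabla$ a dg coalgebra isomorphism, then $\atiyahcocycleQ=0$ by the key equivalence, whence $\atiyahclassQ=[\atiyahcocycleQ]=0$. For the ``if'' direction, I would start from an arbitrary torsion-free connection $\nabla_0$ (obtained by symmetrizing any connection) and use the standard fact that replacing $\nabla_0$ by $\nabla_0+\Gamma$, with $\Gamma\in\sections{\Hom(S^2\tangent{\cM},\tangent{\cM})}$ of degree $0$, changes the Atiyah cocycle by $\liederivative{Q}\Gamma$ and preserves torsion-freeness. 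Since $\atiyahclassQ=0$ means $\atiyahcocycle{\nabla_0}{(\cM,Q)}=\liederivative{Q}\Gamma$ is exact, the choice $\nabla:=\nabla_0-\Gamma$ yields a torsion-free connection with $\atiyahcocycleQ=0$, so $\pbw^\nabla$ is a dg coalgebra isomorphism by the key equivalence. One small point to verify is that the trivializing $\Gamma$ may be taken graded-symmetric in its two arguments; this is automatic because $\liederivative{Q}$ commutes with the graded flip of the two covariant slots, so the symmetric part of any trivialization already trivializes the (symmetric) cocycle $\atiyahcocycleQ$.

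The main obstacle is the identification of the components $\lambda_k$ of the transported differential $\tilde{Q}$ with the Kapranov multibrackets, and in particular the explicit computation that $\lambda_2$ equals $\atiyahcocycleQ$ with the correct graded signs: this requires expanding $\UdifferentialM\bigl(\pbw^\nabla(X\odot Y)\bigr)$ to second order in the PBW filtration by means of the recursive characterization of $\pbw^\nabla$ and comparing it, term by term, with $\pbw^\nabla\bigl(\SdifferentialM(X\odot Y)\bigr)$. Once this identification and the accompanying recursion $\lambda_{k+1}=\coder\lambda_k$ are in hand, the remaining propagation and class-level arguments are formal.
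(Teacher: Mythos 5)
Your proposal is correct and follows essentially the same route as the paper: the paper introduces $\pbwcommutator=\UdifferentialM\circ\pbw^\nabla-\pbw^\nabla\circ\SdifferentialM$, shows it vanishes in filtration degrees $\le 1$, equals $-\atiyahcocycleQ$ in degree $2$, and satisfies a recursion in terms of lower-degree values that propagates vanishing (Proposition~\ref{formula1}), which is exactly your reduction to the Taylor components of $\tilde{Q}-\SdifferentialM$, and it likewise disposes of the cocycle-versus-class step by modifying a torsion-free connection by a symmetric tensor. The only caveat is that the recursion is not literally $\lambda_{k+1}=\coder\lambda_k$ (curvature corrections enter unless $\nabla$ is flat), but your argument only uses that $\lambda_{k+1}$ is a linear expression in the lower components with no constant term, which is what the paper's formula~\eqref{formulaA3} provides.
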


The Atiyah class of a dg manifold was first introduced by Shoikhet
\cite{arXiv:math/9812009} in terms of Lie algebra cohomology
and $1$-jets of tangent bundles, appeared also in
the work of Lyakhovich--Mosman--Sharapov \cite[Footnote~6]{MR2608525},
and was studied systematically in~\cite{MR3319134}.
The Atiyah class of dg manifolds plays
a crucial role in the Kontsevich--Duflo type theorem for dg manifolds
\cite{MR3754617,MR4276044}.
Below we recall its definition in terms of affine connections \cite{MR3319134}.

Let $(\cM,Q)$ be a dg manifold. Given an affine connection $\nabla$ on $\cM$,
consider the $(1,2)$-tensor
$\atiyahcocycleQ\in\sections{\cM;\tangent{\cM}^{\vee}\otimes\End(\tangent{\cM})}$
of degree +1 defined by the relation
\[\atiyahcocycleQ (X,Y) = [Q, \nabla_{X}Y] - \nabla_{[Q,X]}Y -(-1)^{\degree X} \nabla_{X}[Q,Y] ,\]
for any homogeneous vector fields $X,Y\in\XX(\cM)$.
Since $\liederivative{Q}(\atiyahcocycleQ)=0$,
the element $\atiyahcocycleQ$ is a 1-cocycle called the \bfemph{Atiyah cocycle}
associated with the affine connection $\nabla$. The cohomology class
\[ \atiyahclassQ:=[\atiyahcocycleQ]\in\cohomology{1}
\big(\sections{\cM;\tangent{\cM}^\vee\otimes\End(\tangent{\cM})}^\bullet,\cQ\big) \]
does not depend on the choice of connection $\nabla$,
and therefore is an intrinsic characteristic class
called \bfemph{Atiyah class} of the dg manifold $(\cM,Q)$
\cite{MR3319134} --- see Proposition~\ref{Atiyahproperty}.

As shown by the pioneering work of Kapranov \cite{MR1671737,MR2431634,MR2661534},
the Atiyah class of a holomorphic vector bundle gives rise to $L_\infty[1]$ algebras.
These $L_\infty[1]$ algebras play an important role
in derived geometry~\cite{MR2657369,MR2472137,MR2431634}
and the construction of Rozansky--Witten invariants
\cite{MR1671737,MR1671725,MR2661534,arXiv:math/0404360,MR3322372}.

It is natural to expect that the Atiyah cocycle of a dg manifold
gives rise to an $L_\infty[1]$ algebra in a similar fashion.
This is indeed true: the following theorem was announced in~\cite{MR3319134},
but a proof was omitted.
We will give a complete proof in the present paper.

\begin{introthm}[Theorem~\ref{cor:main}]\label{cor:main1}
Let $(\cM,Q)$ be a dg manifold.
Each choice of an affine connection $\nabla$ on $\cM$
determines an $L_\infty[1]$ algebra structure
on the space of vector fields $\XX(\cM)$.
While the unary bracket
$\lambda_1:S^{1}\big(\XX(\cM)\big)\to\XX(\cM)$
is the Lie derivative $\liederivative{Q}$ along the homological vector field,
the higher multibrackets
$\lambda_{k}:S^{k}\big(\XX(\cM)\big)\to\XX(\cM)$,
with $k\geq 2$, arise as the composition
\[ \lambda_{k}: S^{k}\big(\XX(\cM)\big)\to\sections{S^{k}(\tangent{\cM})}\xto{R_{k}} \XX(\cM)\]
induced by a family of sections $\{R_k\}_{k\geq 2}$ of the vector bundles
$S^k(T^\vee_\cM)\otimes T_\cM$ starting with $R_2=-\atiyahcocycleQ$.

Furthermore, the $L_{\infty}[1]$ algebra structures on $\XX(\cM)$
arising from different choices of connections are all canonically isomorphic.
\end{introthm}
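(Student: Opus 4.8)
The plan is to transport the canonical codifferential on the coalgebra of differential operators across the PBW isomorphism and to read off the resulting $L_\infty[1]$ brackets. Recall the standard dictionary: an $L_\infty[1]$ algebra structure on a graded vector space $V$ is the same datum as a degree $+1$ coderivation $D$ of the cofreely cogenerated coalgebra $S(V)$ with $D^2=0$, the multibrackets $\lambda_k\colon S^k(V)\to V$ being the homogeneous components of the corestriction $\pr_1\circ D$. I would apply this with $V=\XX(\cM)$, but work fiberwise over the dg ring $(\smooth{\cM},Q)$. By Propositions~\ref{pro:2.2} and~\ref{pro:2.3}, $\UdifferentialM$ is a degree $+1$ coderivation of the coalgebra $\cD(\cM)$ squaring to zero; since $\pbw^{\nabla}$ is an isomorphism of coalgebras over $\smooth{\cM}$, the conjugate
\[ D := (\pbw^{\nabla})\inv\circ\UdifferentialM\circ\pbw^{\nabla} \]
is a degree $+1$ coderivation of $\sections{S(\tangent{\cM})}$ with $D^2=0$. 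This $D$ is the codifferential whose brackets I want to extract.

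Two structural facts would pin down the brackets. First, $\pbw^{\nabla}$ carries the order filtration of $\cD(\cM)$ to the symmetric-degree filtration and induces the identity on associated graded, while $\UdifferentialM$ (the graded commutator with the first-order operator $Q$) preserves the order filtration and induces $\liederivative{Q}$ on associated graded; hence $D$ preserves the symmetric-degree filtration and agrees with $\liederivative{Q}$ to leading order. This forces the degree-$1$ corestriction of $D$ to equal $\liederivative{Q}$, gives $\lambda_1=\liederivative{Q}$, and shows that the remaining corestriction components lower symmetric degree (so there is no curvature term). Second, $D-\liederivative{Q}$ is tensorial: for $f\in\smooth{\cM}$ one checks that the graded commutators $[\UdifferentialM,f]$ and $[\liederivative{Q},f]$ are both multiplication by $Qf$, and since $\pbw^{\nabla}$ is $\smooth{\cM}$-linear one gets $[D,f]=Qf=[\liederivative{Q},f]$, so $D-\liederivative{Q}$ commutes with multiplication by functions. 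Combining the two facts, the corestriction of $D$ in each degree $k\geq 2$ is an $\smooth{\cM}$-linear map $\sections{S^{k}(\tangent{\cM})}\to\sections{\tangent{\cM}}$, i.e.\ a section $R_k$ of $S^k(T^\vee_\cM)\otimes T_\cM$. Setting $\lambda_1=\liederivative{Q}$ and letting $\lambda_k$ ($k\geq2$) be the composite of the natural map $S^{k}(\XX(\cM))\to\sections{S^{k}(\tangent{\cM})}$ with $R_k$, the identity $D^2=0$ is exactly the system of $L_\infty[1]$ relations for $\{\lambda_k\}$, and it holds automatically because $D$ is conjugate to the square-zero operator $\UdifferentialM$.

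It then remains to make the brackets explicit. I would compute the degree-$2$ corestriction of $D$ directly from the recursive description of $\pbw^{\nabla}$ and the definition of $\UdifferentialM$: the leading-order failure of $\pbw^{\nabla}$ to intertwine $\SdifferentialM$ and $\UdifferentialM$ is measured precisely by the $(1,2)$-tensor $\atiyahcocycleQ$, yielding $R_2=-\atiyahcocycleQ$ (the infinitesimal shadow of Theorem~\ref{theorem1}). The recursion for the higher $R_k$ would be obtained by feeding the recursive formula for $\pbw^{\nabla}$ into $D=(\pbw^{\nabla})\inv\UdifferentialM\pbw^{\nabla}$ and collecting terms by symmetric degree. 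I expect this explicit bookkeeping — isolating the genuinely new tensorial contribution in each degree and writing it in closed recursive form — to be the main obstacle, whereas the existence of the structure and the $L_\infty[1]$ axioms are formal consequences of transporting the codifferential.

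Finally, for the independence of the connection I would compare the codifferentials $D^{\nabla}$ and $D^{\nabla'}$ attached to connections $\nabla,\nabla'$. The composite $\phi:=(\pbw^{\nabla'})\inv\circ\pbw^{\nabla}$ is an $\smooth{\cM}$-linear coalgebra automorphism of $\sections{S(\tangent{\cM})}$ satisfying $\phi\circ D^{\nabla}=D^{\nabla'}\circ\phi$, since both sides equal $(\pbw^{\nabla'})\inv\UdifferentialM\pbw^{\nabla}$. A coalgebra isomorphism intertwining two codifferentials is an isomorphism of the associated $L_\infty[1]$ algebras; moreover both PBW maps restrict to the identity on associated graded, so the linear component of $\phi$ is the identity, which makes the isomorphism canonical, depending only on the pair $(\nabla,\nabla')$. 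This would complete the proof.
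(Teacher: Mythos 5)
Your proposal is correct and follows essentially the same route as the paper: conjugating $\UdifferentialM$ by $\pbw^{\nabla}$ to get a square-zero coderivation $\delta^{\nabla}$ on $\sections{S(\tangent{\cM})}$, observing that $\delta^{\nabla}-\liederivative{Q}$ is an $\cR$-linear coderivation whose corestriction components give the tensors $R_k$ (Lemma~\ref{coderivation}), identifying $R_2=-\atiyahcocycleQ$ from the leading failure of $\pbw^{\nabla}$ to intertwine the differentials (Proposition~\ref{formula1}), and obtaining connection-independence from the coalgebra automorphism $(\pbw^{\nabla'})\inv\circ\pbw^{\nabla}$ with identity linear part. The only cosmetic difference is that you deduce $R_0=R_1=0$ from the filtration/associated-graded argument rather than from the paper's direct computation in Lemma~\ref{lem:1-3}.
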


The $L_{\infty}[1]$ algebras arising in this way are called
the \bfemph{Kapranov $L_{\infty}[1]$ algebras} of the dg manifold.
Our proof of Theorem~\ref{cor:main1} is very much inspired
by Kapranov's construction \cite[Theorem~2.8.2]{MR1671737}.
Essentially, we endow $\sections{S(\tangent{\cM})}$
with a dg coalgebra structure over $(\smooth{\cM},Q)$
using the PBW map \eqref{defnpbw1} and the dg coalgebra
$\big(\cD(\cM),\UdifferentialM\big)$, whose dual dg algebra
can be considered as a kind of ``the algebra of functions'' on the ``formal neighborhood"
of the diagonal $\Delta$ of the product dg manifold
$\big(\cM\times\cM,(Q,Q)\big)$.
By construction, $\pbw^\nabla$ is a formal exponential map identifying
a `formal neighborhood' of the zero section of $T_\cM$
to a `formal neighborhood' of the diagonal $\Delta$ of the product manifold
$\cM\times\cM$. The dg coalgebra structure on $\cD(\cM)$ associated with
the homological vector field $(Q,Q)$ on $\cM\times\cM$ can be pulled back
through this formal exponential map so as to obtain a dg coalgebra
$(S\big(\XX(M)\big),\delta^{\nabla})$, which in turn induces
an $L_{\infty}[1]$ algebra on $\XX(\cM)$.

The Kapranov $L_{\infty}[1]$ algebra of a dg manifold
as in Theorem~\ref{cor:main1} is completely determined
by the Atiyah $1$-cocycle and the sections
\[ R_{k}\in\sections{S^{k}(\tangent{\cM}^{\vee})\otimes\tangent{\cM}}
\cong\sections{\Hom(S^{k}(\tangent{\cM}),\tangent{\cM})} \]
for $k\geq 3$.
It is thus natural to wonder whether the $R_k$'s can be described explicitly.

For the $L_\infty[1]$ algebra structure on the Dolbeault complex
$(\OO^{0,\bullet}(T\oz_X),\overline{\partial})$ associated with
the Atiyah class of the holomorphic tangent bundle $T_X$ of a Kähler manifold $X$,
Kapranov showed that the multibrackets can be described explicitly
by a very simple formula \cite{MR1671737}: Equation~\eqref{eq:Rk1} below.
Consider the $\CC$-linear extension of the Levi-Civita connection
of the K\"ahler manifold $X$; this is a $T_X^\CC$-connection $\nabla$ on $T_X^\CC$.
Since $X$ is Kähler, $\nabla$ induces a $T_X^\CC$-connection on $T\oz_X$,
also denoted by $\nabla$, which decomposes as the sum
$\nabla=\nabla^{\bar{\partial}}+\nabla\oz$ of the canonical flat $T\zo_X$-connection
$\nabla^{\bar{\partial}}$ on $T\oz_X$ and some
$T\oz_X$-connection $\nabla\oz$ on $T\oz_X$.
Since $\nabla\oz$ is torsion-free and $d^{\nabla\oz} \circ d^{\nabla\oz}=0
\in \Omega^{2, 0}(\End T^{1, 0})$, the curvature
of $\nabla$ is $R^{\nabla}=[d^{\nabla^{\overline{\partial}}},
d^{\nabla\oz}]$,
which
equals to $R_2\in \Omega\zo\big(S^2{(T_X\oz)}\dual\otimes T\oz_X\big)$,
the Dolbeault representative of the Atiyah 1-cocycle of
the holomorphic tangent bundle $T_X$.
Kapranov proved \cite[Theorem~2.6]{MR1671737} that, for $k\geqslant 2$,
the $k$-th multibracket $\lambda_k$ on the Dolbeault complex
$(\OO^{0,\bullet}(T\oz_X),\overline{\partial})$
is the composition of the wedge product
\[ \OO^{0,j_1}(T\oz_X)\otimes\cdots\otimes\OO^{0,j_k}(T\oz_X)
\to\OO^{0,j_1+\cdots+j_k}\big((T\oz_X)^{\otimes k}\big) \]
with the map
\[ \OO^{0,j_1+\cdots+j_k}\big((T\oz_X)^{\otimes k}\big)
\to\OO^{0,j_1+\cdots+j_k+1}(T\oz_X) \]
induced by
\[ R_k\in\Omega\zo\big(S^k{(T\oz_X)}\dual\otimes T\oz_X\big)
\subset\Omega\zo\big(\Hom\big((T\oz_X)^{\otimes k},T\oz_X\big)\big) ,\]
and that, for $k\geqslant 3$,
\begin{equation}\label{eq:Rk1}
R_{k}=d^{\nabla\oz}R_{k-1}
\quad\in\Omega\zo\big(S^{k}{(T\oz_X)}\dual\otimes T\oz_X\big).
\end{equation}
If $X$ is a mere complex manifold rather than a K\"ahler manifold,
the relation between the $R_k$'s is more complicated:
it involves the Atiyah 1-cocycle $R_2$, the curvature of $\nabla\oz$,
and their higher covariant derivatives.
Nevertheless, recursive computations are still possible
as shown in~\cite{MR4271478}.

In the present paper, we prove that a similar characterization
of the higher multibrackets holds for the Kapranov $L_\infty[1]$ algebra
of a \emph{dg} manifold:

\begin{introthm}[Theorem~\ref{theorem2}]
\strut
\begin{enumerate}
\item The sections $R_{n}\in\sections{S^n(T^\vee_\cM)\otimes T_\cM}$,
with $n\geq 3$, are completely determined, by way of a recursive formula,
by the Atiyah cocycle $\atiyahcocycleQ$, the curvature $\curvature$,
and their higher covariant derivatives --- see~\eqref{Rn}.
\item In particular, if $\curvature=0$, then $R_{2}=-\atiyahcocycleQ$
and $R_{n}=\frac{1}{n}\widetilde{\coder}R_{n-1}$, for all $n\geq 3$.
\end{enumerate}
\end{introthm}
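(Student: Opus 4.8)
The plan is to extract the sections $R_n$ directly from the transferred codifferential underlying Theorem~\ref{cor:main1}, and then to turn the defining recursion of the PBW map into a recursion for the $R_n$. Recall that the $L_\infty[1]$ structure is encoded by the coderivation $\delta^\nabla$ on $\sections{S(\tangent{\cM})}$ obtained by transporting the differential $\UdifferentialM$ through the PBW isomorphism, i.e.\ $\pbw^\nabla\circ\delta^\nabla=\UdifferentialM\circ\pbw^\nabla$. Being a coderivation of a cofree coalgebra over $(\smooth{\cM},Q)$, the map $\delta^\nabla$ is determined by its corestriction $\pr_1\circ\delta^\nabla$ onto $\sections{\tangent{\cM}}$; its restriction $\pr_1\circ\delta^\nabla\colon\sections{S^n(\tangent{\cM})}\to\sections{\tangent{\cM}}$ is the $n$-th structure map, which equals $\SdifferentialM$ for $n=1$ and, for $n\geq 2$, is the $\smooth{\cM}$-linear map induced by $R_n$ (with $R_2=-\atiyahcocycleQ$). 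The entire problem therefore reduces to computing these corestrictions recursively in $n$.

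The two inputs I would combine are: (i) the recursive characterization of the PBW map from \cite{MR2989383,arXiv:1408.2903,MR3910470}, in polarized form $\pbw^\nabla(X^{\odot(n+1)})=X\cdot\pbw^\nabla(X^{\odot n})-\pbw^\nabla\big(\nabla_X(X^{\odot n})\big)$, where $\nabla_X(X^{\odot n})=n\,(\nabla_XX)\odot X^{\odot(n-1)}$; and (ii) the description of the differential on differential operators as the graded commutator $\UdifferentialM(D)=[Q,D]$, which is a graded derivation of composition, so that $\UdifferentialM(X\cdot u)=(\SdifferentialM X)\cdot u+(-1)^{\degree{X}}X\cdot\UdifferentialM(u)$. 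First I would apply $\pbw^\nabla\circ\delta^\nabla=\UdifferentialM\circ\pbw^\nabla$ to a polarized element $X^{\odot n}$ and use (i) to peel off a single factor $X$ on each side. On the right, (ii) together with the inductive identity $\UdifferentialM(\pbw^\nabla(X^{\odot(n-1)}))=\pbw^\nabla(\delta^\nabla(X^{\odot(n-1)}))$ rewrites everything in terms of lower symmetric powers; on the left, (i) unwinds $\pbw^\nabla(\delta^\nabla(X^{\odot n}))$. Matching the two expansions and projecting onto the order-$\leq 1$ part isolates $R_n$ in terms of $R_{n-1},\dots,R_2$ and the connection.

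The covariant derivative and the curvature enter exactly as the defects of the noncommuting operations produced by this matching. Whenever a factor $\nabla_X$ must be commuted past $Q$, the obstruction is the Atiyah cocycle, by the very definition $[Q,\nabla_XY]-\nabla_{[Q,X]}Y-(-1)^{\degree{X}}\nabla_X[Q,Y]=\atiyahcocycleQ(X,Y)$; and whenever two covariant derivatives are interchanged, the obstruction is $\curvature$. Tracking these two sources of noncommutativity, together with the symmetrization combinatorics of the symmetric algebra (which generate the binomial and rational coefficients), produces the asserted recursion writing $R_n$ through $\atiyahcocycleQ$, $\curvature$, and their iterated covariant derivatives $\coder$. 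I expect the main obstacle to be precisely this bookkeeping: controlling the Koszul signs and the combinatorial coefficients, and above all verifying that every term of operator-order $\geq 2$ cancels, so that $R_n$ is genuinely a section of $S^n(\tangent{\cM}^\vee)\otimes\tangent{\cM}$ rather than a higher-order differential operator. Establishing this drop in order is the heart of the argument, as it is what makes the higher brackets tensorial.

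For part (2) I would specialize the recursion to a flat connection. When $\curvature=0$ every curvature defect drops out, and the only surviving operations are the Atiyah-cocycle contribution carried by $R_{n-1}$ and a single bare symmetrized covariant differentiation of it. The recursion then collapses to $R_2=-\atiyahcocycleQ$ and $R_n=\tfrac1n\,\widetilde{\coder}R_{n-1}$ for $n\geq 3$, the operator $\widetilde{\coder}$ being the symmetrized covariant differential $\sections{S^{n-1}(\tangent{\cM}^\vee)\otimes\tangent{\cM}}\to\sections{S^n(\tangent{\cM}^\vee)\otimes\tangent{\cM}}$ and the factor $\tfrac1n$ the normalization from symmetrizing the $n$ covariant indices. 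This is the exact dg analogue of Kapranov's Kähler formula~\eqref{eq:Rk1}.
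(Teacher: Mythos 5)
Your overall strategy --- transport $\UdifferentialM$ through $\pbw^\nabla$, read off $R_n$ as the corestriction of the resulting coderivation onto $\sections{\tangent{\cM}}$, and turn the defining recursion of the PBW map into a recursion for the $R_n$ --- is the paper's strategy (Proposition~\ref{formula1} combined with Equation~\eqref{Cnabla}), and your identification of the Atiyah cocycle as the defect of commuting $\nabla_X$ past $Q$ is exactly how it enters there (Lemma~\ref{lem:4}). The genuine gap is at the point where you claim the curvature enters ``whenever two covariant derivatives are interchanged.'' The term that actually survives the matching is not a commutator of covariant derivatives: it is the discrepancy
\[
\Bmap(Y;\boldX)=(\pbw^{\nabla})^{-1}\big(Y\cdot\pbw^{\nabla}(\boldX)\big)-\nabla_{Y}\boldX ,
\]
evaluated at $Y=R_k(\boldX_{(1)})$, i.e.\ the failure of left multiplication by a vector field, transported through $\pbw^\nabla$, to agree with covariant differentiation. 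That this is a bundle map, and that it equals $Y\odot\boldX$ minus a correction completely determined by $\curvature$ and its higher covariant derivatives, is the nontrivial content of Lemma~\ref{mainlemma}; the paper proves it via the Fedosov-type decomposition $d^{\nabla^{\lightning}}=d^{\nabla}-\delta+\widetilde{\Amap}$ of Theorem~\ref{LStheorem}, with $\Amap_2=\frakh\circ\curvature$ and the recursion of Corollary~\ref{cor:An}. Your proposal contains no mechanism for establishing this, so part~(1) --- that $R_n$ depends only on $\atiyahcocycleQ$, $\curvature$ and their covariant derivatives, rather than on the connection in some unspecified way --- is not actually reached. Part~(2) likewise needs the precise implication that $\curvature=0$ forces $\Bmap(Y;\boldX)=Y\odot\boldX$, which you assert but do not derive.

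A secondary point: you locate the ``heart of the argument'' in showing that the operator-order $\geq 2$ terms cancel so that $R_n$ is tensorial. In the paper this is free: $(\pbw^{\nabla})^{-1}\circ\UdifferentialM\circ\pbw^{\nabla}-\SdifferentialM$ is an $\cR$-linear coderivation of the cofree coalgebra $\sections{S(\tangent{\cM})}$ (Theorem~\ref{RSPtheorem} via Lemma~\ref{coderivation}), so each Taylor coefficient is automatically a section of $S^n(\tangent{\cM}^{\vee})\otimes\tangent{\cM}$ --- a fact you yourself invoke in your first paragraph. The real difficulty is the one isolated above, not the order drop.
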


Finally, we investigate the Kapranov $L_{\infty}[1]$ algebras
arising from two classes of examples of dg manifolds:
those corresponding to finite dimensional $L_{\infty}[1]$ algebras
as described in Example~\ref{example-one},
and those corresponding to manifolds endowed with integrable distributions,
which include not only foliated manifolds but also complex manifolds
as described in Example~\ref{example-two}.
For the dg manifold $(\frakg[1], d_{\CE})$ associated
with a finite-dimensional $L_\infty[1]$ algebra $\frakg[1]$,
we prove that the multibrackets of the Kapranov $L_{\infty}[1]$ algebra
structure on $\XX(\frakg[1])\cong\Hom\big(S(\frakg[1]),\frakg[1]\big)$
can be expressed in terms of the multibrackets of the $L_{\infty}[1]$ algebra
$\frakg[1]$ --- see Proposition~\ref{thm:Linfty}.
We also compute the Atiyah class of the dg manifold $(\frakg[1],d_{\CE})$
in terms of Chevalley--Eilenberg cohomology of $\frakg[1]$
with values in the tensor product of adjoint and coadjoint modules
$(\frakg[1])^\vee\otimes(\frakg[1])^\vee\otimes\frakg[1]$
--- see Proposition~\ref{pro:Atiyah}.
For the dg manifold $(F[1],d_F)$ arising from
an integrable distribution $F\subseteq\TkM$ on a smooth manifold $M$,
we show that the Kapranov $L_\infty[1]$ algebra structure on $\XX(F[1])$
is quasi-isomorphic to the $L_\infty[1]$ algebra $\OmegaF^\bullet(\TkM/F)$
arising from the Lie pair $(\TkM,F)$, which was studied extensively
in~\cite{MR2989383,MR4271478,MR3877426}. In particular,
for the dg manifold $(T^{0,1}_X[1],\bar{\partial})$
associated with a complex manifold $X$,
the Kapranov $L_\infty[1]$ algebra structure on $\XX(T_X^{0,1}[1])$
is quasi-isomorphic to the $L_\infty[1]$ algebra structure on the
Dolbeault complex $(\OO^{0,\bullet}(T\oz_X),\overline{\partial})$
associated with the Atiyah class of the holomorphic tangent bundle $T_X$.
Moreover, each map $\phi_k$ in the quasi-isomorphism $\{\phi_k\}_{k\geq 1}$ 
is $\OO^{0,\bullet}_X$-multilinear --- see Corollary~\ref{cor:NYC}.

Note that Bandiera \cite{MR3579974,MR3622306} proved that, when $X$ is a K\"ahler manifold,
the Kapranov $L_\infty[1]$ algebra structure on $\OO^{0,\bullet}(T^{1,0}_X)$ 
is homotopy abelian \emph{over the field $\CC$}.
It would be interesting to investigate if 
the $L_\infty[1]$ algebra structure of Theorem~\ref{cor:main1} on the space 
$\XX(\cM)$ of vector fields over a dg manifold $(\cM,Q)$
is homotopy abelian \emph{over the field $\KK$}, possibly 
by extending the techniques developed in~\cite{MR3579974,MR3622306}.

\bigskip
\textbf{Notations and conventions.}
Throughout this paper, the symbol $\KK$ denotes a field either $\RR$ or $\CC$.

We reserve the symbol $M$ to denote a smooth manifold (over $\KK$) exclusively.
The sheaf of smooth $\KK$-valued functions on $M$
is denoted $\cO_{M}=\cO_{M}^{\KK}$.
The algebra of globally defined smooth functions on $M$
is $\smooth{M}=\cO_{M}(M)$.

A $(p,q)$-shuffle is a permutation $\sigma$ of the set $\{1,2,\cdots,p+q\}$
such that $\sigma(1)<\cdots<\sigma(p)$ and $\sigma(p+1)<\cdots<\sigma(p+q)$.
The set of $(p,q)$-shuffles will be denoted by $\shuffle{p}{q}$.

We use Sweedler's (sumless) notation for the comultiplication $\Delta$
in any coalgebra $C$:
\[ \Delta(c)=\sum_{(c)}c_{(1)}\otimes c_{(2)}=c_{(1)}\otimes c_{(2)}
,\quad\forall c\in C. \]

All gradings in this paper are $\ZZ$-gradings and $\cM$ will always be
a finite dimensional graded manifold. Throughout the paper,
`dg' means `differential graded.'

Given a graded vector space $V$, the suspension of $V$ is denoted
by $V[1]$ satisfying $V[1]^{n}=V^{n+1}$. We denote the (internal) degree
of an element $v\in V$ by $\degree{v}$.

Many equations throughout the paper have the following general shape:
\begin{equation}\label{stdform}
A(X_1,X_2,\dots,X_n)=(-1)^{\sum_{(i,j)\in\sK}\degree{X_{\sigma(i)}}
\degree{X_{\sigma(j)}}}B(X_{\sigma(1)},X_{\sigma(2)},\dots,X_{\sigma(n)})
,\end{equation}
where $X_1,X_2,\dots,X_n$ is a finite collection of $\ZZ$-graded objects;
$\sigma$ is a permutation of the set of indices $\{1,2,\dots,n\}$;
$\sK$ is the set of couples $(i,j)$ of elements of $\{1,2\dots,n\}$
such that $i<j$ and $\sigma(i)>\sigma(j)$;
and $A$ and $B$ are $n$-ary operations on the $\ZZ$-graded objects
$X_1,X_2,\dots,X_n$ whose output is an object of degree
$\degree{X_1}+\degree{X_2}+\cdots+\degree{X_n}$.
The factor
$(-1)^{\sum_{(i,j)\in\sK}\degree{X_{\sigma(i)}}\degree{X_{\sigma(j)}}}$
appearing in the right hand side of~\eqref{stdform}
is called the \emph{Koszul sign} of the permutation $\sigma$ of the graded objects
$X_1,X_2,\dots,X_n$. It will customarily be abbreviated as $\sign$
since its actual value --- either $+1$ or $-1$ --- can be recovered
from a careful inspection of both sides of the equation.
We will also use the more explicit abbreviation $\sign(X_1,X_2,\cdots,X_n)$
if the collection of $\ZZ$-graded objects begin permuted
is not immediately clear.
As explained by Boardman in~\cite{MR209338},
this sign is mostly inconsequential and it is not necessary
to devote much attention or thought to it.
In fact, the right hand side of \eqref{stdform} can be a sum of several terms
so it would be more correct to say that the general shape of the equations is
\[ A(X_1,X_2,\dots,X_n)=\sum_k (-1)^{\sum_{(i,j)\in\sK_k}
\degree{X_{\sigma_k(i)}}\degree{X_{\sigma_k(j)}}}
B_k(X_{\sigma_k(1)},X_{\sigma_k(2)},\dots,X_{\sigma_k(n)}). \]

\section{Preliminaries}

\subsection{dg manifolds}
Let $M$ be a smooth manifold over $\KK$, and $\cO_{M}$ be the sheaf
of $\KK$-valued smooth functions over $M$. A \bfemph{graded manifold} $\cM$
with support $M$ consists of a sheaf $\cA$ of graded commutative
$\cO_{M}$-algebra on $M$ such that there is a $\ZZ$-graded vector space $V$ satisfying
\[\cA(U)\cong \cO_{M}(U)\otimes_{\KK} \Hom_{\KK}(S(V),\KK)
\cong \cO_{M}(U)\otimes_{\KK} \widehat{S}(V^{\vee})\]
for sufficiently small open set $U \subset M$.
The global section of the sheaf $\cA$ will be denoted by $\smooth{\cM}=\cA(M)$.
We say a graded manifold $\cM$ is finite dimensional if $\dim M<\infty$ and $\dim V <\infty$. 
Throughout this paper, graded manifold $\cM$ will always be finite dimensional.

\begin{remark}
In the literature, the sheaf of functions $\cA$ over a graded manifold 
is defined by $\cA(U)=\cO_M(U)\otimes_{\KK}S(V^\vee)$ for sufficiently 
small open subsets $U$ of $M$.
Here, however, we allow for formal power series rather than polynomials:
$\cA(U)=\cO_M(U)\otimes_{\KK}\widehat{S}(V^\vee)$ for sufficiently 
small open subsets $U$ of $M$.
Consequently, when we write `dg manifold' $(\cM,Q)$, we actually mean 
formal dg manifold in Kontsevich's sense \cite[Section~4.1]{MR2062626}.
\end{remark}

By $\cI_{\cA}$, we denote
the sheaf of ideal of $\cA$ consisting of functions vanishing
at the support $M$ of $\cM$. That is,
for sufficiently small $U\subset M$,
\[ \cI_{\cA} (U)\cong \cO_{M}(U)\otimes_{\KK}\widehat{S}^{\geq 1}(V^{\vee}). \]

Given graded manifolds $\cM=(M,\cA)$ and $\cN=(N,\cB)$,
a \bfemph{morphism} $\cM\to\cN$ of graded manifolds consists of
a pair $(f,\psi)$, where $f:M\to N$ is a morphism of smooth manifolds
and $\psi:f^{*}\cB\to\cA$ is
a morphism of sheaves of graded commutative $\cO_{M}$-algebras such that
$\psi(f^{*}\cI_{\cB})\subset\cI_{\cA}$. We often
use the notation $\phi: \cM \to \cN$ to denote such a
morphism. Then $\psi=\phi^{*}$. Also, we write
$\phi^{*}:\smooth{\cN}\to\smooth{\cM}$ to denote the morphism
on global sections.
Note that the condition $\psi(f^{*}\cI_{\cB})\subset\cI_{\cA}$
is equivalent to $\psi$ being continuous w.r.t\ the $\cI$-adic topology.

Vector bundles in the category of graded manifolds are called
\bfemph{graded vector bundles}.
Given a graded vector bundle $\Phi:\cE\to\cM$, a \bfemph{section}
$s:\cM\to\cE$ of $\cE$ over $\cM$ is a morphism of graded manifolds
such that $\Phi\circ s=\id_{\cM}$. We write the $\smooth{\cM}$-module
of sections of $\cE$ over $\cM$ by the usual notation
$\Gamma(\cE)=\Gamma(\cM;\cE)$.

For a graded manifold $\cM$ with support $M$, its tangent bundle
$\tangent{\cM}$ is a graded manifold with support $T_M$
and is a graded vector bundle over $\cM$.
Its sections are called \bfemph{vector fields} on $\cM$ and
the space of vector fields $\sections{\cM;\tangent{\cM}}=\sections{\tangent{\cM}}$
can be identified with that of graded derivations of
$C^\infty (\cM)$.
We also write $\sections{\cM;\tangent{\cM}}=\XX(\cM)$.
Observe that $\XX(\cM)$ admits a Lie algebra 
structure, whose Lie bracket coincides with the graded commutator
\[[X,Y]=X\circ Y -(-1)^{\degree{X}\cdot \degree{Y}}Y\circ X\]
for homogeneous elements $X,Y\in\XX(\cM)$
regarded as derivations of $\smooth{\cM}$.
Indeed $\tangent{\cM}$ is a graded Lie algebroid \cite{MR2534186}.

A \bfemph{differential graded manifold} 
(dg manifold in short) is a graded manifold 
$\cM$ together with a homological vector field,
i.e.\ a vector field $Q\in\XX(\cM)$ of degree 
$+1$ satisfying $[Q,Q]= Q\circ Q+ Q\circ Q=0$.
For a dg manifold $(\cM, Q)$, its tangent bundle $\tangent{\cM}$ is
naturally a dg manifold, with the homological vector field being
the complete lift\footnote{It is also called tangent lift in the literature 
\cite{MR3319134,MR3754617}.} of $Q$, and in fact $\tangent{\cM}$ is a dg Lie algebroid
over $\cM$ \cite{MR2534186,MR3319134}.

\begin{example}\label{example-one}
Let $\frakg$ be a finite dimensional Lie algebra.
Then $(\frakg[1], d_{\CE})$ is a dg manifold --- 
its algebra of functions is $C^\infty(\frakg[1])\cong \Lambda^\bullet\frakg^\vee$
and its homological vector field $Q$ is the Chevalley--Eilenberg differential $d_{\CE}$.

This construction admits an `up to homotopy' version:
Given a $\ZZ$-graded finite dimensional vector space $\frakg=\bigoplus_{i\in\ZZ}\frakg_i$,
the graded manifold $\frakg[1]$ is a dg manifold, i.e.\ admits a homological vector field,
if and only if $\frakg$ admits a structure of curved $L_\infty$ algebra.
\end{example}

\begin{example}\label{example-two}
Let $M$ be a smooth manifold. Then $(T_M[1],d_{\dR})$ is a dg manifold ---
its algebra of functions is $C^\infty(T_M[1])\cong\Omega^\bullet(M)$
and its homological vector field $Q$ is the de Rham differential $d_{\dR}$.
Likewise, a complex manifold $X$ gives rise to a dg manifold 
$(T^{0,1}_X[1],\bar{\partial})$
whose algebra of functions $C^\infty(T^{0,1}_X[1])$ is $\Omega^{0,\bullet}(X)$
and whose homological vector field $Q$ is the Dolbeault operator $\bar{\partial}$.
\end{example}

\begin{example}\label{example-three}
Let $s$ be a smooth section of a vector bundle $E\to M$.
Then $(E[-1],\iota_s)$ is a dg manifold --- 
its algebra of functions is
$C^\infty(E[-1])\cong\sections{\Lambda^{-\bullet}E\dual}$
and its homological vector field is $Q=\iota_s$, the interior product with $s$.
This dg manifold can be thought of
as a smooth model for the (possibly singular) intersection of $s$
with the zero section of the vector bundle $E$,
and is often called a `derived intersection', or
a \emph{quasi-smooth derived manifold} \cite{arXiv:2006.01376}.
\end{example}

Both situations in Example~\ref{example-two} are special instances
of Lie algebroids, while Example~\ref{example-three} is a special
case of derived manifolds \cite{arXiv:2006.01376}.

\subsection{Atiyah class}
Let $\cM$ be a graded manifold and $\cE$ be
a graded vector bundle over $\cM$. We say a $\KK$-linear map
\[\nabla:\XX (\cM)\otimes_{\KK} \sections{\cE} \to \sections{\cE}\]
of degree $0$ is a \bfemph{linear connection} on $\cE$ over $\cM$ if the following
axioms are satisfied:
\begin{enumerate}
\item $\smooth{\cM}$-linear in the first argument: $\nabla_{fX}s=f \nabla_X s$.
\item $\nabla_X$ is a derivative in the second argument: $\nabla_X (f s)
= X(f) s+(-1)^{\degree f\cdot\degree X}f \nabla_X s$,
\end{enumerate}
where $f\in\smooth{\cM}$ and $X\in\XX(\cM)$ are homogeneous elements, and $s\in \sections{\cE}$.

The \bfemph{covariant derivative} associated to a
linear connection $\nabla$ is the $\KK$-linear map
\[\coder: \sections{\Lambda^p \tangent{\cM}^{\vee}\otimes \cE} \to
\sections{\Lambda^{p+1} \tangent{\cM}^{\vee}\otimes \cE}\]
of (internal) degree $0$, defined by
\begin{align*}
\left( \coder\omega\right) (X_1\wedge \cdots \wedge X_{p+1})
&=\sum_{i=1}^{p+1} (-1)^{i+1}\sign\cdot \nabla_{X_{i}}
\big(\omega(X_1\wedge \cdots \wedge\widehat{X}_i\wedge\cdots \wedge X_{p+1})\big)\\
&\quad +\sum_{i<j} (-1)^{i+j}\sign\cdot \omega([X_i,X_j]\wedge X_1\wedge
\cdots\wedge\widehat{X}_i\wedge\cdots\wedge\widehat{X}_j\wedge\cdots \wedge X_{p+1}),
\end{align*}
for all homogeneous
$\omega\in\sections{\Lambda^p\tangent{\cM}^{\vee}\otimes\cE}$
and $X_1,\cdots,X_{p+1}\in\XX(\cM)$.
The symbol $\sign=\sign(\omega,X_{1},\cdots,X_{p+1})$
denotes the Koszul signs arising from the reordering of the homogeneous
objects $\omega,X_{1},\cdots,X_{p+1}$ in each term of the right hand side.

We say $\nabla$ is an \bfemph{affine connection} on $\cM$
if it is a linear connection on $\tangent{\cM}$ over $\cM$.
Given an affine connection $\nabla$ on $\cM$,
the $(1,2)$-tensor $T^{\nabla}\in\sections{\tangent{\cM}^{\vee}
\otimes\tangent{\cM}^{\vee}\otimes \tangent{\cM}}$ of degree 0, defined by
\[T^{\nabla}(X,Y)=\nabla_X Y -(-1)^{\degree X \cdot \degree Y} \nabla_Y X - [X,Y] \]
for any homogeneous vector fields $X,Y \in \XX(\cM)$, is called the \bfemph{torsion} of $\nabla$.
We say an affine connection $\nabla$ is \bfemph{torsion-free} if
$T^{\nabla}= 0$.
It is well known that affine torsion-free connections
always exist \cite{MR3910470}.

The \bfemph{curvature} of an affine connection $\nabla$ is
the $(1,3)$-tensor $R^{\nabla}\in
\Omega^2\left({\cM},\End(\tangent{\cM})\right)$ of degree 0, defined by
\[R^{\nabla}(X,Y)Z = \nabla_X \nabla_Y Z -(-1)^{\degree X \cdot \degree Y} 
\nabla_Y \nabla_X Z - \nabla_{[X,Y]} Z\]
for any homogeneous vector fields $X,Y,Z\in\XX(\cM)$.
If the curvature $\curvature$ vanishes identically,
the affine connection $\nabla$ is called \bfemph{flat}.

Let $(\cM,Q)$ be a dg manifold.
We define an operator $\cQ$ of degree +1 on the graded $\smooth{\cM}$-module
$\sections{\cM;\tangent{\cM}^{\vee}\otimes\End(\tangent{\cM})}$:
\begin{equation}\label{eq:cQ}
\cQ:\sections{\cM;\tangent{\cM}^{\vee}\otimes\End(\tangent{\cM})}^{\bullet}
\to\sections{\cM;\tangent{\cM}^{\vee}\otimes\End(\tangent{\cM})}^{\bullet+1}
\end{equation}
by the Lie derivative along the homological vector field $Q$:
\[ (\cQ F)(X,Y)=[Q,F(X,Y)]-(-1)^{k}F([Q,X],Y)-(-1)^{k+\degree X}F(X,[Q,Y]) \]
for any section
$F\in\sections{\cM;\tangent{\cM}^{\vee}\otimes\End(\tangent{\cM})}^{k}$
of degree $k$ and homogeneous vector fields
$X,Y\in\XX(\cM)$. One can easily check that $\cQ^{2}=0$. Therefore
\[ \big(\sections{\cM;\tangent{\cM}^{\vee}\otimes\End(\tangent{\cM})}^{\bullet},
\cQ\big) \]
is a cochain complex.

Now given an affine connection $\nabla$, consider the
$(1,2)$-tensor $\atiyahcocycleQ\in\sections{\cM;\tangent{\cM}^{\vee}
\otimes\End(\tangent{\cM})}$ of degree +1, defined by
\[\atiyahcocycleQ (X,Y) = [Q, \nabla_{X}Y] - \nabla_{[Q,X]}Y
-(-1)^{\degree X} \nabla_{X}[Q,Y] \]
for any homogeneous vector fields $X,Y\in \XX(\cM)$.

\begin{proposition}[\cite{MR3319134}]\label{Atiyahproperty}
In the above setting, the following statements hold.
\begin{enumerate}
\item If the affine connection $\nabla$ on $\cM$ is torsion-free, then
$\atiyahcocycleQ\in\sections{\cM;S^{2}(\tangent{\cM}^{\vee})\otimes\tangent{\cM}}$.
In other words,
\[ \atiyahcocycleQ(X,Y)=(-1)^{\degree X\cdot\degree Y}\atiyahcocycleQ(Y,X) .\]
\item The element $\atiyahcocycleQ\in\sections{\cM;\tangent{\cM}^{\vee}
\otimes\End(\tangent{\cM})}^{1}$ is a $1$-cocycle.
\item The cohomology class $[\atiyahcocycleQ]$ does not depend on the choice of connection.
\end{enumerate}
\end{proposition}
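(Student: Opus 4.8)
The plan is to verify all three claims by direct computation from the defining formula for $\atiyahcocycleQ$, the only structural inputs being the graded Jacobi identity for the commutator bracket on $\XX(\cM)$ and the homological condition $[Q,Q]=0$ (which yields $[Q,[Q,Z]]=0$ for every $Z\in\XX(\cM)$).

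For part~(1), I would expand the difference $\atiyahcocycleQ(X,Y)-(-1)^{\degree X\degree Y}\atiyahcocycleQ(Y,X)$ and group the six resulting terms. The two terms $[Q,\nabla_X Y]$ and $-(-1)^{\degree X\degree Y}[Q,\nabla_Y X]$ combine, by linearity of $[Q,\argument]$, into $[Q,\nabla_X Y-(-1)^{\degree X\degree Y}\nabla_Y X]$, which the torsion-free hypothesis rewrites as $[Q,[X,Y]]$. Applying $T^{\nabla}=0$ to the pairs $([Q,X],Y)$ and $(X,[Q,Y])$ reassembles the remaining four terms into $-[[Q,X],Y]-(-1)^{\degree X}[X,[Q,Y]]$, so the whole difference equals $[Q,[X,Y]]-[[Q,X],Y]-(-1)^{\degree X}[X,[Q,Y]]$, which vanishes by the graded Jacobi identity since $\degree Q=1$. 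The only delicate point is that the torsion expressions appear with the correct coefficients; this rests on the parity identity $(-1)^{\degree X(\degree Y-1)}=(-1)^{\degree X(\degree Y+1)}$.

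For part~(2), I would expand $(\cQ\atiyahcocycleQ)(X,Y)$ using the degree-one case of the formula defining $\cQ$ and then substitute the definition of $\atiyahcocycleQ$ into each of its three terms. Every term carrying a double bracket $[Q,[Q,X]]$, $[Q,[Q,Y]]$, or $[Q,[Q,\nabla_X Y]]$ vanishes because $[Q,Q]=0$; what survives is three expressions, $[Q,\nabla_{[Q,X]}Y]$, $[Q,\nabla_X[Q,Y]]$, and $\nabla_{[Q,X]}[Q,Y]$, each occurring twice with opposite signs, so the total is zero. Conceptually this just records that $\atiyahcocycleQ$ is the Lie derivative $\liederivative{Q}$ of the connection $\nabla$ and that $\liederivative{Q}$ squares to zero; but since $\nabla$ is not itself a tensor, the cleanest rigorous route is the direct cancellation above.

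For part~(3), given two affine connections $\nabla$ and $\nabla'$, their difference $\Phi:=\nabla'-\nabla$ is an honest degree-zero tensor in $\sections{\cM;\tangent{\cM}^{\vee}\otimes\End(\tangent{\cM})}$ (the first-slot $\smooth{\cM}$-linearity being automatic and the Leibniz terms in the second slot cancelling). Subtracting the two defining formulas, the $[Q,\argument]$, $\nabla_{[Q,X]}$, and $\nabla_X[Q,\argument]$ pieces each reproduce the corresponding piece built from $\Phi$; comparing with the degree-zero case of the formula for $\cQ$ shows that $\atiyahcocycle{\nabla'}{(\cM,Q)}-\atiyahcocycleQ=\cQ\Phi$ exactly. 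Hence the two Atiyah cocycles are cohomologous and $[\atiyahcocycleQ]$ is independent of $\nabla$. The main obstacle throughout is the bookkeeping of Koszul signs; once $[Q,Q]=0$ and the graded Jacobi identity are in hand, no step presents a genuine conceptual difficulty.
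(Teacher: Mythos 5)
Your proposal is correct: all three computations go through, and I have checked the sign bookkeeping in each part (in particular, the reassembly of the four covariant-derivative terms into $-[[Q,X],Y]-(-1)^{\degree X}[X,[Q,Y]]$ in part (1), the pairwise cancellation of $[Q,\nabla_{[Q,X]}Y]$, $[Q,\nabla_X[Q,Y]]$ and $\nabla_{[Q,X]}[Q,Y]$ in part (2), and the identification of the difference of cocycles with $\cQ\Phi$ for the degree-zero tensor $\Phi=\nabla'-\nabla$ in part (3)). Note that the paper itself gives no proof of this proposition --- it is quoted from the cited reference --- so there is no in-text argument to compare against; your direct verification from the graded Jacobi identity and $[Q,Q]=0$ is exactly the standard one and fills that gap completely.
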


The element $\atiyahcocycleQ$ is called the \bfemph{Atiyah cocycle}
associated with the affine connection $\nabla$.
The cohomology class $\atiyahclassQ:=[\atiyahcocycleQ]\in\cohomology{1}
\big(\sections{\cM;\tangent{\cM}^{\vee}\otimes\End(\tangent{\cM})}^{\bullet},\cQ\big)$
is called the \bfemph{Atiyah class} of the dg manifold $(\cM,Q)$ \cite{MR3319134}.
See also \cite{arXiv:math/9812009} and \cite[Footnote~6]{MR2608525}.

\section{Formal exponential map of dg manifolds}	

\subsection{dg coalgebras}

\subsubsection{dg coalgebras}
Let $\cR$ be a graded commutative ring.
A \bfemph{graded coalgebra} $C$ over $\cR$ is a graded $\cR$-module
equipped with an $\cR$-linear map $\Delta:C\to C\otimes_{\cR}C$ of degree $0$
called comultiplication satisfying the following conditions:
\begin{enumerate}
\item (Coassociativity)
\[(\Delta\otimes \id_{C}) \circ \Delta= (\id_{C}\otimes \Delta)
\circ \Delta : C\to C\otimes_{\cR}C\otimes_{\cR}C.\]
\item (Counit) There is an $\cR$-linear map $\epsilon:C\to \cR$ of degree $0$ such that
\[(\epsilon\otimes \id) \circ \Delta= (\id \otimes \epsilon)\circ \Delta=\id_{C}.\]
\end{enumerate}

Let $\twist: C\otimes_{\cR} C \to C\otimes_{\cR} C$ be the
map defined by
\[ \twist(c_1\otimes c_2)=(-1)^{\degree{c_1}\cdot \degree{c_2}}c_2\otimes c_1, \]
for homogeneous elements $c_1, \ c_2\in C$. A graded coalgebra $C$ is called
\bfemph{cocommutative} if it satisfies $\Delta=\twist\circ \Delta$.

An $\cR$-linear map $\phi :C\to C$ satisfying
\[\Delta \circ \phi = (\id_{C}\otimes \phi+ \phi\otimes \id_{C})\circ \Delta\]
is called an \bfemph{$\cR$-coderivation} of the graded $\cR$-coalgebra $C$.
We denote the collection of all $\cR$-coderivations of $C$ by $\coDer_{\cR}(C)$.

Let $(\cR,d_{\cR})$ be a dg commutative ring,
and $(C,d_{C})$ be a dg $(\cR,d_{\cR})$-module. Then the map
\[ d_{C^{\otimes 2}} : C\otimes_{\cR}C \to C\otimes_{\cR}C \]
defined by
\[ d_{C^{\otimes 2}}(c_1\otimes c_2) = d_{C}(c_1)\otimes c_2 
+ (-1)^{\degree{c_1}}c_1\otimes d_{C}(c_2) \]
for homogeneous elements $c_1,c_2\in C$, is a well-defined degree $+1$ differential.
Such a differential is called the induced differential on $C\otimes_{\cR}C$.

\begin{definition}
Let $(\cR,d_{\cR})$ be a dg commutative ring.
A \bfemph{dg coalgebra} $(C,d_{C})$ over $(\cR,d_{\cR})$ is a dg
$(\cR,d_{\cR})$-module $(C,d_{C})$,
equipped with a graded coalgebra structure on
$C$ over $\cR$ where the comultiplication and the counit map respect the differentials. That is,
\begin{gather*}
\Delta \circ d_{C} = d_{C^{\otimes 2}}\circ \Delta ,\\
\epsilon \circ d_{C}=d_{\cR}\circ \epsilon
\end{gather*}
where $\Delta: C\to C\otimes_{\cR}C$ is the comultiplication
and $\epsilon:C\to \cR$ is the counit map.
\end{definition}

\subsubsection{dg coalgebras associated to dg manifolds}
Any dg manifold $(\cM,Q)$ determines a pair 
of dg coalgebras over the dg ring
$(\smooth{\cM},Q)$, namely 
$\cD(\cM)$ and $\sections{S(\tangent{\cM})}$.
Below we will briefly describe these dg coalgebra structures.
In the sequel, unless specified otherwise,
we will always identify $(\cR,d_{\cR})\cong(\smooth{\cM},Q)$.

First, let us consider the dg coalgebra structure on the 
left $\cR$-module $\cD(\cM)$ of differential operators on $\cM$.

The comultiplication
\begin{equation}\label{eq:1}
\Delta:\cD(\cM)\to\cD(\cM)\otimes_{\cR}\cD(\cM)
\end{equation}
is defined by
\[(\Delta D)(f\otimes g) = D(f\cdot g), \]
where $f,g\in\smooth{\cM}$ and $D\in\cD(\cM)$.

The differential $\UdifferentialM : \cD(\cM)\to \cD(\cM)$
is defined as the commutator with $Q$, which is also the Lie derivative
along the homological vector field $Q$:
\begin{equation}\label{eq:2}
\UdifferentialM(D)
=\llbracket Q,D\rrbracket
=Q\cdot D-(-1)^{\degree{D}}D\cdot Q
\end{equation}
for any $D\in\cD(\cM)$, where $\llbracket\argument,\argument\rrbracket$
denotes the commutator on $\cD(\cM)$.

The induced differential on
$\cD(\cM)\otimes_{\cR}\cD(\cM)$ is again the Lie derivative
$\UdifferentialM$, which coincides with
$\llbracket Q,\argument\rrbracket$, with $\llbracket\argument,\argument\rrbracket$
being the Gerstenhaber bracket on polydifferential operators
on $\cM$.

The counit map
\begin{equation}\label{eq:2-1}
\epsilon:\cD(\cM)\to \smooth{\cM}
\end{equation}
is the canonical projection, which evaluates a differential operator $D$
on the constant function $1$.

Note that $\cD(\cM)$ admits a natural ascending filtration by the order of differential operators
\[ \smooth{\cM}=\cD^{\leq 0}(\cM) \subset \cdots \subset \cD^{\leq n}(\cM)\subset \cdots\]
where $\cD^{\leq n}(\cM)$ denotes the space of differential operators of order $\leq n$.
The following proposition can be easily verified.

\begin{proposition}\label{pro:2.2}
For any dg manifold $(\cM, Q)$, the space of differential operators
$\cD(\cM)$ on $\cM$, equipped with the comultiplication
$\Delta$, the differential $\UdifferentialM$ and the counit $\epsilon$ as in~\eqref{eq:1},
\eqref{eq:2} and~\eqref{eq:2-1},
is a filtered dg cocommutative coalgebra over
$(\smooth{\cM},Q)$.
\end{proposition}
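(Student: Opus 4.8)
The plan is to separate the purely coalgebraic content from the compatibility with the homological vector field, and to reduce every verification to associativity, graded commutativity, and the Leibniz rule for $Q$.

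First I would check that $(\cD(\cM),\Delta,\epsilon)$ is a filtered cocommutative graded coalgebra over $\smooth{\cM}$, temporarily ignoring the differential. The crux is that $\Delta$ is well defined: writing $D\in\cD^{\leq n}(\cM)$ in local coordinates and applying the graded Leibniz rule shows that $(f,g)\mapsto D(f\cdot g)$ is a bidifferential operator of total order $\leq n$, hence an element of $\sum_{i+j=n}\cD^{\leq i}(\cM)\otimes_{\cR}\cD^{\leq j}(\cM)$ under the standard identification of bidifferential operators with $\cD(\cM)\otimes_{\cR}\cD(\cM)$. This single computation simultaneously yields the filtration compatibility $\Delta\big(\cD^{\leq n}(\cM)\big)\subseteq\sum_{i+j=n}\cD^{\leq i}(\cM)\otimes_{\cR}\cD^{\leq j}(\cM)$ and, since the product on $\smooth{\cM}$ is graded commutative, cocommutativity. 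Coassociativity is immediate because both $(\Delta\otimes\id)\circ\Delta(D)$ and $(\id\otimes\Delta)\circ\Delta(D)$ send $f\otimes g\otimes h$ to $D(f\cdot g\cdot h)$, and the counit axioms reduce to $D(1\cdot g)=D(g)=D(g\cdot 1)$. All of this is the graded analogue of the classical coalgebra structure underlying the PBW map \cite{MR2989383,MR3910470}, the only new ingredient being the tracking of Koszul signs as in \eqref{stdform}.

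Next I would verify the three conditions involving $Q$. That $\UdifferentialM=\gerstenhaber{Q}{\argument}$ is a degree $+1$ differential follows from the graded Jacobi identity: $\gerstenhaber{Q}{\gerstenhaber{Q}{D}}=Q^{2}D-DQ^{2}=0$, since $Q^{2}=\half\gerstenhaber{Q}{Q}=0$. For the compatibility $\Delta\circ\UdifferentialM=d_{C^{\otimes 2}}\circ\Delta$ I would argue directly on $f\otimes g$: by definition $(\Delta\,\UdifferentialM D)(f\otimes g)=(\UdifferentialM D)(f\cdot g)=Q\big(D(f\cdot g)\big)-(-1)^{\degree D}D\big(Q(f\cdot g)\big)$, and expanding $Q(f\cdot g)=Q(f)\cdot g+(-1)^{\degree f}f\cdot Q(g)$ together with the corresponding splitting of $D(f\cdot g)=(\Delta D)(f\otimes g)$ recovers exactly $\big(d_{C^{\otimes 2}}\Delta D\big)(f\otimes g)$, once the Koszul sign incurred by moving the degree-$+1$ operator $Q$ past the first tensor factor is accounted for. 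Conceptually this is the statement that $Q$ is a \emph{primitive} element, $\Delta Q=Q\otimes 1+1\otimes Q$, so that $\gerstenhaber{Q}{\argument}$ acts as the induced differential $d_{C^{\otimes 2}}$ on the tensor square; this is the viewpoint I would use to organize the signs. Finally, the counit is compatible with the differentials because $\epsilon(\UdifferentialM D)=(\UdifferentialM D)(1)=Q\big(D(1)\big)-(-1)^{\degree D}D\big(Q(1)\big)=Q\big(\epsilon(D)\big)$, using $Q(1)=0$, which is precisely $d_{\cR}\circ\epsilon$ with $d_{\cR}=Q$.

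It remains to check that the differential respects the order filtration, i.e.\ $\gerstenhaber{Q}{\cD^{\leq n}(\cM)}\subseteq\cD^{\leq n}(\cM)$. Both $Q\cdot D$ and $D\cdot Q$ lie in $\cD^{\leq n+1}(\cM)$, but their principal symbols are $\sigma(Q)\,\sigma(D)$ and, up to sign, $\sigma(D)\,\sigma(Q)$; since symbols are graded commutative these leading terms cancel in the commutator, so $\gerstenhaber{Q}{D}$ has order $\leq n$. I expect the only real difficulty to be notational rather than conceptual: every step rests on associativity, graded commutativity, and the Leibniz rule for $Q$, and the sole place where genuine care is required is the consistent tracking of Koszul signs in the bidifferential-operator identification defining $\Delta$ and in the passage between $\gerstenhaber{Q}{\argument}$ on $\cD(\cM)\otimes_{\cR}\cD(\cM)$ and the induced differential $d_{C^{\otimes 2}}$.
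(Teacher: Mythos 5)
Your proof is correct. The paper gives no argument for this proposition at all --- it is stated with the remark that it ``can be easily verified'' --- so your write-up simply supplies the routine verification the authors left to the reader, and it does so along the only natural route: the coalgebra axioms reduce to associativity and graded commutativity of the product on $\smooth{\cM}$, the compatibility of $\Delta$ and $\epsilon$ with $\UdifferentialM=\gerstenhaber{Q}{\argument}$ reduces to the Leibniz rule and $Q(1)=0$, and $(\UdifferentialM)^2=0$ follows from $Q^2=0$. The two places where something is actually being used --- the identification of $\Delta D$ with a bidifferential operator so that $\Delta$ lands in $\cD(\cM)\otimes_{\cR}\cD(\cM)$ and respects the order filtration, and the principal-symbol cancellation showing $\gerstenhaber{Q}{\cD^{\leq n}(\cM)}\subseteq\cD^{\leq n}(\cM)$ --- are exactly the right points to isolate, and both arguments carry over to the graded setting as you claim.
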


Next we describe the dg coalgebra structure on the left $\cR$-module $\sections{S(\tangent{\cM})}$.

The comultiplication
\begin{equation*}
\Delta:\sections{S(\tangent{\cM})}\to\sections{S(\tangent{\cM})}
\otimes_{\cR}\sections{S(\tangent{\cM})}
\end{equation*}
is given by
\begin{align}
\Delta(X_1\odot \cdots \odot X_n)&=1\otimes (X_1\odot \cdots\odot X_n)
+(X_1\odot \cdots \odot X_n) \otimes 1 \nonumber \\
&+ \sum_{k=1}^{n-1}\sum_{\sigma \in \shuffle{k}{n-k}} 
\sign\cdot (X_{\sigma(1)}\odot \cdots \odot X_{\sigma(k)})
\otimes (X_{\sigma(k+1)}\odot \cdots \odot X_{\sigma(n)}) \label{eq:3}
,\end{align}
where $X_1,\cdots,X_n\in\sections{\tangent{\cM}}$.
The symbol $\shuffle{k}{n-k}$ denotes the 
set of all $(k,n-k)$-shuffles and the symbol 
$\sign:=\sign(X_1,X_2,\cdots,X_n)$
denotes the Koszul signs arising
from the reordering of the homogeneous
objects $X_1,X_2,\cdots,X_n$ in each term
of the right hand side.

The differential
\begin{equation}\label{eq:4}
\SdifferentialM:\sections{S(\tangent{\cM})}
\to\sections{S(\tangent{\cM})}
\end{equation}
is the Lie derivative along the homological vector field $Q$.
The induced differential on
$\sections{S(\tangent{\cM})}\otimes_{\cR}\sections{S(\tangent{\cM})}
\cong \sections{S(\tangent{\cM})\otimes S(\tangent{\cM})}$
is again the Lie derivative $\liederivative{Q}$.

The counit map
\begin{equation}\label{eq:4-2}
\epsilon:\sections{S(\tangent{\cM})} \to \smooth{\cM}
\end{equation}
is the canonical projection.

Note that $\sections{S(\tangent{\cM})}$ admits a canonical ascending filtration
\[ \smooth{\cM}=\sections{S^{\leq 0}(\tangent{\cM})}\subset
\cdots\subset\sections{S^{\leq n}(\tangent{\cM})}\subset\cdots. \]
The following proposition is easily verified.
\begin{proposition}\label{pro:2.3}
For any dg manifold $(\cM, Q)$,
the space $\sections{S(\tangent{\cM})}$,
equipped with the comultiplication $\Delta$,
the differential $\SdifferentialM$ and the counit map $\epsilon$ as in~\eqref{eq:3},
\eqref{eq:4} and~\eqref{eq:4-2},
is a filtered dg cocommutative coalgebra over $(\smooth{\cM},Q)$.
\end{proposition}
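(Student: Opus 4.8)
The plan is to treat $\sections{S(\tangent{\cM})}$ as the cofree graded-cocommutative coalgebra on the graded $\cR$-module $\sections{\tangent{\cM}}$ and to verify the required axioms in two groups: first the purely coalgebraic ones (coassociativity, counit, cocommutativity, compatibility with the filtration), which do not involve $Q$ at all; and then the differential compatibilities, which carry the real content. For the first group I would note that the comultiplication \eqref{eq:3} is the standard unshuffle coproduct. Cocommutativity $\Delta=\twist\circ\Delta$ is immediate because the right-hand side of \eqref{eq:3} is symmetric under interchanging the two tensor slots together with the attendant Koszul sign; coassociativity and the counit axiom for \eqref{eq:4-2} reduce to bookkeeping of $(p,q)$-shuffles and are checked on a monomial $X_1\odot\cdots\odot X_n$. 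The filtration compatibility is visible from \eqref{eq:3}: a monomial of symmetric degree $n$ is sent into $\bigoplus_{i+j=n}\sections{S^i(\tangent{\cM})}\otimes_\cR\sections{S^j(\tangent{\cM})}$, while $\SdifferentialM$ preserves each $\sections{S^k(\tangent{\cM})}$ and $\epsilon$ annihilates the positive-degree part.

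The key point is the compatibility $\Delta\circ\SdifferentialM=d_{C^{\otimes 2}}\circ\Delta$, i.e.\ that $\SdifferentialM=\liederivative{Q}$ is a coderivation. The cleanest route exploits the bialgebra structure: $\sections{S(\tangent{\cM})}$ is a graded-commutative algebra under $\odot$, generated over $\KK$ by $\smooth{\cM}=\sections{S^0(\tangent{\cM})}$ and $\sections{\tangent{\cM}}=\sections{S^1(\tangent{\cM})}$; the coproduct $\Delta$ is an algebra homomorphism into the tensor-product algebra $C\otimes_\cR C$; and $\SdifferentialM$ is a degree $+1$ derivation of $\odot$ restricting to $Q$ on $\smooth{\cM}$ and to $X\mapsto\lie{Q}{X}$ on $\sections{\tangent{\cM}}$. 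I would then observe that both $\Delta\circ\SdifferentialM$ and $d_{C^{\otimes 2}}\circ\Delta$ are degree $+1$ derivations relative to $\Delta$ (the first because $\SdifferentialM$ is a derivation and $\Delta$ is multiplicative; the second because $d_{C^{\otimes 2}}$, which equals the Lie derivative on the tensor product, is a derivation of the tensor-product algebra), so it suffices to check them on generators. On $f\in\smooth{\cM}$ both sides equal $Q(f)\otimes 1$, using $\Delta(f)=f\otimes 1$ and $\SdifferentialM(1)=Q(1)=0$; on $X\in\sections{\tangent{\cM}}$ both sides equal $1\otimes\lie{Q}{X}+\lie{Q}{X}\otimes 1$, using $\Delta(X)=1\otimes X+X\otimes 1$.

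It remains to record two short points. Since $\SdifferentialM$ preserves symmetric degree and agrees with $Q$ on $\sections{S^0(\tangent{\cM})}$, while $\epsilon$ is the projection onto that summand, the counit compatibility $\epsilon\circ\SdifferentialM=Q\circ\epsilon$ is immediate. And $\SdifferentialM$ is genuinely a differential because $(\SdifferentialM)^2=\liederivative{Q}^2=\half\liederivative{\lie{Q}{Q}}=0$, the last equality following from $\lie{Q}{Q}=0$. Together with the coalgebraic axioms and the filtration compatibility above, this establishes that $\big(\sections{S(\tangent{\cM})},\SdifferentialM\big)$ is a filtered dg cocommutative coalgebra over $(\smooth{\cM},Q)$.

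I expect the main obstacle to be purely notational, namely correctly carrying the Koszul signs through the twisted Leibniz identities for $\Delta\circ\SdifferentialM$ and $d_{C^{\otimes 2}}\circ\Delta$. The virtue of the bialgebra argument is precisely that it localizes the entire verification to the two generating symmetric degrees $0$ and $1$, where the signs are trivial, so no sign ever has to be tracked through a monomial of arbitrary length. (One cannot shortcut the argument by transporting the structure of Proposition~\ref{pro:2.2} along $\pbw^\nabla$, since $\pbw^\nabla$ intertwines the two differentials only when the Atiyah class vanishes.)
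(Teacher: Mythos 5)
Your proof is correct. The paper itself offers no argument for Proposition~\ref{pro:2.3} (it is dismissed as ``easily verified''), so there is nothing to compare against; your write-up simply supplies the omitted verification. The reduction of the coderivation identity $\Delta\circ\SdifferentialM=d_{C^{\otimes 2}}\circ\Delta$ to the generators $\sections{S^{0}(\tangent{\cM})}=\smooth{\cM}$ and $\sections{S^{1}(\tangent{\cM})}=\XX(\cM)$, using that both sides are derivations along the algebra map $\Delta$ for the $\odot$-product, is the standard clean route and does avoid all Koszul-sign bookkeeping on long monomials; your closing remark that one cannot instead transport the structure of Proposition~\ref{pro:2.2} through $\pbw^{\nabla}$ is also accurate, since by Theorem~\ref{theorem1} that map intertwines the differentials only when the Atiyah class vanishes.
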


\subsection{Formal exponential map of a dg manifold}

Let $\cM$ be a finite dimensional graded manifold
and $\nabla$ be an affine connection on $\cM$.
A purely algebraic description of the Poincar\'e--Birkhoff--Witt map
has been extended to the context of $\ZZ$-\emph{graded} manifolds by
Liao--Sti\'enon \cite{MR3910470}. As pointed out in the introduction,
for an ordinary smooth manifold, the PBW map is
a formal exponential map.
In the same way, one can think of the
PBW map of a $\ZZ$-graded manifold as an
induced formal exponential map of `the virtual exponential map'
\begin{equation}\label{eq:expcm}
\exp^{\nabla}:\tangent{\cM}\to\cM\times\cM
\end{equation}
by taking fiberwise $\infty$-jets.

Recall that the Poincar\'e--Birkhoff--Witt map
\begin{equation}\label{defnpbw}
\pbw^{\nabla}:\sections{S(\tangent{\cM})}\to\cD(\cM)
\end{equation}
is defined by the inductive formula \cite{MR3910470}:
\begin{equation}
\begin{gathered}\label{eqnpbw}
\pbw^{\nabla}(f)=f, \quad\forall f\in\smooth{\cM};\\
\pbw^{\nabla}(X)=X, \quad \forall X\in \XX(\cM);
\end{gathered}
\end{equation}
and
\begin{equation}\label{formulaofpbw}
\pbw^{\nabla}(X_1\odot \cdots \odot X_n) = \frac{1}{n} \sum_{k=1}^{n} \sign_k 
\left ( X_k \pbw^{\nabla}(\boldX^{\{k\}}) - \pbw^{\nabla}(\nabla_{X_k} \boldX^{\{k\}}) \right)
,\end{equation}
where $\boldX=X_1\odot\cdots\odot X_n\in\sections{S^{n}(\tangent{\cM})}$
for homogeneous vector fields $X_1,\cdots,X_n \in \XX(\cM)$
and $\sign_{k}=(-1)^{\degree{X_k}(\degree{X_1}+\cdots+\degree{X_{k-1}})}$
is the Koszul sign.

\begin{theorem}[\cite{MR3910470}]\label{thm:pbw}
The map $\pbw^{\nabla}$ is an isomorphism of graded coalgebras
from $\sections{S(\tangent{\cM})}$ to $\cD(\cM)$ over $\smooth{\cM}$.
\end{theorem}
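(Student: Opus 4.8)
The plan is to establish the two assertions separately: first that $\pbw^\nabla$ is a bijection, and second that it intertwines the comultiplications. For bijectivity, the natural strategy is to exploit the canonical ascending filtrations on both sides. Recall that $\sections{S(\tangent{\cM})}$ is filtered by $\sections{S^{\leq n}(\tangent{\cM})}$ and $\cD(\cM)$ is filtered by the order $\cD^{\leq n}(\cM)$. I would first verify, directly from the recursive formula~\eqref{formulaofpbw}, that $\pbw^\nabla$ is a \emph{filtered} map, i.e.\ $\pbw^\nabla\big(\sections{S^{\leq n}(\tangent{\cM})}\big)\subseteq\cD^{\leq n}(\cM)$. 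This follows by induction on $n$: the term $X_k\,\pbw^\nabla(\boldX^{\{k\}})$ raises the order by at most one above $\pbw^\nabla(\boldX^{\{k\}})$, while $\pbw^\nabla(\nabla_{X_k}\boldX^{\{k\}})$ stays within order $n-1$ by the inductive hypothesis, since $\nabla_{X_k}\boldX^{\{k\}}\in\sections{S^{\leq n-1}(\tangent{\cM})}$.

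Next I would analyze the associated graded map. The associated graded of $\cD(\cM)$ with respect to the order filtration is the symmetric algebra $\sections{S(\tangent{\cM})}$ via the principal symbol, and the associated graded of the source filtration is $\sections{S(\tangent{\cM})}$ itself. I would show that the symbol of $\pbw^\nabla(X_1\odot\cdots\odot X_n)$ is exactly $X_1\odot\cdots\odot X_n$: in~\eqref{formulaofpbw} the connection term $\pbw^\nabla(\nabla_{X_k}\boldX^{\{k\}})$ is of strictly lower order and hence invisible at the level of symbols, so by induction the top-order symbol of $X_k\,\pbw^\nabla(\boldX^{\{k\}})$ is $X_k\odot(X_1\odot\cdots\widehat{X_k}\cdots\odot X_n)$, and the averaged sum $\frac1n\sum_k$ reassembles $X_1\odot\cdots\odot X_n$. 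Thus $\gr(\pbw^\nabla)=\id$, which is manifestly an isomorphism; since both filtrations are exhaustive and bounded below (in each fixed degree), a filtered map inducing an isomorphism on associated graded is itself an isomorphism of the underlying $\smooth{\cM}$-modules.

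It remains to show $\pbw^\nabla$ is a morphism of coalgebras, i.e.\ $\Delta\circ\pbw^\nabla=(\pbw^\nabla\otimes\pbw^\nabla)\circ\Delta$ and $\epsilon\circ\pbw^\nabla=\epsilon$. The counit compatibility is immediate from~\eqref{eqnpbw}, since $\pbw^\nabla$ preserves the augmentation. For the comultiplication, I would again argue by induction on $n$, feeding the recursion~\eqref{formulaofpbw} into both sides and using the Leibniz-type behaviour of $\Delta$ on $\cD(\cM)$ under left multiplication by a vector field $X_k$: concretely, for $D\in\cD(\cM)$ one has $\Delta(X_k\cdot D)=(X_k\otimes 1+1\otimes X_k)\cdot\Delta(D)$ since $X_k$ is a first-order operator acting as a derivation, and the comultiplication~\eqref{eq:3} on $\sections{S(\tangent{\cM})}$ obeys the parallel rule. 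The covariant-derivative term must be checked to be compatible with $\Delta$ as well, which reduces to the fact that $\nabla_{X_k}$ acts as a coderivation of the symmetric coalgebra. The main obstacle is the careful bookkeeping of the Koszul signs $\sign_k$ and the shuffle signs in~\eqref{eq:3} when pushing the recursion through $\Delta$: one must confirm that the signs produced on the two sides match term by term after reindexing the shuffles that split off $X_k$. This sign-tracking is the genuinely delicate part; the algebraic skeleton of the induction is otherwise straightforward.
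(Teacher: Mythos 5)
The paper does not actually prove this statement itself --- it is quoted from Liao--Sti\'enon \cite{MR3910470} --- and your plan reconstructs essentially the argument given there: bijectivity via the order filtration together with the principal-symbol identification of the associated graded of $\cD(\cM)$ with $\sections{S(T_\cM)}$ (so that the associated graded of $\pbw^\nabla$ is the identity), and compatibility with $\Delta$ by induction on the recursion \eqref{formulaofpbw}, using that left composition with a vector field is a coderivation of $\cD(\cM)$ and that $\nabla_{X_k}$ is a coderivation of the symmetric coalgebra. Both halves are sound; the only detail worth adding before the induction starts is the (routine) verification that \eqref{formulaofpbw} is well defined on $\sections{S^n(T_\cM)}$, i.e.\ that upon replacing $X_k$ by $fX_k$ the $X_k(f)$-terms produced by the operator product and by the connection term cancel.
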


Now, we assume there exists a homological vector field $Q$ on $\cM$
so that $(\cM,Q)$ is a dg manifold. Then,
both $\Gamma(S(\tangent{\cM}))$ and $\cD(\cM)$ in~\eqref{defnpbw}
are dg coalgebras over $(\smooth{\cM},Q)$,
according to Propositions~\ref{pro:2.2} and~\ref{pro:2.3}.
We think of the elements of the dg coalgebra $\big(\sections{S(T_\cM)},\SdifferentialM\big)$ 
as fiberwise dg distributions on the dg vector bundle $\pi: T_\cM\to \cM$
with support the zero section --- the homological vector field on $T_\cM$ is 
the complete lift $\hat{Q}$ of the homological vector field $Q$ on $\cM$
\cite{MR3319134,MR4276044}. 
Likewise, we think of the elements of the dg coalgebra
$\big(\cD(\cM),\UdifferentialM\big)$
as fiberwise dg distributions on the dg fiber bundle
$\pr_1: \cM\times \cM\to \cM$ with support the diagonal
$\Delta$ --- the homological vector field on $\cM\times\cM$ is $(Q,Q)$.
On the level of fiberwise $\infty$-jets, the fact that
the virtual exponential map \eqref{eq:expcm}
is a map of dg manifolds
is equivalent
to the map $\pbw^{\nabla}:\big(\sections{S(\tangent{\cM})},\SdifferentialM\big)
\to\big(\cD(\cM),\UdifferentialM\big)$ being an isomorphism of
dg coalgebras over $(\smooth{\cM},Q)$.
This consideration leads to the following
\begin{theorem}\label{theorem1}
Let $(\cM,Q)$ be a dg manifold.
The Atiyah class $\atiyahclassQ$ vanishes
if and only if there exists a torsion-free affine connection $\nabla$ on $\cM$
such that \[ \pbw^\nabla:\big(\sections{S(\tangent{\cM})},\SdifferentialM\big)
\to\big(\cD(\cM),\UdifferentialM\big) \]
is an isomorphism of dg coalgebras over $(C^\infty(\cM),Q)$.
\end{theorem}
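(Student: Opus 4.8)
The plan is to exploit the fact that, by Theorem~\ref{thm:pbw}, the map $\pbw^\nabla$ is \emph{already} an isomorphism of graded coalgebras over $\smooth{\cM}$ (comultiplication and counit are preserved). Upgrading it to an isomorphism of dg coalgebras over $(\smooth{\cM},Q)$ therefore amounts to the single additional requirement that it be compatible with the differentials, i.e.\ that it be a cochain map
\[ \UdifferentialM\circ\pbw^\nabla=\pbw^\nabla\circ\SdifferentialM\colon\sections{S(\tangent{\cM})}\to\cD(\cM). \]
The heart of the argument is thus the following lemma, which I would isolate first: \emph{for a torsion-free affine connection $\nabla$ on $\cM$, the map $\pbw^\nabla$ is a cochain map if and only if the Atiyah cocycle $\atiyahcocycleQ$ vanishes.} The relevance of the Atiyah cocycle is transparent once one observes that $\atiyahcocycleQ=\liederivative{Q}\nabla$ is precisely the Lie derivative of the connection along $Q$, so its vanishing is the algebraic incarnation of the $Q$-invariance of $\nabla$ anticipated in the Introduction.

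To prove the `if' half of the lemma I would induct on the symmetric degree $n$, using the recursive characterization~\eqref{formulaofpbw} of $\pbw^\nabla$. The cases $n=0,1$ are immediate, since $\pbw^\nabla$ is the identity on $\smooth{\cM}\oplus\XX(\cM)$ and both differentials restrict to $\lie{Q}{\argument}$ there. For the inductive step I would apply $\UdifferentialM=\gerstenhaber{Q}{\argument}$ to~\eqref{formulaofpbw}, using that $\gerstenhaber{Q}{\argument}$ is a graded derivation of the associative algebra $\cD(\cM)$ to distribute it across the products $X_k\,\pbw^\nabla(\boldX^{\{k\}})$, and that $\SdifferentialM=\liederivative{Q}$ and each $\nabla_{X_k}$ act as derivations of $\sections{S(\tangent{\cM})}$. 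The inductive hypothesis disposes of all terms $\pbw^\nabla(\liederivative{Q}\boldX^{\{k\}})$ and $\pbw^\nabla\big(\liederivative{Q}(\nabla_{X_k}\boldX^{\{k\}})\big)$, and a sign computation shows the remaining product terms match those produced by $\pbw^\nabla\circ\liederivative{Q}$; the only terms in which $\lie{Q}{\argument}$ meets the connection assemble into $\pbw^\nabla$ applied to the operator $\boldY\mapsto\lie{Q}{\nabla_{X_k}\boldY}-\nabla_{\lie{Q}{X_k}}\boldY-(-1)^{\degree{X_k}}\nabla_{X_k}\liederivative{Q}\boldY$, evaluated at $\boldY=\boldX^{\{k\}}$, which is the derivation extension of $\atiyahcocycleQ(X_k,\argument)$ and hence vanishes by hypothesis. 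For the `only if' half it suffices to extract the lowest symmetric degree: a direct computation with~\eqref{formulaofpbw} at $n=2$ gives $\UdifferentialM(\pbw^\nabla(X\odot Y))-\pbw^\nabla(\SdifferentialM(X\odot Y))=-\atiyahcocycleQ(X,Y)$, so being a cochain map forces $\atiyahcocycleQ=0$.

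Granting the lemma, the theorem follows quickly. If such a torsion-free $\nabla$ exists, the lemma gives $\atiyahcocycleQ=0$, whence $\atiyahclassQ=0$. Conversely, suppose $\atiyahclassQ=0$ and fix any torsion-free connection $\nabla_0$. Then $\atiyahcocycle{\nabla_0}{(\cM,Q)}=\cQ\beta$ for some degree-$0$ tensor $\beta\in\sections{\cM;\tangent{\cM}^{\vee}\otimes\End(\tangent{\cM})}$. Since $\cQ$ commutes with symmetrization and, by Proposition~\ref{Atiyahproperty}(1), $\atiyahcocycle{\nabla_0}{(\cM,Q)}$ is symmetric, I may replace $\beta$ by its symmetrization and assume $\beta\in\sections{\cM;S^2(\tangent{\cM}^{\vee})\otimes\tangent{\cM}}$. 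Setting $\nabla=\nabla_0-\beta$ yields another \emph{torsion-free} connection (the correction being a symmetric $(1,2)$-tensor), and the change-of-connection computation underlying Proposition~\ref{Atiyahproperty}(3) gives $\atiyahcocycleQ=\atiyahcocycle{\nabla_0}{(\cM,Q)}-\cQ\beta=0$. The lemma then makes $\pbw^\nabla$ a dg coalgebra isomorphism.

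The main obstacle I anticipate is the inductive step of the `if' direction of the lemma: organizing the recursive PBW formula so that, after commuting $\gerstenhaber{Q}{\argument}$ through the products and past the connection, every contribution either cancels against a term produced by $\pbw^\nabla\circ\liederivative{Q}$ or is absorbed into the Atiyah cocycle. This demands careful tracking of the Koszul signs $\sign_k$ and of the derivation property of $\liederivative{Q}$ on symmetric powers, and it is here that torsion-freeness—via the symmetry of $\atiyahcocycleQ$—keeps the combinatorics manageable. Notably, the argument never requires the curvature $\curvature$ to vanish: the vanishing of $\atiyahcocycleQ$ alone propagates through all symmetric degrees.
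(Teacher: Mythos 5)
Your proposal is correct and follows essentially the same route as the paper: both isolate the defect $\pbwcommutator=\UdifferentialM\circ\pbw^\nabla-\pbw^\nabla\circ\SdifferentialM$, compute it on $S^{\leq 2}$ to identify it with $-\atiyahcocycleQ$ in degree two, and use the recursive PBW formula to show inductively that the vanishing of the Atiyah cocycle propagates to all symmetric degrees (this is the content of Proposition~\ref{formula1} and Lemmas~\ref{lem:1-3} and~\ref{lem:4}). Your explicit treatment of the step ``$\atiyahclassQ=0$ implies the existence of a \emph{torsion-free} connection with vanishing cocycle'' --- symmetrizing the primitive $\beta$ and setting $\nabla=\nabla_0-\beta$ --- spells out a point the paper only implicitly attributes to Proposition~\ref{Atiyahproperty}, and is a correct and welcome clarification rather than a different method.
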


\begin{remark}
A similar theorem in the same spirit
concerning the Atiyah class of Lie pairs
was obtained in \cite[Theorem~5.10]{MR4271478}.
It would be interesting to establish a result
that encompasses both \cite[Theorem~5.10]{MR4271478}
and Theorem~\ref{theorem1} under a unified framework.
\end{remark}

In order to prove
Theorem~\ref{theorem1}, we first introduce a linear map
\[ \pbwcommutator:\sections{S(\tangent{\cM})}\to\cD(\cM) \]
by
\begin{equation}\label{eq:C}
\pbwcommutator:=\UdifferentialM\circ\pbw^{\nabla}-\pbw^{\nabla}\circ\SdifferentialM.
\end{equation}
One can easily check that $\pbwcommutator$ is a $C^\infty(\cM)$-linear map of degree +1.
Moreover, for $n\geq 0$,
\[ \pbwcommutator\big(\sections{S^{\leq n}(\tangent{\cM})}\big)
\subseteq\cD^{\leq n-1}(\cM) .\]

The following proposition indicates that $\pbwcommutator$
can be completely determined by a recursive formula.

\begin{proposition}\label{formula1}
Let $(\cM,Q)$ be a dg manifold, and $\nabla$ a torsion-free
affine connection on $\cM$.
Then the map $\pbwcommutator$ satisfies
\begin{gather}
\pbwcommutator(f)=0; \label{formulaA1} \\
\pbwcommutator(X)=0; \label{formulaA2} \\
\pbwcommutator(X\odot Y)=-\atiyahcocycleQ(X,Y), \label{formulaA4}
\end{gather}
for all $f\in\smooth{\cM}$, $X,Y\in\XX(\cM)$, and,
for $n\geq3$, it satisfies the recursive formula
\begin{multline}\label{formulaA3}
\pbwcommutator(\boldX)=\frac{1}{n}\sum_{k=1}^{n} \sign_{k}
\left [ (-1)^{\degree{X_{k}}}X_{k}\cdot \pbwcommutator(\boldX^{\{k\}})
-\pbwcommutator(\nabla_{X_k}\boldX^{\{k\}})\right] \\
-\frac{2}{n} \sum_{i<j} \sign_{i} \sign_{j} (-1)^{\degree{X_{i}}
\cdot\degree{X_{j}}} \pbw^{\nabla}\left( \atiyahcocycleQ(X_i, X_j)
\odot \boldX^{\{i,j\}}\right),
\end{multline}
where $\boldX=X_{1}\odot\cdots\odot X_{n}\in\sections{S^{n}(\tangent\cM)}$
denotes the symmetric tensor product of 
$n$ homogeneous vector fields
$X_1,\cdots,X_n \in \XX(\cM)$;
$\boldX^{\{k\}}=X_{1}\odot \cdots\odot \widehat{X}_{k} \odot\cdots \odot X_{n}$ for
any $1\leq k\leq n$; 
$\boldX^{\{i,j\}}=X_{1}\odot\cdots\odot \widehat{X}_{i}\odot\cdots\odot 
\widehat{X}_{j}\odot\cdots\odot X_{n}$ for
any $1\leq i < j\leq n$; and
$\sign_{k}=(-1)^{\degree{X_k}(\degree{X_1}+\cdots+\degree{X_{k-1}})}$ is the Koszul sign
arising from the reordering 
$X_1,X_2,\cdots,X_n\mapsto
X_k,X_1,X_2,\cdots,X_{k-1},X_{k+1},\cdots,X_n$.
\end{proposition}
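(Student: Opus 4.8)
The plan is to establish the four assertions by induction on the symmetric degree $n$, exploiting the recursive definition \eqref{formulaofpbw} of $\pbw^\nabla$ together with the definition \eqref{eq:C} of $\pbwcommutator$. The base cases are immediate: for $f\in\smooth{\cM}$ and $X\in\XX(\cM)$, equations \eqref{eqnpbw} give $\pbw^\nabla(f)=f$ and $\pbw^\nabla(X)=X$, so $\pbwcommutator(f)=\UdifferentialM(f)-\pbw^\nabla(Q(f))=Q(f)-Q(f)=0$, and likewise $\pbwcommutator(X)=\llbracket Q,X\rrbracket-\pbw^\nabla([Q,X])=0$ since $[Q,X]\in\XX(\cM)$, proving \eqref{formulaA1} and \eqref{formulaA2}. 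For the degree-two case \eqref{formulaA4} I would expand $\pbwcommutator(X\odot Y)=\UdifferentialM(\pbw^\nabla(X\odot Y))-\pbw^\nabla(\liederivative{Q}(X\odot Y))$, substitute $\pbw^\nabla(X\odot Y)=\tfrac12(X\cdot Y+(-1)^{\degree X\degree Y}Y\cdot X-\nabla_XY-(-1)^{\degree X\degree Y}\nabla_YX)$ coming from \eqref{formulaofpbw}, and compute the commutator with $Q$. The terms $X\cdot Y$ and $Y\cdot X$ contribute the Leibniz pieces $[Q,X]\cdot Y+\cdots$ which match exactly $\pbw^\nabla$ applied to $\liederivative{Q}(X\odot Y)=[Q,X]\odot Y+(-1)^{\degree X}X\odot[Q,Y]$, while the surviving terms assemble into $-([Q,\nabla_XY]-\nabla_{[Q,X]}Y-(-1)^{\degree X}\nabla_X[Q,Y])=-\atiyahcocycleQ(X,Y)$, using torsion-freeness to symmetrize.

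For the inductive step $n\geq 3$, the key is to apply $\UdifferentialM=\llbracket Q,\argument\rrbracket$ to the recursion \eqref{formulaofpbw} for $\pbw^\nabla(\boldX)$. Writing $\pbw^\nabla(\boldX)=\tfrac1n\sum_k\sign_k(X_k\cdot\pbw^\nabla(\boldX^{\{k\}})-\pbw^\nabla(\nabla_{X_k}\boldX^{\{k\}}))$, I would commute $Q$ past each summand via the Leibniz rule for the Gerstenhaber bracket:
\begin{equation*}
\llbracket Q,X_k\cdot D\rrbracket=[Q,X_k]\cdot D+(-1)^{\degree{X_k}}X_k\cdot\llbracket Q,D\rrbracket.
\end{equation*}
The pieces involving $[Q,X_k]$, together with the corresponding terms from $\pbw^\nabla\circ\liederivative{Q}(\boldX)$, reorganize — using the derivation property of $\liederivative{Q}$ on symmetric products and on $\nabla_{X_k}\boldX^{\{k\}}$ — into a lower-degree expression in $\pbwcommutator(\boldX^{\{k\}})$ and $\pbwcommutator(\nabla_{X_k}\boldX^{\{k\}})$, yielding the first sum in \eqref{formulaA3}. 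The remaining terms are precisely those where $Q$ hits the connection $\nabla_{X_k}$, producing the Atiyah-cocycle contractions; tracking how $\liederivative{Q}(\nabla_{X_k}\boldX^{\{k\}})$ differs from $\nabla_{[Q,X_k]}\boldX^{\{k\}}+\nabla_{X_k}\liederivative{Q}(\boldX^{\{k\}})$ reveals the $\atiyahcocycleQ$ terms, and resumming over the pair $(i,j)$ with the appropriate Koszul signs gives the second sum in \eqref{formulaA3}, with the combinatorial factor $\tfrac2n$ emerging from the two ways a given pair can appear as $(k,\ell)$ in the double bookkeeping.

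The main obstacle will be the sign and combinatorial bookkeeping in the inductive step: one must carefully match the Koszul signs $\sign_k$, $\sign_i\sign_j$ arising from reordering the homogeneous $X_i$ against those produced by the comultiplication and by moving $Q$ through symmetric products, and verify that the factor-of-two in the last sum is correct after accounting for the symmetry $\atiyahcocycleQ(X_i,X_j)=(-1)^{\degree{X_i}\degree{X_j}}\atiyahcocycleQ(X_j,X_i)$ from Proposition~\ref{Atiyahproperty}. A clean way to organize this is to observe that $\pbwcommutator$ lowers filtration degree, so at each stage the newly-created top-order term of $\llbracket Q,\pbw^\nabla(\boldX)\rrbracket$ is governed entirely by the Atiyah cocycle, and then to separate the top-order contribution (giving the $\atiyahcocycleQ$ sum) from the strictly lower-order contribution (giving the recursive $\pbwcommutator$ sum). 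I expect that fixing a consistent convention for the shuffle signs at the outset, and using the fact that both $\liederivative{Q}$ and $\llbracket Q,\argument\rrbracket$ are degree $+1$ derivations compatible under $\pbw^\nabla$ up to the error term $\pbwcommutator$, will make the sign verification mechanical once the structure is laid out.
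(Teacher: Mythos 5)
Your proposal is correct and follows essentially the same route as the paper: the base cases \eqref{formulaA1}--\eqref{formulaA4} are verified directly from \eqref{eqnpbw} and \eqref{formulaofpbw} (the paper likewise uses torsion-freeness to reduce $\pbw^\nabla(X\odot Y)$ to $XY-\nabla_XY$ before taking the commutator with $Q$), and the recursive formula \eqref{formulaA3} is obtained exactly as you describe, by applying $\UdifferentialM$ to the PBW recursion via the Leibniz rule, expanding $\pbw^\nabla\circ\SdifferentialM(\boldX)$, and extracting the Atiyah-cocycle terms from the failure of $\SdifferentialM$ to commute with $\nabla_{X_k}$, with the factor $\tfrac{2}{n}$ arising from symmetrizing the double sum over ordered pairs. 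The paper's Lemma~\ref{lem:4} carries out the bookkeeping you defer, but the decomposition and the mechanism producing $\atiyahcocycleQ$ are the same as in your plan.
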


We now prove Theorem~\ref{theorem1} based on
Proposition~\ref{formula1}.

\begin{proof}[Proof of Theorem~\ref{theorem1}]
Observe that, according to Proposition~\ref{Atiyahproperty},
$\atiyahclassQ$ vanishes if and only if there exists an
affine connection $\nabla'$ for which $\At^{\nabla'}_{(\cM,Q)}=0$.
It follows from $\liederivative{Q}(\nabla')=\At^{\nabla'}_{(\cM,Q)}=0$
that $\liederivative{Q}(T^{\nabla'})=0$.
Therefore, if the Atiyah cocycle of the affine connection $\nabla'$ vanishes, 
then so does the Atiyah cocycle of the torsion-free connection 
$\nabla=\nabla'-\frac{1}{2}T^{\nabla'}$:
\[ \atiyahcocycleQ=\liederivative{Q}(\nabla)
=\liederivative{Q}(\nabla'-\frac{1}{2}T^{\nabla'})
=\At^{\nabla'}_{(\cM,Q)}-\frac{1}{2}\liederivative{Q}(T^{\nabla'})=0 .\]

Thus, it suffices to prove
that $\pbwcommutator = 0$ if and only if $\atiyahcocycleQ = 0$.

Assume that $\pbwcommutator = 0$. By Proposition~\ref{formula1}, we have
\[ \atiyahcocycleQ(X,Y)
=-\pbwcommutator(X\odot Y)=0 \]
for all $X,Y\in\XX(\cM)$.

Conversely, assume that $\atiyahcocycleQ=0$. Then we have
$\pbwcommutator(X\odot Y)=0$ by Proposition~\ref{formula1}.
Hence $\pbwcommutator(\boldY)=0$ for all $\boldY\in
\sections{S^{\leq 2}(\tangent{\cM})}$.
Moreover, Equation~\eqref{formulaA3} can be written as
\[\pbwcommutator(\boldX)=\frac{1}{n}\sum_{k=1}^{n} \sign_{k}\left
[ (-1)^{\degree{X_{k}}}X_{k}\cdot \pbwcommutator(\boldX^{\{k\}})
-\pbwcommutator(\nabla_{X_k}\boldX^{\{k\}})\right], \ \ \ \forall \boldX \in
\big(\sections{S^{\geq 3}(\tangent{\cM})}\big)
.\]
Therefore, $\pbwcommutator=0$ by the inductive argument.
\end{proof}

\subsection{Proof of Proposition~\ref{formula1}}
Now we turn to the proof of Proposition~\ref{formula1}.
We will divide the proof into several lemmas.

\begin{lemma}\label{lem:1-3}
Under the same hypothesis as in Proposition~\ref{formula1},
Equations~\eqref{formulaA1}, \eqref{formulaA2} and~\eqref{formulaA4} hold.
\end{lemma}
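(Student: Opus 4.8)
The plan is to prove the three base-case identities directly from the recursive definition of $\pbw^\nabla$ given in~\eqref{eqnpbw} and~\eqref{formulaofpbw}, by unwinding the definition of $\pbwcommutator$ in~\eqref{eq:C}. For~\eqref{formulaA1}, I first note that $\pbw^\nabla(f)=f$ for $f\in\smooth{\cM}$, so $\pbw^\nabla$ restricted to $\smooth{\cM}$ is the identity. Since $\SdifferentialM$ on $\smooth{\cM}$ is just $Q(f)=\liederivative{Q}f$, and since $\UdifferentialM(f)=\llbracket Q,f\rrbracket=Q\cdot f-f\cdot Q$ acts on functions as the derivation $Q(f)$, I expect the two terms $\UdifferentialM\circ\pbw^\nabla(f)$ and $\pbw^\nabla\circ\SdifferentialM(f)$ to both equal $Q(f)$ and hence cancel. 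The identity~\eqref{formulaA1} follows.

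For~\eqref{formulaA2}, I would again use $\pbw^\nabla(X)=X$ for $X\in\XX(\cM)$. Then $\UdifferentialM(\pbw^\nabla(X))=\llbracket Q,X\rrbracket=\liederivative{Q}X$ as a vector field (viewed as a first-order differential operator), while $\pbw^\nabla(\SdifferentialM X)=\pbw^\nabla(\liederivative{Q}X)=\liederivative{Q}X$ since $\liederivative{Q}X$ is again a vector field and $\pbw^\nabla$ is the identity on $\XX(\cM)$. These cancel, giving~\eqref{formulaA2}.

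The genuinely substantive base case is~\eqref{formulaA4}, and I expect this to be the main obstacle. Here I would apply the $n=2$ instance of the recursion~\eqref{formulaofpbw} to compute $\pbw^\nabla(X\odot Y)$ explicitly as a second-order differential operator, then apply $\UdifferentialM=\llbracket Q,\argument\rrbracket$ and subtract $\pbw^\nabla(\liederivative{Q}(X\odot Y))$, where $\liederivative{Q}(X\odot Y)=(\liederivative{Q}X)\odot Y+(-1)^{\degree X}X\odot(\liederivative{Q}Y)$ by the coderivation/Leibniz property of $\SdifferentialM$. The key step is to expand $\llbracket Q,\pbw^\nabla(X\odot Y)\rrbracket$ using the Leibniz rule for the Gerstenhaber bracket and to recognize, after collecting the terms involving $\nabla$ and the bracket $\liederivative{Q}$, exactly the combination $[Q,\nabla_XY]-\nabla_{[Q,X]}Y-(-1)^{\degree X}\nabla_X[Q,Y]$, which is by definition $\atiyahcocycleQ(X,Y)$. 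The delicate bookkeeping will be tracking the Koszul signs and verifying that all second-order (and the unwanted first-order) contributions cancel, leaving precisely $-\atiyahcocycleQ(X,Y)$ as a genuine vector field (zeroth-order correction term living in $\cD^{\leq 1}(\cM)$, in fact in $\XX(\cM)\subseteq\cD(\cM)$). Torsion-freeness of $\nabla$ enters to symmetrize $\nabla_XY$ against $\nabla_YX$ and, via Proposition~\ref{Atiyahproperty}(1), to guarantee that the resulting expression is symmetric in $X,Y$, consistent with its being defined on $S^2(\tangent{\cM})$.

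Throughout, I would lean on the fact, recorded before the statement of Proposition~\ref{formula1}, that $\pbwcommutator$ lowers order by one, so that $\pbwcommutator(X\odot Y)\in\cD^{\leq 1}(\cM)$; checking that its symbol (the top-order part) vanishes and that it has no constant term reduces the computation to identifying the order-one piece, which streamlines the sign analysis. This order-filtration argument is the technical device I expect to make the otherwise error-prone computation tractable.
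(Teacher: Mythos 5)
Your proposal is correct and follows essentially the same route as the paper: the first two identities are immediate from $\pbw^\nabla(f)=f$ and $\pbw^\nabla(X)=X$, and for~\eqref{formulaA4} the paper likewise uses torsion-freeness together with the $n=2$ case of~\eqref{formulaofpbw} to get $\pbw^\nabla(X\odot Y)=XY-\nabla_XY$, then expands $\UdifferentialM\circ\pbw^\nabla-\pbw^\nabla\circ\SdifferentialM$ to land exactly on $-\atiyahcocycleQ(X,Y)$. The order-filtration remark is a harmless extra; the core computation is identical.
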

\begin{proof}

Equations~\eqref{formulaA1} and~\eqref{formulaA2} follow immediately
from Equation~\eqref{eqnpbw}.

To prove Equation~\eqref{formulaA4},
let $X,Y\in \XX(\cM)$ be homogeneous vector fields. Since $\nabla$ is
torsion-free, we have
\begin{equation*}
\nabla_{X}Y-(-1)^{\degree{X}\cdot \degree{Y}}\nabla_{Y}X=[X,Y]=XY-YX .
\end{equation*}
It then follows from Equation~\eqref{formulaofpbw} that
\[\pbw^{\nabla}(X\odot Y)=XY - \nabla_{X}Y.\]

From there, we obtain
\begin{equation*}
\UdifferentialM \circ \pbw^{\nabla}(X\odot Y)=[Q,X]Y+(-1)^{\degree{X}}X[Q,Y]-[Q,\nabla_{X}Y]
\end{equation*}
and
\begin{align*}
\pbw^{\nabla}\circ \SdifferentialM(X\odot Y)&=\pbw^{\nabla}\left([Q,X]\odot Y 
+ (-1)^{\degree{X}} X\odot [Q,Y] \right)\\
&=\left( [Q,X]Y - \nabla_{[Q,X]}Y\right)
+(-1)^{\degree{X}}\left( X[Q,Y]- \nabla_{X}[Q,Y]\right).
\end{align*}
As a result, we have
\begin{align*}
\pbwcommutator(X\odot Y) &=(\UdifferentialM \circ \pbw^{\nabla}
-\pbw^{\nabla}\circ \SdifferentialM)(X\odot Y)\\
&= -\left( [Q,\nabla_{X}Y] - \nabla_{[Q,X]}Y-(-1)^{\degree{X}} \nabla_{X}[Q,Y]\right) \\
&=-\atiyahcocycleQ(X,Y).
\qedhere\end{align*}
\end{proof}

In the sequel, we adopt the following notations.
For any $\boldX=X_{1}\odot \cdots \odot X_{n}\in
\sections{S^{n}(\tangent{\cM})}$, we write
$\boldX^{\{k\}}=X_{1}\odot\cdots\odot\widehat{X}_{k}\odot\cdots\odot X_{n}$; for
$i\neq j$, we write 
$\boldX^{\{i,j\}}=X_{1}\odot\cdots\odot
\widehat{X}_{i}\odot\cdots\odot
\widehat{X}_{j}\odot\cdots\odot X_{n}$,
and for all $1\leq i\leq n$,
$\boldX^{\{i,i\}}=0$.

\begin{lemma}\label{lem:4}
Under the same hypothesis as in Proposition~\ref{formula1}, for all
$\boldX=X_1\odot\cdots\odot X_n\in\sections{S^{n}(\tangent{\cM})}$
with $n\geq 3$, we have
\begin{align*}
\UdifferentialM\circ \pbw^{\nabla}(\boldX)&=\frac{1}{n}\sum_{k=1}^{n}\varepsilon
\cdot [Q,X_k]\cdot \pbw^{\nabla}(\boldX^{\{k\}}) \\
& \quad +\frac{1}{n}\sum_{k=1}^{n} \left(\varepsilon\cdot X_k\cdot
\UdifferentialM\big( \pbw^{\nabla}(\boldX^{\{k\}})\big)
-\varepsilon\cdot \UdifferentialM\big( \pbw^{\nabla}(\nabla_{X_k}\boldX^{\{k\}})\big)\right)
\end{align*}
and
\begin{align*}
&\pbw^{\nabla} \circ \SdifferentialM (\boldX)\\
&=\frac{1}{n}\sum_{k=1}^{n}\left(\varepsilon\cdot [Q,X_k]\cdot\pbw^{\nabla}(\boldX^{\{k\}})
+\varepsilon\cdot X_k\cdot \pbw^{\nabla}\big(\SdifferentialM(\boldX^{\{k\}})\big)
-\varepsilon\cdot \pbw^{\nabla}\big(\SdifferentialM(\nabla_{X_k}\boldX^{\{k\}})\big)\right)\\
&\quad +\frac{1}{n}\sum_{i<j}\varepsilon\cdot
\pbw^{\nabla}\left(2\atiyahcocycleQ(X_i,X_j)\odot \boldX^{\{i,j\}}\right).
\end{align*}
In the two equations above and in the proof of the Lemma as well,
the symbol $\sign=\sign(Q,X_1,\cdots,X_n)$ denotes the Koszul signs
arising from the reordering of the homogeneous objects
$Q,X_1,\cdots,X_n$ in each term of the right hand sides.
\end{lemma}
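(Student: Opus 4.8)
The plan is to feed the defining recursion \eqref{formulaofpbw} for $\pbw^{\nabla}$ into the two differentials and track the Koszul signs carefully, treating the two displayed identities separately. The first one is the easy half. Since $\UdifferentialM=\llbracket Q,\argument\rrbracket$ is a graded derivation of degree $+1$ of the associative algebra $(\cD(\cM),\cdot)$, I would apply $\UdifferentialM$ term by term to the right-hand side of \eqref{formulaofpbw}. The Leibniz rule turns each product into
$\UdifferentialM\big(X_k\cdot\pbw^{\nabla}(\boldX^{\{k\}})\big)=[Q,X_k]\cdot\pbw^{\nabla}(\boldX^{\{k\}})+(-1)^{\degree{X_k}}X_k\cdot\UdifferentialM\big(\pbw^{\nabla}(\boldX^{\{k\}})\big)$,
while the terms $\pbw^{\nabla}(\nabla_{X_k}\boldX^{\{k\}})$ are single operators on which $\UdifferentialM$ acts directly. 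Absorbing the extra sign $(-1)^{\degree{X_k}}$ together with the old $\sign_k$ into the Koszul sign $\sign(Q,X_1,\dots,X_n)$ yields the first formula verbatim.

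For the second identity I would first use that $\SdifferentialM=\liederivative{Q}$ is a derivation of the symmetric algebra $\sections{S(\tangent{\cM})}$, so that $\SdifferentialM(\boldX)=\sum_{k=1}^{n}\sign_{k}\,[Q,X_k]\odot\boldX^{\{k\}}$, a sum of elements of $\sections{S^{n}(\tangent{\cM})}$. Applying the recursion \eqref{formulaofpbw} to each summand $[Q,X_k]\odot\boldX^{\{k\}}$ and splitting the inner sum according to whether the extracted factor is $[Q,X_k]$ or one of the remaining $X_j$ ($j\neq k$) produces a double sum. The ``extract $[Q,X_k]$'' terms immediately give $[Q,X_k]\cdot\pbw^{\nabla}(\boldX^{\{k\}})$ together with $-\pbw^{\nabla}(\nabla_{[Q,X_k]}\boldX^{\{k\}})$. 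In the ``extract $X_j$'' terms I would interchange the order of summation: for fixed $j$ the inner sum $\sum_{k\neq j}\sign\,[Q,X_k]\odot\boldX^{\{k,j\}}$ is precisely $\SdifferentialM(\boldX^{\{j\}})$, which reproduces the middle term $X_j\cdot\pbw^{\nabla}\big(\SdifferentialM(\boldX^{\{j\}})\big)$.

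The crux is to collect the leftover covariant-derivative contributions from both cases and rewrite them as $-\pbw^{\nabla}\big(\SdifferentialM(\nabla_{X_k}\boldX^{\{k\}})\big)$ plus the Atiyah correction. For this I would invoke the defining property of the Atiyah cocycle in the form
\[ [Q,\nabla_{X}Y]=\nabla_{[Q,X]}Y+(-1)^{\degree X}\nabla_{X}[Q,Y]+\atiyahcocycleQ(X,Y), \]
extended as a derivation to symmetric products, which is exactly the statement that $\liederivative{Q}$ fails to commute with $\nabla$ by $\atiyahcocycleQ$. Expanding $\SdifferentialM(\nabla_{X_k}\boldX^{\{k\}})$ with this identity recovers the term $\nabla_{[Q,X_k]}\boldX^{\{k\}}$ from the first case, while a relabelling of summation indices (sending the monomials with one $\nabla$-factor and one $[Q,\argument]$-factor among the triples of distinct indices to one another) shows that the residual derivation terms coincide with the ``extract $X_j$'' connection contributions. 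What is left over is the genuine residue $\sum_{k}\sum_{j\neq k}\sign\,\atiyahcocycleQ(X_k,X_j)\odot\boldX^{\{k,j\}}$, a sum over ordered pairs; folding it into $2\sum_{i<j}$ by the symmetry $\atiyahcocycleQ(X_i,X_j)=(-1)^{\degree{X_i}\degree{X_j}}\atiyahcocycleQ(X_j,X_i)$ of Proposition~\ref{Atiyahproperty} accounts for the factor $2$ in the last term. The main obstacle is precisely this last stage: organizing the two families of covariant-derivative terms so that the Atiyah identity applies cleanly, and controlling the Koszul signs through every reindexing so that both the cancellation of the connection terms and the symmetrization producing the factor $2$ come out correctly.
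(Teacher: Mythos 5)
Your proposal is correct and follows essentially the same route as the paper's proof: the first identity via the Leibniz rule for $\UdifferentialM$ applied to the recursion \eqref{formulaofpbw}, and the second by expanding $\pbw^{\nabla}\circ\SdifferentialM(\boldX)$ into the four families of terms ($[Q,X_k]$-extraction vs.\ $X_j$-extraction, product vs.\ connection), reindexing over triples of distinct indices, and invoking the Atiyah cocycle identity together with the graded symmetry of $\atiyahcocycleQ$ (valid by torsion-freeness) to produce the factor $2\sum_{i<j}$. The only cosmetic difference is that you verify the key combination by expanding $\SdifferentialM(\nabla_{X_k}\boldX^{\{k\}})$ and matching leftovers, whereas the paper assembles $\cA^{2}+\cC$ in the forward direction; these are the same computation read in opposite directions.
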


\begin{proof}
The formula for $\UdifferentialM\circ \pbw^{\nabla}(\boldX)$ is immediate
from Equation~\eqref{formulaofpbw}.

Next, we will compute $\pbw^{\nabla}\circ \SdifferentialM(\boldX)$.
Since
$\SdifferentialM (\boldX)= \sum_{k=1}^{n} \varepsilon \cdot ([Q, X_k] \odot \boldX^{\{k\}})$,
applying Equation~\eqref{formulaofpbw}, we have
\[ \pbw^{\nabla} \circ \SdifferentialM(\boldX)=
\frac{1}{n}\left( \cA^{1}-\cA^{2}+\cB-\cC \right) ,\]
where
\begin{align}
\cA^{1}&:= \sum_{k=1}^{n} \varepsilon\cdot [Q,X_k]
\cdot \pbw^{\nabla}(\boldX^{\{k\}}), \label{eq:ROM}\\
\cA^{2}&:= \sum_{k=1}^{n} \varepsilon\cdot
\pbw^{\nabla}(\nabla_{[Q,X_k]} \boldX^{\{k\}}), \nonumber\\
\cB&:= \sum_{k=1}^{n} \sum_{i=1}^n \varepsilon\cdot X_i
\cdot \pbw^{\nabla}([Q,X_k]\odot \boldX^{\{i,k\}}),\nonumber\\
\cC&:= \sum_{k=1}^{n} \sum_{i=1}^{n} \varepsilon
\cdot\pbw^{\nabla}(\nabla_{X_i}([Q,X_k]\odot \boldX^{\{i,k\}}) . \nonumber
\end{align}

First, by changing the order of summation, we obtain
\begin{align}
\cB &=\sum_{i=1}^{n} \sum_{k=1}^{n} \varepsilon \cdot X_i 
\cdot \pbw^{\nabla}\left ([Q,X_k]\odot \boldX^{\{i,k\}}\right) \nonumber\\
& = \sum_{i=1}^{n} \varepsilon\cdot X_i\cdot 
\pbw^{\nabla}\left (\SdifferentialM(\boldX^{\{i\}}) \right). \label{eqnB}
\end{align}

We also can write
\begin{align}
\cA^{2} &= \sum_{k=1}^{n} \sum_{i=1}^n \varepsilon \cdot 
\pbw^{\nabla}\left((\nabla_{[Q,X_k]}X_i)\odot \boldX^{\{k,i\}}\right)\nonumber \\
&= \sum_{k=1}^n \sum_{i=1}^{n} \varepsilon\cdot 
\pbw^{\nabla}\left((\nabla_{[Q,X_i]}X_k)\odot \boldX^{\{i,k\}}\right). \label{eq:SCE}
\end{align}

Now we also have
\begin{align*}
&\sum_{k=1}^{n} \sum_{i=1}^{n}\varepsilon \cdot
\pbw^{\nabla}\big( [Q,X_k]\odot \nabla_{X_i}\boldX^{\{i,k\}} \big)\\
=&\sum_{k=1}^{n} \sum_{i=1}^{n} \sum_{j=1}^{n}\varepsilon \cdot
\pbw^{\nabla}\big([Q,X_k]\odot \nabla_{X_i} X_j \odot \boldX^{\{i,k,j\}} \big)\\
=&\sum_{k=1}^{n} \sum_{i=1}^{n} \sum_{j=1}^{n}\varepsilon \cdot
\pbw^{\nabla}\big( \nabla_{X_i} X_j \odot [Q,X_k] \odot \boldX^{\{i,k,j\}} \big)\\
=&\sum_{i=1}^{n} \sum_{j=1}^{n} \varepsilon \cdot
\pbw^{\nabla}\big( \nabla_{X_i} X_j \odot
\SdifferentialM \boldX^{\{i,j\}} \big)\\
=&\sum_{i=1}^{n} \sum_{k=1}^{n} \varepsilon \cdot
\pbw^{\nabla}\big( \nabla_{X_i} X_k \odot \SdifferentialM \boldX^{\{i,k\}} \big).
\end{align*}

Therefore, it follows that
\begin{align}
&\sum_{k=1}^{n} \sum_{i=1}^{n}\varepsilon\cdot \pbw^{\nabla}
\big([Q,\nabla_{X_i}X_k]\odot \boldX^{\{i,k\}}\big)
+\sum_{k=1}^{n} \sum_{i=1}^{n}\varepsilon \cdot \pbw^{\nabla}
\big( [Q,X_k]\odot \nabla_{X_i}\boldX^{\{i,k\}} \big) \nonumber \\
=&\sum_{k=1}^{n} \sum_{i=1}^{n}\varepsilon\cdot \pbw^{\nabla}
\big([Q,\nabla_{X_i}X_k]\odot \boldX^{\{i,k\}}\big)
+\sum_{i=1}^{n} \sum_{k=1}^{n} \varepsilon \cdot
\pbw^{\nabla}\big( \nabla_{X_i} X_k \odot \SdifferentialM \boldX^{\{i,k\}} \big) \nonumber \\
=&\sum_{i=1}^{n} \sum_{k=1}^{n} \varepsilon\cdot \pbw^{\nabla} \SdifferentialM
\big(\nabla_{X_i}X_k \odot \boldX^{\{i,k\}}\big) \nonumber \\
=&\sum_{i=1}^{n} \varepsilon\cdot \pbw^{\nabla} 
\big( \SdifferentialM (\nabla_{X_i}X^{\{i\}} )\big). \label{eq:CDG}
\end{align}

Moreover,
\begin{equation}\label{eqnC1}
\cC=\sum_{k=1}^{n} \sum_{i=1}^{n} \varepsilon\cdot 
\pbw^{\nabla}\big( (\nabla_{X_i}[Q,X_k])\odot \boldX^{\{i,k\}}\big) +
\sum_{k=1}^{n} \sum_{i=1}^{n} \varepsilon\cdot \pbw^{\nabla}
\big( [Q,X_k]\odot \nabla_{X_i}\boldX^{\{i,k\}} \big)
.\end{equation}

Then by combining Equations~\eqref{eq:SCE}, \eqref{eq:CDG} and~\eqref{eqnC1}
and using the definition of Atiyah cocycles, we obtain
\begin{align}
\cA^{2}+\cC
&=\sum_{k=1}^{n} \sum_{i=1}^{n}\varepsilon\cdot 
\pbw^{\nabla}\big(\big([Q,\nabla_{X_i}X_k]-\atiyahcocycleQ(X_i,X_k)\big)
\odot \boldX^{\{i,k\}}\big) \nonumber \\
&\quad +\sum_{k=1}^{n} \sum_{i=1}^{n} \varepsilon\cdot \pbw^{\nabla} 
\big( [Q,X_k]\odot \nabla_{X_i}\boldX^{\{i,k\}} \big) \nonumber \\
&=\sum_{i=1}^{n} \varepsilon\cdot \pbw^{\nabla} \big (\SdifferentialM
(\nabla_{X_i}X^{\{i\}} )\big)
-\sum_{i<j}\varepsilon\cdot \pbw^{\nabla}
\left(2\atiyahcocycleQ(X_i,X_j)\odot \boldX^{\{i,j\}}\right). \label{eq:NAP}
\end{align}

The conclusion thus follows from Equations~\eqref{eq:ROM}, \eqref{eqnB}, and~\eqref{eq:NAP}.
\end{proof}

\begin{proof}[Proof of Proposition~\ref{formula1}]
Equations~\eqref{formulaA1}, \eqref{formulaA2} and~\eqref{formulaA4} have been proved in
Lemma~\ref{lem:1-3}. It remains to prove Equation~\eqref{formulaA3}.
According to Lemma~\ref{lem:4}, we have
\begin{align*}
\UdifferentialM\circ \pbw^{\nabla}(\boldX) - \pbw^{\nabla}\circ \SdifferentialM(\boldX)
&=\frac{1}{n}\sum_{k=1}^{n}\sign_{k} (-1)^{\degree{X_k}}X_k\cdot (\UdifferentialM\circ 
\pbw^{\nabla}-\pbw^{\nabla}\circ\SdifferentialM)(\boldX^{\{k\}})\\
&\quad -\frac{1}{n}\sum_{k=1}^{n}\sign_{k}
\big( \UdifferentialM\circ \pbw^{\nabla}-\pbw^{\nabla}\circ\SdifferentialM)
(\nabla_{X_k}\boldX^{\{k\}} \big)\\
&\quad -\frac{1}{n}\sum_{i<j}\sign_{i}\sign_{j}(-1)^{\degree{X_{i}}
\cdot\degree{X_{j}}}\pbw^{\nabla}\left(2\atiyahcocycleQ(X_i,X_j)
\odot \boldX^{\{i,j\}}\right).
\end{align*}
This concludes the proof of Proposition~\ref{formula1}.
\end{proof}

\section{Atiyah class and homotopy Lie algebras}

This section is devoted to the study of homotopy Lie algebras
associated to the Atiyah class of dg manifolds.

\subsection{Kapranov \texorpdfstring{$L_\infty[1]$}{L∞[1]} algebras of dg manifolds}

The Atiyah class of a holomorphic vector bundle is closely related to
$L_\infty[1]$ algebras as shown by the pioneer work
of Kapranov \cite{MR1671737,MR2431634,MR2661534}.
These $L_\infty[1]$ algebras play an important role
in derived geometry~\cite{MR2657369,MR2472137,MR2431634}
and construction of Rozansky--Witten
invariants \cite{MR1671737,MR1671725,MR2661534,arXiv:math/0404360,MR3322372}.

In this section, following Kapranov~\cite{MR1671737},
we show that the Atiyah class of a dg manifold
is related to $L_\infty[1]$ algebras in a similar fashion.
We refer to \cite[Sections~4 and~5]{MR4267551} for the interpretation
in terms of derived category.

Let $(\cM,Q)$ be a dg manifold and let $\nabla$ be an
affine connection on $\cM$.
The Lie derivative $\UdifferentialM$
along the homological vector field $Q$ is a degree~$+1$ coderivation of
the dg coalgebra $\cD(\cM)$ over $(C^\infty(\cM),Q)$ according to
Proposition~\ref{pro:2.2}.

Transferring $\UdifferentialM$ from
$\cD(\cM)$ to $\sections{S(T_\cM)}$ by
the graded coalgebra isomorphism $\pbw^\nabla$ \eqref{defnpbw},
we obtain a degree~$+1$
coderivation $\delta^\nabla$ of $\sections{S(T_\cM)}$:
\begin{equation}\label{eq:delta}
\delta^\nabla:=(\pbw^\nabla)^{-1}\circ \UdifferentialM \circ\pbw^\nabla.
\end{equation}

Therefore
\begin{equation}\label{eq:YVR}
\big(\sections{S(T_\cM)}, \delta^\nabla\big)
\end{equation}
is a dg coalgebra over the dg ring $(C^\infty(\cM),Q)$.

Finally, dualizing $\delta^\nabla$ over $(C^\infty(\cM),Q)$, we obtain
a degree $+1$ derivation:
\begin{equation}\label{eq:DD}
D^\nabla:\sections{\widehat{S}(T^\vee_\cM)}\to\sections{\widehat{S}(T^\vee_\cM)}
\end{equation}
Here we used the identification $\sections{\widehat{S}(T^\vee_\cM)}\cong
\Hom_{C^\infty(\cM)}(\sections{S(T_\cM)},C^\infty(\cM))$.

The following theorem was first announced
in \cite{MR3319134}, but a proof was omitted.
We will present a complete proof below.

\begin{theorem}\label{RSPtheorem}
Let $(\cM,Q)$ be a dg manifold, and let $\nabla$ be a
torsion-free affine connection on $\cM$.
\begin{enumerate}
\item[(i)] The operator $D^\nabla$
is a derivation of degree~$+1$ of the graded algebra $\sections{\widehat{S}(T^\vee_\cM)}$
satisfying $(D^\nabla)^2=0$. Thus $\big( \sections{\widehat{S}(T^\vee_\cM)}, D^\nabla \big)$
is a dg algebra.
\item[(ii)] There exists a sequence of degree $+1$ sections
$R_k\in \sections{S^k(T^\vee_\cM) \otimes T_\cM}$, $k\geqslant 2$
whose first term $R_2$ equals to $-\atiyahcocycleQ$,
such that
\[ D^\nabla=\liederivative{Q}+\sum_{k=2}^\infty\widetilde{R_k} ,\]
where each $\widetilde{R_k}:\sections{\widehat{S}(T^\vee_\cM)}
\to\sections{\widehat{S}(T^\vee_\cM)}$
denotes the $\cR$-linear degree $+1$ derivation corresponding to $R_k$.
\item[(iii)] Different choices of torsion-free affine connections $\nabla$ induce
isomorphic dg algebras $\big( \sections{\widehat{S}(T^\vee_\cM)}, D^\nabla \big)$.
\end{enumerate}
\end{theorem}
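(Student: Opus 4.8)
The plan is to reduce everything to the coderivation $\delta^\nabla$ of~\eqref{eq:delta}, since $D^\nabla$ is by construction its $\cR$-linear dual~\eqref{eq:DD}. The key structural input is the standard correspondence for the cofree cocommutative $\cR$-coalgebra $\sections{S(\tangent{\cM})}$: a coderivation is determined by its corestrictions $R_k:=\pr_{\tangent{\cM}}\circ\delta^\nabla|_{\sections{S^k(\tangent{\cM})}}\colon\sections{S^k(\tangent{\cM})}\to\sections{\tangent{\cM}}$, while, dually, a derivation of $\sections{\widehat{S}(\tangent{\cM}\dual)}$ is determined by its restriction to the generators $\tangent{\cM}\dual$; under these two dictionaries the coderivation built from a single corestriction $R_k$ is dual to precisely the derivation $\widetilde{R_k}$ attached to $R_k\in\sections{S^k(\tangent{\cM}\dual)\otimes\tangent{\cM}}$. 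I would first record this correspondence --- including the filtration/completion bookkeeping that makes the infinite sum $\sum_k\widetilde{R_k}$ converge in the $\cI$-adic topology --- as a preliminary lemma.

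For part~(i), that $D^\nabla$ is a degree $+1$ derivation is immediate from the coderivation-to-derivation duality. For $(D^\nabla)^2=0$ I would show $(\delta^\nabla)^2=0$ and transport it through the duality: since $\delta^\nabla=(\pbw^\nabla)\inv\circ\UdifferentialM\circ\pbw^\nabla$ is conjugate to $\UdifferentialM=\llbracket Q,\argument\rrbracket$, and the latter squares to zero because $[Q,Q]=0$ forces $Q\circ Q=0$ (graded Jacobi), we obtain $(\delta^\nabla)^2=0$, and hence $(D^\nabla)^2=\big((\delta^\nabla)^2\big)\dual=0$ by contravariance of dualization.

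For part~(ii), the heart is to feed Proposition~\ref{formula1} into the corestriction dictionary. Rewriting~\eqref{eq:C} as $\delta^\nabla=\SdifferentialM+(\pbw^\nabla)\inv\circ\pbwcommutator$, I observe that the Lie-derivative coderivation $\SdifferentialM$ (Proposition~\ref{pro:2.3}) contributes only the unary corestriction $\liederivative{Q}$ together with the ground-ring differential $d_\cR=Q$, so all higher corestrictions come from $(\pbw^\nabla)\inv\circ\pbwcommutator$. A short computation --- using $\cR$-linearity of $\pbw^\nabla$ and the Leibniz rule for both $\UdifferentialM$ and $\SdifferentialM$, whose $Q(f)$-terms cancel --- shows $\pbwcommutator(f\cdot s)=(-1)^{\degree{f}}f\cdot\pbwcommutator(s)$, i.e.\ $\pbwcommutator$ is $\cR$-linear; hence each $R_k$ with $k\geq 2$ is tensorial, a genuine section of $S^k(\tangent{\cM}\dual)\otimes\tangent{\cM}$. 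By~\eqref{formulaA2} the unary corestriction of $(\pbw^\nabla)\inv\circ\pbwcommutator$ vanishes, and since $\pbw^\nabla$ restricts to the identity on $\sections{\tangent{\cM}}$, Equation~\eqref{formulaA4} gives $R_2(X\odot Y)=\pbwcommutator(X\odot Y)=-\atiyahcocycleQ(X,Y)$, so $R_2=-\atiyahcocycleQ$. Dualizing the resulting decomposition $\delta^\nabla=\SdifferentialM+\sum_{k\geq 2}\delta^\nabla_k$ then yields $D^\nabla=\liederivative{Q}+\sum_{k\geq 2}\widetilde{R_k}$.

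For part~(iii), given two torsion-free connections $\nabla,\nabla'$ I would set $\Phi:=(\pbw^{\nabla'})\inv\circ\pbw^\nabla$, a coalgebra automorphism of $\sections{S(\tangent{\cM})}$ over $\cR$ by Theorem~\ref{thm:pbw}. A one-line cancellation gives $\delta^{\nabla'}\circ\Phi=(\pbw^{\nabla'})\inv\circ\UdifferentialM\circ\pbw^\nabla=\Phi\circ\delta^\nabla$, so $\Phi$ is an isomorphism of dg coalgebras $\big(\sections{S(\tangent{\cM})},\delta^\nabla\big)\to\big(\sections{S(\tangent{\cM})},\delta^{\nabla'}\big)$, and its dual $\Phi\dual$ is a canonical isomorphism of dg algebras $\big(\sections{\widehat{S}(\tangent{\cM}\dual)},D^{\nabla'}\big)\to\big(\sections{\widehat{S}(\tangent{\cM}\dual)},D^\nabla\big)$. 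I expect the main obstacle to be the careful formulation of the coderivation/derivation correspondence over the dg ring $(\cR,d_\cR)$ in the completed setting --- isolating the ground-ring differential from the tensorial corestrictions and verifying that dualization exchanges the coderivation extension of $R_k$ with $\widetilde{R_k}$ while respecting $\cI$-adic convergence. Once that dictionary is in place, parts~(i)--(iii) are essentially formal consequences of Proposition~\ref{formula1} and the $\cR$-linearity of $\pbwcommutator$.
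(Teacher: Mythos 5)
Your proposal is correct and follows essentially the same route as the paper's proof: the coderivation/derivation dictionary for $S_{\cR}(V)$ (the paper's Lemma~\ref{coderivation} and Equation~\eqref{eq:CDW}), the $\cR$-linearity of $\pbwcommutator$ together with Proposition~\ref{formula1} to identify $R_0=R_1=0$ and $R_2=-\atiyahcocycleQ$, and the conjugation $\phi=(\pbw^{\nabla'})\inv\circ\pbw^{\nabla}$ followed by dualization for part~(iii). The only cosmetic difference is that you isolate the corestriction bookkeeping as a separate preliminary lemma, which the paper handles inline via the convolution-product notation $\bar{R}_k\star\id$.
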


\begin{remark}
The graded algebra $\sections{\widehat{S}(\tangent{\cM}^{\vee})}$
can be thought of as the graded algebra of functions
on a graded manifold $\widetilde{T}_{\cM}$ with support $M$
and $D^{\nabla}$ as a homological vector field on $\widetilde{T}_{\cM}$.
Note that $\tangent{\cM}$ and $\widetilde{T}_{\cM}$ are different graded manifolds:
the support of $\tangent{\cM}$ is $\tangent{M}$
while the support of $\widetilde{T}_{\cM}$ is $M$.
\end{remark}

Before we prove this theorem, we need to recall some
basic notations.

Recall that given a graded commutative algebra $\cR$ and a graded $\cR$-module $V$,
the symmetric tensor algebra $(S_{\cR}(V),\mu)$ over $\cR$
admits a canonical graded coalgebra structure
$\Delta:S_{\cR}(V)\to S_{\cR}(V)\otimes_{\cR}S_{\cR}(V)$
defined by \cite{MR4271478}
\begin{align*}
\Delta(v_{1}\odot\cdots \odot v_n)&=1\otimes (v_{1}\odot 
\cdots \odot v_{n})+(v_{1}\odot \cdots \odot v_{n})\otimes 1\\
&\quad +\sum_{k=1}^{n-1}\sum_{\sigma\in \shuffle{k}{n-k}} 
\sign \cdot (v_{\sigma(1)}\odot \cdots \odot v_{\sigma(k)}) 
\otimes (v_{\sigma(k+1)}\odot \cdots \odot v_{\sigma(n)})
\end{align*}
for homogeneous elements $v_1,\cdots, v_{n}\in V$.
Here the symbol $\sign=\sign(v_1,v_2,\cdots,v_n)$
denotes the Koszul signs arising from the reordering of the homogeneous
objects $v_1,v_2,\cdots,v_n$ in each term of the right hand side.

The following lemma is standard--- see, for example, \cite{MR3353026,MR4271478}.
\begin{lemma}\label{coderivation}
Let $\cR$ be a graded commutative algebra and $V$ be an $\cR$-module.
There is a natural isomorphism
\[ \coDer_{\cR}({S}_{\cR}(V), {S}_{\cR}(V))\xrightarrow{\simeq}
\prod_{k=0}^{\infty} \Hom_{\cR}({S}_{\cR}^{k}(V),V)\]
as $\cR$-modules.

More explicitly, the correspondence between
a sequence of maps $\{q_k\}_{k\geq 0}$
with $q_k\in \Hom_{\cR}(S_{\cR}^{k}(V),V)$ and a
coderivation $Q\in \coDer_{\cR}({S}_{\cR}(V),{S}_{\cR}(V))$
is given by
\begin{equation}\label{eq:coder}
\begin{aligned}
Q(v_1\odot \cdots \odot v_n)&=q_{0}(1)\odot v_{1}\odot \cdots \odot v_{n} 
+ q_{n}(v_{1}\odot \cdots \odot v_{n})\odot 1\\
&\quad +\sum_{k=1}^{n-1}\sum_{\sigma\in \shuffle{k}{n-k}}\sign \cdot q_k(v_{\sigma(1)}
\odot\cdots\odot v_{\sigma(k)})\odot v_{\sigma(k+1)}\odot\cdots\odot v_{\sigma(n)}
\end{aligned}
\end{equation}
for homogeneous vectors $v_1,\cdots,v_n\in V$.
\end{lemma}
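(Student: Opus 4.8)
The plan is to realize the map in the statement as \emph{corestriction to the cogenerators} and to recognize formula~\eqref{eq:coder} as its explicit inverse. Write $\pi\colon S_\cR(V)\to S^1_\cR(V)=V$ for the projection onto the cogenerators and $\iota_k\colon S^k_\cR(V)\hookrightarrow S_\cR(V)$ for the inclusion of the $k$-th symmetric power. The isomorphism will send a coderivation $Q$ to the family $q_k:=\pi\circ Q\circ\iota_k$, and the inverse will be $\{q_k\}\mapsto Q$ given by~\eqref{eq:coder}. Both assignments are manifestly $\cR$-linear, so once they are shown to be mutually inverse the lemma follows. Thus it remains to establish two things: (a)~formula~\eqref{eq:coder} really does produce a coderivation, and (b)~a coderivation is uniquely determined by its corestriction $\pi\circ Q$.

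For (a), given $\{q_k\}$ I would define $Q$ by~\eqref{eq:coder} and verify the identity $\Delta\circ Q=(\id\otimes Q+Q\otimes\id)\circ\Delta$ by evaluating both sides on $v_1\odot\cdots\odot v_n$. After expanding the comultiplication~\eqref{eq:3} and the formula~\eqref{eq:coder}, each side becomes a sum indexed by a choice of the factors on which some $q_k$ acts together with a shuffle splitting of the remaining factors into the two tensor slots. One then matches these index sets term by term and checks that the Koszul signs $\sign$ agree; organizing the terms of $(\id\otimes Q+Q\otimes\id)\Delta$ according to whether the active $q_k$ lands in the left or the right slot makes the correspondence transparent. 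This sign-and-shuffle bookkeeping is the computational heart of the proof, and it is the one step I expect to be genuinely laborious, though it is entirely routine and is exactly the cocommutative analogue of the standard verification for the tensor coalgebra.

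For (b), it suffices to show that a coderivation $D$ with $\pi\circ D=0$ must vanish, since applying this to $D=Q-Q'$, where $Q'$ is built from $q_k:=\pi\circ Q\circ\iota_k$ via~\eqref{eq:coder} (a coderivation by~(a)), forces $Q=Q'$. I would argue by induction on the symmetric filtration degree $n$ of the argument, using two structural facts. First, an element of $S_\cR(V)$ is primitive precisely when it lies in $V=S^1_\cR(V)$. Second, feeding the coderivation identity through $\epsilon\otimes\epsilon$ gives $\epsilon\circ D=2\,\epsilon\circ D$, hence $\epsilon\circ D=0$, so no output has an $S^0$-component. In the inductive step one applies $\pi\otimes\id$ to the reduced coproduct of $D(v_1\odot\cdots\odot v_n)$; the coderivation identity rewrites the result in terms of $\pi\circ D$, which vanishes by hypothesis, and of $D$ evaluated on strictly shorter monomials, which vanishes by induction. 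Consequently the $S^{\geq 2}$-components of $D(v_1\odot\cdots\odot v_n)$ all vanish by injectivity of $(\pi\otimes\id)\circ\overline{\Delta}$ on $S^m_\cR(V)$ for $m\geq 2$, the $S^1$-component is killed by $\pi\circ D=0$, and the $S^0$-component by $\epsilon\circ D=0$.

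The only delicate point is therefore the sign matching in step~(a); everything else is structural. I note that the injectivity of $(\pi\otimes\id)\circ\overline{\Delta}$ invoked in step~(b) — equivalently, that the primitives are exactly $V$ — relies on being able to divide by integers, which is supplied silently by the hypothesis $\KK=\RR$ or $\CC$.
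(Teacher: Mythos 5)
Your proposal is sound. The paper itself offers no proof of this lemma --- it is dismissed as ``standard'' with a pointer to \cite{MR3353026,arXiv:1408.2903} --- so there is nothing internal to compare against; what you give is precisely the standard cofreeness argument those references use: corestriction $Q\mapsto\{\pi\circ Q\circ\iota_k\}$ in one direction, formula~\eqref{eq:coder} in the other, with the two verifications (the shuffle--sign check that \eqref{eq:coder} defines a coderivation, and the vanishing of a coderivation killed by $\pi$, via $\epsilon\circ D=0$ and induction on the symmetric filtration). The only step you do not actually execute is the sign-and-shuffle bookkeeping in~(a); you describe the correct term-by-term bijection (a subset on which $q_k$ acts, plus a shuffle splitting of the complement, matched against whether the active $q_k$ lands in the left or right tensor slot), so this is a presentational omission rather than a gap. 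You are also right to flag that the injectivity of $(\pi\otimes\id)\circ\overline{\Delta}$ on $S^{m}_{\cR}(V)$ for $m\geq 2$ (equivalently, that the primitives are exactly $V$) uses divisibility by integers, which the standing hypothesis $\KK=\RR$ or $\CC$ supplies.
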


For a given graded $\cR$-coalgebra $(C, \Delta)$ and a graded $\cR$-algebra
$(A,\mu)$, the convolution product $\star$ on the graded vector space
$\Hom_\cR (C,A)$ is defined by
\[f\star g = \mu \circ (f\otimes g) \circ \Delta\]
$ \forall f,g \in \Hom_{\cR}(C,A)$. It is clear that
$(\Hom_\cR (C,A), \star)$ is a graded $\cR$-algebra.
In particular, since $S_{\cR}(V)$ is both a graded coalgebra and
a graded algebra, the space of $\cR$-linear maps 
$\Hom_{\cR}\left(S_{\cR}(V), S_{\cR}(V) \right)$ admits a convolution product:
\begin{equation}\label{eq:convolutionproduct}
(f\star g)(\mathbf{v})=\sum_{(\mathbf{v})}(-1)^{\degree{g}
\cdot \degree{\mathbf{v}_{(1)}}} f(\mathbf{v}_{(1)})\odot g(\mathbf{v}_{(2)}),
\end{equation}
where $\mathbf{v}\in S_{\cR}(V)$ and $\Delta(\mathbf{v})
=\sum\limits_{(\mathbf{v})}\mathbf{v}_{(1)}\otimes \mathbf{v}_{(2)}$.

Using the above notation \eqref{eq:convolutionproduct},
we may write Equation~\eqref{eq:coder} as
\begin{equation}\label{eq:CDW}
Q=\sum_{k=0}^{\infty}\left ( \bar{q}_k \star \id_{S_{\cR}(V)}\right ),
\end{equation}
where the map $\bar{q}_{k}:S_{\cR}(V)\to S_{\cR}(V)$
is defined by the following commutative diagram:
\begin{equation}\label{diagram:bar}
\begin{tikzcd}
S_{\cR}(V)\arrow[d, two heads, "\pr_{k}"] \arrow[r, "\bar{q}_{k}"] & S_{\cR}(V)\\
S^{k}_{\cR}(V) \arrow[r, "q_{k}"] &S^{1}_{\cR}(V) \arrow[u, hook].
\end{tikzcd}
\end{equation}
Here $\pr_{k}:S_{\cR}(V)\to S^{k}_{\cR}(V)$ denotes
the canonical projection. We write $\id$ for $\id_{S_{\cR}(V)}$
below if there is no confusion.
We are now ready to give a detailed proof of Theorem~\ref{RSPtheorem}.

\begin{proof}[Proof of Theorem~\ref{RSPtheorem}]
For (i), by construction, it is clear that the operator
$D^\nabla$ in~\eqref{eq:DD} is indeed a degree $+1$ derivation.
Since $Q$ is a homological vector field,
from~\eqref{eq:delta}, it follows that $(\delta^\nabla)^2=0$.	
Therefore $(D^\nabla)^2=0$.

To prove (ii),
consider the case when
$\cR=\smooth{\cM}$ and $V=\sections{\tangent{\cM}}$ in Lemma~\ref{coderivation}.
Recall that $\pbwcommutator$ in~\eqref{eq:C} is $\cR$-linear,
and $\pbw^{\nabla}:\sections{S(\tangent{\cM})}\to\cD(\cM)$ is an
isomorphism of graded coalgebras over $\cR$. Since $\SdifferentialM
\in \coDer_\cR (\sections{S(\tangent{\cM})})$ and $\UdifferentialM \in
\coDer_\cR(\cD(\cM))$, it thus follows that
\[(\pbw^{\nabla})^{-1}\circ \pbwcommutator
=(\pbw^{\nabla})^{-1}\circ \UdifferentialM \circ \pbw^{\nabla}-
\SdifferentialM \in \coDer_{\cR}(\sections{S(\tangent{\cM})}).\]
Since both $ \UdifferentialM $ and $\SdifferentialM$ are of
degree $+1$ and $\pbw^{\nabla}$ is of degree $0$, it follows from
Lemma~\ref{coderivation} and Equation~\eqref{eq:CDW}
that there exists a sequence of degree $+1$ sections
$R_k\in \sections{S^k(T^\vee_\cM) \otimes T_\cM}$, $k\geqslant 0$,
such that
\begin{equation}\label{eq:ORLY}
(\pbw^{\nabla})^{-1}\circ \UdifferentialM \circ \pbw^{\nabla}-\SdifferentialM =
\sum_{k=0}^\infty (\bar{R}_k \star \id) .
\end{equation}
Here we think of
$R_{k}$ as an $\cR$-linear map
$R_{k}:\sections{S^{k}(\tangent{\cM})}\to\sections{\tangent{\cM}}$
and $\bar{R}_{k}:\sections{S(\tangent{\cM})}\to\sections{S(\tangent{\cM})}$
defined as in Diagram~\eqref{diagram:bar}.

From Equations~\eqref{formulaA1},
\eqref{formulaA2} and~\eqref{formulaA4}, it follows
that
\begin{equation}\label{firstTerms}
R_0=0, \quad R_1=0, \quad \text{and}\quad R_2 =-\atiyahcocycleQ.
\end{equation}
Thus the conclusion follows immediately from~\eqref{eq:ORLY}
by taking its $\cR$-dual.

Finally, assume that $\nabla'$ is another torsion-free affine connection.
Let $\phi:=(\pbw^{\nabla'})^{-1}\circ \pbw^{\nabla}$. Then from
Proposition~\ref{pro:2.2}, Proposition~\ref{pro:2.3} and
Theorem~\ref{thm:pbw}, it follows that
\begin{equation}\label{eq:SHA}
\phi:\big(\sections{S(\tangent{\cM})},\delta^\nabla\big)
\xto{\cong}\big(\sections{S(\tangent{\cM})},\delta^{\nabla'}\big)
\end{equation}
is an isomorphism of dg coalgebras over $(C^\infty (\cM), Q)$.
By dualizing it over the dg algebra $(C^\infty (\cM), Q)$,
we have that
\[ \phi^T: \big( \sections{\widehat{S}(T^\vee_\cM)}, D^{\nabla'} \big)
\xto{\cong} \big( \sections{\widehat{S}(T^\vee_\cM)}, D^\nabla \big) \]
is an isomorphism of dg algebras over $(C^\infty (\cM), Q)$.
This concludes the proof of the theorem.
\end{proof}

Indeed, following Kapranov \cite{MR1671737}, one may consider
$\big( \sections{\widehat{S}(T^\vee_\cM)}, D^\nabla \big)$
as the `dg algebra of functions' on the `formal neighborhood'
of the diagonal $\Delta$ of the product dg manifold
$\big(\cM\times\cM,(Q,Q)\big)$:
the PBW map $\pbw^\nabla$ is, by construction,
a formal exponential map identifying a `formal neighborhood' of the zero section of
$T_\cM$ to a `formal neighborhood' of the diagonal
of $\cM\times\cM$ as $\ZZ$-graded manifolds
and Equation~\eqref{eq:delta} asserts that $D^\nabla$ is the homological vector field
obtained on $T_\cM$ by pullback of the vector field $(Q,Q)$ on $\cM\times\cM$
through this formal exponential map.
The readers are invited to compare Theorem~\ref{RSPtheorem}
with \cite[Theorem~2.8.2]{MR1671737}.

As an immediate consequence, we are ready to prove the main result of
this section.

\begin{theorem}\label{cor:main}
Let $(\cM,Q)$ be a dg manifold.
Each choice of an affine connection $\nabla$ on $\cM$
determines an $L_\infty[1]$ algebra structure
on the space of vector fields $\XX(\cM)$.
While the unary bracket
$\lambda_1:S^{1}\big(\XX(\cM)\big)\to\XX(\cM)$
is the Lie derivative $\liederivative{Q}$ along the homological vector field,
the higher multibrackets
$\lambda_{k}:S^{k}\big(\XX(\cM)\big)\to\XX(\cM)$,
with $k\geq 2$, arise as the composition
\[ \lambda_{k}: S^{k}\big(\XX(\cM)\big)\to\sections{S^{k}(\tangent{\cM})}\xto{R_{k}} \XX(\cM)\]
induced by a family of sections $\{R_k\}_{k\geq 2}$ of the vector bundles
$S^k(T^\vee_\cM)\otimes T_\cM$ starting with $R_2=-\atiyahcocycleQ$.

Furthermore, the $L_{\infty}[1]$ algebra structures on $\XX(\cM)$
arising from different choices of affine 
connections are all isomorphic.
\end{theorem}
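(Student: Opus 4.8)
The plan is to read off the $L_\infty[1]$ algebra directly from the square-zero coderivation produced by Theorem~\ref{RSPtheorem}, the only real work being to translate between the $\cR$-linear world of the coalgebra $\sections{S(\tangent{\cM})}$ and the $\KK$-linear higher Jacobi identities on $\XX(\cM)$. First I would record, from~\eqref{eq:delta} and~\eqref{eq:ORLY}, the decomposition
$\delta^\nabla = \SdifferentialM + \sum_{k\geq2}(\bar R_k\star\id)$,
in which the maps $R_k\colon\sections{S^k(\tangent{\cM})}\to\XX(\cM)$ are $\cR$-linear of degree $+1$ for $k\geq2$ (with $R_2=-\atiyahcocycleQ$), while the leading term $\SdifferentialM=\overline{\liederivative{Q}}\star\id$ has as its unary coefficient the operator $\liederivative{Q}=[Q,\argument]$, which is only a derivation over the dg ring $(\cR,Q)$. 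I would then define $\lambda_1:=\liederivative{Q}$ and, for $k\geq2$, $\lambda_k$ as the composite $S^k(\XX(\cM))\to\sections{S^k(\tangent{\cM})}\xto{R_k}\XX(\cM)$ of the canonical map with $R_k$, exactly as in the statement.

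The core of the proof is the standard dictionary, underpinned by Lemma~\ref{coderivation} and the convolution formula~\eqref{eq:CDW}, between square-zero degree $+1$ coderivations and $L_\infty[1]$ structures: I must show that $(\delta^\nabla)^2=0$ is equivalent to the $L_\infty[1]$ relations for $\{\lambda_k\}$. The key preliminary observation is that, although $\delta^\nabla$ is not $\cR$-linear, its square \emph{is}. Writing $\delta^\nabla=\SdifferentialM+\Phi$ with $\Phi=\sum_{k\geq2}(\bar R_k\star\id)$ the $\cR$-linear part, one has $\SdifferentialM^2=0$ because $[Q,Q]=0$, the term $\Phi^2$ is manifestly $\cR$-linear, and the cross term $[\SdifferentialM,\Phi]$ is $\cR$-linear because the $Q(f)$-contributions generated by the two orders of composition cancel. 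Hence $(\delta^\nabla)^2$ is an $\cR$-linear coderivation, and by Lemma~\ref{coderivation} it vanishes if and only if each of its $\cR$-linear Taylor coefficients $\pr_1\circ(\delta^\nabla)^2\big|_{\sections{S^n(\tangent{\cM})}}$ does. Expanding this $n$-th coefficient through~\eqref{eq:CDW} and evaluating on a decomposable section $X_1\odot\cdots\odot X_n$ reproduces precisely the left-hand side of the $n$-th $L_\infty[1]$ relation for $\{\lambda_k\}$ on $X_1,\dots,X_n$ — the terms involving the unary coefficient encoding the compatibility of $\liederivative{Q}$ with the tensorial brackets $\lambda_{\geq2}$. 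Since decomposable sections generate $\sections{S^n(\tangent{\cM})}$ over $\cR$ and the coefficient is $\cR$-linear, its vanishing is equivalent to the $n$-th $L_\infty[1]$ relation holding identically on $\XX(\cM)$. As Theorem~\ref{RSPtheorem} supplies $(\delta^\nabla)^2=0$, this yields the desired $L_\infty[1]$ algebra, with $\lambda_1=\liederivative{Q}$ and $R_2=-\atiyahcocycleQ$.

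For the final assertion I would invoke~\eqref{eq:SHA}: given two torsion-free connections $\nabla,\nabla'$, the map $\phi:=(\pbw^{\nabla'})^{-1}\circ\pbw^\nabla$ is an isomorphism of dg coalgebras over $(\cR,Q)$ intertwining $\delta^\nabla$ and $\delta^{\nabla'}$. Because each PBW map is an $\cR$-coalgebra isomorphism by Theorem~\ref{thm:pbw} and restricts to the identity on $\sections{S^{\leq1}(\tangent{\cM})}$ by~\eqref{eqnpbw}, the map $\phi$ is $\cR$-linear, filtration-preserving, and equal to the identity in symmetric degrees $\leq1$, so its linear Taylor coefficient is $\id_{\XX(\cM)}$. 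Under the same dictionary, such a coalgebra isomorphism is exactly an isomorphism of the two $L_\infty[1]$ algebras whose higher components are the $\cR$-linear maps $\phi_k\colon\sections{S^k(\tangent{\cM})}\to\XX(\cM)$ determined by $\phi$; as its linear part is the identity and it depends only on the ordered pair $(\nabla,\nabla')$, the isomorphism is canonical.

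The step I expect to be the main obstacle is the passage between the $\cR$-linear coalgebra and the genuinely $\KK$-linear $L_\infty[1]$ identities. Since $\lambda_1=\liederivative{Q}$ is a first-order operator rather than a tensor, the canonical map $S^k(\XX(\cM))\to\sections{S^k(\tangent{\cM})}$ fails to be injective, so one cannot naively transport relations along it; the decisive point that makes the argument go through is that $(\delta^\nabla)^2$ is nonetheless $\cR$-linear, which is what licenses reading off the $L_\infty[1]$ relations from its Taylor coefficients and reconciles the differential-operator nature of the unary bracket with the $\cR$-multilinearity of the higher brackets.
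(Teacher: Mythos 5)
Your proposal is correct and follows essentially the same route as the paper: the $L_\infty[1]$ structure is read off from the square-zero coderivation $\delta^\nabla$ of the dg coalgebra $\big(\sections{S(T_\cM)},\delta^\nabla\big)$ supplied by Theorem~\ref{RSPtheorem}, and the canonical isomorphism comes from $\phi=(\pbw^{\nabla'})^{-1}\circ\pbw^{\nabla}$ as in~\eqref{eq:SHA}, whose linear part is the identity. The only difference is that you spell out the translation between the $\cR$-linear coalgebra and the $\KK$-linear higher Jacobi identities (via the $\cR$-linearity of $(\delta^\nabla)^2$), a step the paper compresses into ``follows immediately.''
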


For clarity, we point out that $S^{k}\big(\XX(\cM)\big)$ 
denotes the symmetric tensor product \emph{over the field $\KK$} of $k$ copies of $\XX(\cM)$.
While $\lambda_1$ is merely a $\KK$-linear endomorphism of $\XX(\cM)$,
we note that, for all $k\geq 2$,
the multibracket $\lambda_{k}$ is $C^\infty(\cM)$-linear in each of its $k$ arguments.

\begin{proof}
The first part follows immediately from the fact that
$\big(\sections{S(T_\cM)}, \delta^\nabla\big)$ as in
\eqref{eq:YVR}
is a dg coalgebra over $(C^\infty (\cM), Q)$.

The uniqueness is a direct consequence of Theorem~\ref{RSPtheorem} as well.
Indeed, it is easier to derive it using the dg coalgebra
$\big(\sections{S(T_\cM)},\delta^\nabla\big)$ as in~\eqref{eq:YVR}.
If $\nabla'$ is another torsion-free affine connection on $\cM$,
we know that $\phi:\big(\sections{S(\tangent{\cM})},\delta^\nabla\big)
\xto{\cong}\big(\sections{S(\tangent{\cM})},\delta^{\nabla'}\big)$
as in~\eqref{eq:SHA}
is an isomorphism of dg coalgebras over the $(C^\infty(\cM),Q)$.
Thus it follows that the sequence of maps $\{\phi_k\}_{k\geq 1}$
defined by the composition
\[ \phi_k:S^{k}\left(\XX(\cM)\right) \to\sections{S^{k}(\tangent{\cM})} 	
\xto{\phi}\sections{S(\tangent{\cM})} \xto{\pr_1}\sections{T_\cM} =\XX(\cM) \]
is an isomorphism of $L_\infty[1]$ algebras.
Indeed, it is simple to see from~\eqref{eqnpbw} 
that the linear term $\phi_1$ is the identity map.
\end{proof}

Such an $L_{\infty}[1]$ algebra on $\XX(\cM)$ is called the
\bfemph{Kapranov $L_{\infty}[1]$ algebra} of the dg
manifold $(\cM, Q)$.

\subsection{Recursive formula for multibrackets}

It is clear that the Kapranov $L_{\infty}[1]$ algebra of a dg manifold
in Theorem~\ref{cor:main} is
completely determined by the Atiyah $1$-cocycle and
\[ R_{k}\in\sections{S^{k}(\tangent{\cM}^{\vee})\otimes\tangent{\cM}}
\cong\sections{\Hom(S^{k}(\tangent{\cM}),\tangent{\cM})} \]
for $k\geq 3$.

Recall that, for the $L_\infty[1]$ algebra
on the Dolbeault complex $\OO^{0,\bullet}(T\oz_X)$ associated
to the Atiyah class of the tangent bundle $T_X$ of a Kähler manifold $X$,
Kapranov showed that the multibrackets can be described explicitly
by a very simple formula: \eqref{eq:Rk1}.
For a general complex manifold, it was
proved in \cite{MR4271478} that they can be computed recursively
as well.
It is thus natural to ask if one can describe the multibrackets
in Theorem~\ref{cor:main} explicitly.

In what follows, we will
give a characterization of these multibrackets,
or equivalently all terms $R_{k}$, $k\geq 2$,
by showing that they are
completely determined by the Atiyah cocycle $\atiyahcocycleQ$,
the curvature $\curvature$,
and their higher covariant derivatives, by a recursive formula.

We need to introduce some notations first.

By $\widetilde{\coder}R_{n-1}\in\sections{S^{n}(\tangent{\cM}^{\vee})\otimes\tangent{\cM}}$,
we denote the symmetrized covariant derivative of $R_{n-1}$.
That is, for any $\boldX\in\sections{S^{n}(\tangent{\cM})}$,
\begin{align}\label{eq:symcovariantderivative}
\left(\widetilde{\coder}R_{n-1}\right)(\boldX)
&=\sum_{k=1}^{n}\sign_{k}
\big(\nabla_{X_k}R_{n-1}\big)(\boldX^{\{k\}}) \nonumber \\
&=\sum_{k=1}^{n}\sign_{k}
\left((-1)^{\degree{X_k}} \nabla_{X_k} 
\big(R_{n-1}(\boldX^{\{k\}})\big)-R_{n-1}
\big(\nabla_{X_k}\boldX^{\{k\}}\big)\right)
.\end{align}
Here $\sign_{k}=(-1)^{\degree{X_{k}}(\degree{X_{1}}+\cdots +\degree{X_{k-1}})}$ is the Koszul sign.

Let $\Bmap:\sections{\tangent{\cM}\otimes S(\tangent{\cM})}
\to\sections{S(\tangent{\cM})}$ be the map defined by
\begin{equation}\label{Bmap}
\Bmap(Y;\boldX)=(\pbw^{\nabla})^{-1}\left(Y\cdot\pbw^{\nabla}(\boldX)\right)-\nabla_{Y}\boldX,
\end{equation}
$\forall Y\in\XX(\cM)$ and $\boldX\in\sections{S^{n}(\tangent{\cM})}$.
The following can be verified directly.

\begin{lemma}\label{lem:SCE}
The map $\Bmap$ is well defined and ${\cR}$-linear.
Hence $\Bmap$ is indeed a bundle map
\[ \Bmap:\tangent{\cM}\otimes S(\tangent{\cM})\to S(\tangent{\cM}) .\]
\end{lemma}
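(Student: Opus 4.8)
The plan is to prove the two assertions at once by showing that $\Bmap$ is $\cR$-bilinear. An $\cR$-bilinear map $\sections{\tangent{\cM}}\times\sections{S(\tangent{\cM})}\to\sections{S(\tangent{\cM})}$ is precisely the datum of a bundle map $\tangent{\cM}\otimes S(\tangent{\cM})\to S(\tangent{\cM})$, so $\cR$-bilinearity simultaneously guarantees that the formula \eqref{Bmap} descends to the tensor product (``well defined'') and that it is a bundle map. That $\Bmap$ is $\KK$-bilinear is immediate from \eqref{Bmap}, since $\pbw^{\nabla}$, its inverse, operator composition, and $\nabla$ are all $\KK$-linear; thus everything reduces to checking $\cR$-linearity in each of the two slots.

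For the first slot I expect no difficulty. For $f\in\cR=\smooth{\cM}$ and $Y\in\XX(\cM)$, the operator $fY\in\cD(\cM)$ satisfies $(fY)\cdot D=f\,(Y\cdot D)$ for any $D\in\cD(\cM)$; since $(\pbw^{\nabla})\inv$ is $\cR$-linear by Theorem~\ref{thm:pbw} and $\nabla_{fY}\boldX=f\,\nabla_{Y}\boldX$ by the first connection axiom, one reads off $\Bmap(fY;\boldX)=f\,\Bmap(Y;\boldX)$ directly.

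The real content is $\cR$-linearity in the second slot, and this is where I would concentrate. Let $f\in\cR$ and $\boldX\in\sections{S^{n}(\tangent{\cM})}$. Using that $\pbw^{\nabla}$ is $\cR$-linear I would write $\pbw^{\nabla}(f\boldX)=f\,\pbw^{\nabla}(\boldX)$ and then expand the composition by the graded Leibniz rule for $Y$ acting on the operator $f\,\pbw^{\nabla}(\boldX)$:
\[ Y\cdot\big(f\,\pbw^{\nabla}(\boldX)\big)=Y(f)\,\pbw^{\nabla}(\boldX)+(-1)^{\degree{Y}\degree{f}}f\,\big(Y\cdot\pbw^{\nabla}(\boldX)\big). \]
Applying the $\cR$-linear map $(\pbw^{\nabla})\inv$ yields a term $Y(f)\,\boldX$ together with $(-1)^{\degree{Y}\degree{f}}f\,(\pbw^{\nabla})\inv\big(Y\cdot\pbw^{\nabla}(\boldX)\big)$. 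On the other hand the second connection axiom gives $\nabla_{Y}(f\boldX)=Y(f)\,\boldX+(-1)^{\degree{Y}\degree{f}}f\,\nabla_{Y}\boldX$. Subtracting the two expressions, the ``derivative-of-$f$'' terms $Y(f)\,\boldX$ cancel exactly, leaving $\Bmap(Y;f\boldX)=(-1)^{\degree{Y}\degree{f}}f\,\Bmap(Y;\boldX)$, which is graded $\cR$-linearity.

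The one step that requires care — and the crux of the lemma — is exactly this cancellation: the subtraction of $\nabla_{Y}\boldX$ in \eqref{Bmap} is there precisely to absorb the first-order (Leibniz) part, in the variable $f$, of $(\pbw^{\nabla})\inv\big(Y\cdot\pbw^{\nabla}(\boldX)\big)$, so that the tensorial remainder survives. Once $\cR$-bilinearity is established, I would close by observing that both $(\pbw^{\nabla})\inv\big(Y\cdot\pbw^{\nabla}(\boldX)\big)$ and $\nabla_{Y}\boldX$ manifestly lie in $\sections{S(\tangent{\cM})}$, hence so does their difference, and conclude that $\Bmap$ defines the asserted bundle map $\tangent{\cM}\otimes S(\tangent{\cM})\to S(\tangent{\cM})$.
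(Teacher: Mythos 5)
Your proof is correct and is precisely the direct verification the paper leaves to the reader (the paper states only that the lemma ``can be verified directly'' and gives no argument). The key point you identify --- that the subtracted term $\nabla_{Y}\boldX$ cancels the Leibniz term $Y(f)\,\boldX$ produced by composing $Y$ with $f\,\pbw^{\nabla}(\boldX)$, leaving the graded sign $(-1)^{\degree{Y}\degree{f}}$ that makes the pairing balanced over $\cR$ --- is exactly the content of the lemma.
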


As we will see below,
the map $ \Bmap$ is completely determined by the
curvature $\curvature$ and its higher covariant derivatives.

Let \[ \Gamma\big(\widehat{S}(T\dual_{\cM})\big)\otimes_{\cR}\Gamma\big(S(\tangent{\cM})\big)
\xto{\duality{\argument}{\argument}}\cR \]
be the duality pairing defined by
\[ \contraction{\alpha_1\odot\cdots\odot\alpha_q}
{X_1\odot\cdots\odot X_p} =
\begin{cases}
\sum_{\sigma\in S_p} \sign\
\duality{\alpha_{1}}{X_{\sigma(1)}}\cdot
\duality{\alpha_{2}}{X_{\sigma(2)}}\cdots
\duality{\alpha_{p}}{X_{\sigma(p)}}
& \text{if } p=q \\
0 & \text{if } p\neq q
\end{cases} \]
for all homogeneous elements $\alpha_1,\dots,\alpha_q\in\sections{\tangent{\cM}^{\vee}}$
and $X_1,\dots,X_p\in\sections{\tangent{\cM}}$.
The symbol $\sign=\sign(\alpha_{1},\alpha_{2},\cdots,\alpha_{p}, X_1,X_2,\cdots,X_p)$
denotes the Koszul signs arising from the reordering of the homogeneous
objects $\alpha_{1},\alpha_{2},\cdots,\alpha_{p}, X_1,X_2,\cdots,X_p$
in each term of the right hand side.

The following is an immediate consequence of the Fedosov
construction of graded manifolds
\cite[Theorem~5.6 and Proposition~5.2]{MR3910470}.
A short description on this topic can be found in Appendix~\ref{sec:A}.

\begin{lemma}\label{mainlemma}
\strut
\begin{enumerate}
\item[(i)] The bundle map
$\Bmap:\tangent{\cM}\otimes S(\tangent{\cM})\to S(\tangent{\cM})$
in Lemma~\ref{lem:SCE} is
completely determined by the curvature $\curvature$ and its higher
covariant derivatives. 
More precisely, given any $Y\in\XX(\cM)$, 
provided that $\Bmap(Y;\boldY)$ is known
for all $\boldY\in\sections{S^{\leq n-1}(\tangent{\cM})}$,
one can compute $\Bmap(Y;\boldX)$ for any $\boldX\in\sections{S^n(\tangent{\cM})}$.
\item[(ii)] Moreover, if $\curvature = 0$, then
$\Bmap(Y;\boldX)=Y\odot\boldX$,
for all $Y\in\XX(\cM)$ and $\boldX\in\sections{S(\tangent{\cM})}$.
\end{enumerate}
\end{lemma}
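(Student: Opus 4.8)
The plan is to read the defining formula~\eqref{Bmap} of $\Bmap$ through the recursive characterization~\eqref{formulaofpbw} of $\pbw^{\nabla}$, and thereby convert it into a recursion on the symmetric degree. First I would introduce the transferred left-multiplication operator $L^{\nabla}_{Y}:=(\pbw^{\nabla})^{-1}\circ(Y\cdot\argument)\circ\pbw^{\nabla}$ on $\sections{S(\tangent{\cM})}$, so that $\Bmap(Y;\boldX)=L^{\nabla}_{Y}(\boldX)-\nabla_{Y}\boldX$. With this notation, part~(i) becomes the statement that $L^{\nabla}_{Y}(\boldX)=Y\odot\boldX+\nabla_{Y}\boldX+G(Y;\boldX)$, where the correction $G(Y;\boldX)$ lies in $\sections{S^{\leq n-1}(\tangent{\cM})}$ whenever $\boldX\in\sections{S^{n}(\tangent{\cM})}$ and is assembled solely from the curvature $\curvature$ and its higher covariant derivatives; the $\cR$-linearity that makes $\Bmap$ a genuine bundle map is already furnished by Lemma~\ref{lem:SCE}.

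The heart of the argument is this recursion. Applying~\eqref{formulaofpbw} to $Y\odot\boldX\in\sections{S^{n+1}(\tangent{\cM})}$ and isolating the term in which $Y$ plays the distinguished role produces $Y\cdot\pbw^{\nabla}(\boldX)-\pbw^{\nabla}(\nabla_{Y}\boldX)$ --- the naive part --- together with a sum of terms in which one of the $X_k$ is distinguished. Each of the latter can be rewritten, via the inductive hypothesis on lower symmetric powers, in terms of $\Bmap(Y;\boldY)$ with $\boldY\in\sections{S^{\leq n-1}(\tangent{\cM})}$ and nested covariant derivatives $\nabla_{X_k}\nabla_{Y}$ and $\nabla_{Y}\nabla_{X_k}$. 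Since $\nabla$ is torsion-free, the antisymmetric combinations of these nested derivatives collapse to the curvature $\curvature$ (and, upon iterating, to its covariant derivatives $\coder\curvature,\dots$), while the symmetric part reassembles $Y\odot\boldX+\nabla_{Y}\boldX$. Collecting the coefficient of $Y\cdot\pbw^{\nabla}(\boldX)$ and solving the resulting linear relation exhibits $\Bmap(Y;\boldX)$ as $Y\odot\boldX$ plus a curvature term of strictly lower symmetric degree; this is precisely the content encoded in the Fedosov iteration of~\cite[Proposition~5.2 and Theorem~5.6]{MR3910470}, recalled in Appendix~\ref{sec:A}. This proves~(i), and the same recursion shows that $\Bmap(Y;\boldX)$ on $\sections{S^{n}(\tangent{\cM})}$ is computable once $\Bmap(Y;\boldY)$ is known on $\sections{S^{\leq n-1}(\tangent{\cM})}$.

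For part~(ii) I would run the identical recursion under the hypothesis $\curvature=0$. The base case $\boldX\in\sections{S^{1}(\tangent{\cM})}$ follows by a direct computation from~\eqref{formulaofpbw} and torsion-freeness, giving $\Bmap(Y;X)=Y\odot X$. In the inductive step the only terms separating $L^{\nabla}_{Y}(\boldX)$ from $Y\odot\boldX+\nabla_{Y}\boldX$ are exactly the curvature corrections $G(Y;\boldX)$ identified in~(i); with $\curvature=0$ these vanish at every stage, so the recursion collapses to $L^{\nabla}_{Y}(\boldX)=Y\odot\boldX+\nabla_{Y}\boldX$, that is, $\Bmap(Y;\boldX)=Y\odot\boldX$, as claimed.

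The main obstacle is the bookkeeping in the second paragraph: one must check that the iterated commutators of covariant derivatives produced by the recursion genuinely reorganize --- with the correct Koszul signs in the graded setting --- into $\curvature$ and its covariant derivatives, and that each such contribution strictly lowers the symmetric degree. This sign- and degree-tracking is exactly what the Fedosov machinery of~\cite{MR3910470} is designed to control, so rather than reprove the recursion by brute force I would phrase the whole computation in that language, in which~(i) and~(ii) become direct readings of the flat Fedosov connection and its curvature term.
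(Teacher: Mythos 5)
Your overall framing is right --- the paper also reduces everything to the Fedosov decomposition of Appendix~\ref{sec:A} --- but the computation you present as ``the heart of the argument'' has a genuine gap. Expand $\pbw^{\nabla}(Y\odot\boldX)$ by~\eqref{formulaofpbw} and substitute $Z\cdot\pbw^{\nabla}(\boldW)=\pbw^{\nabla}\bigl(\Bmap(Z;\boldW)+\nabla_{Z}\boldW\bigr)$ throughout: the covariant-derivative terms cancel and you are left with
\[
\Bmap(Y;\boldX)+\sum_{k=1}^{n}\pm\,\Bmap\bigl(X_k;\,Y\odot\boldX^{\{k\}}\bigr)=(n+1)\,Y\odot\boldX .
\]
The cross terms involve $\Bmap$ evaluated on $Y\odot\boldX^{\{k\}}\in\sections{S^{n}(\tangent{\cM})}$ --- the \emph{same} symmetric degree as $\boldX$, not a lower one --- so the induction ``on lower symmetric powers'' you invoke is not well-founded as stated. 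This identity only determines the symmetrization of $\Bmap$ over the choice of distinguished factor (and is consistent with, but does not imply, $\Bmap(Y;\boldX)=Y\odot\boldX$ in the flat case). To pin down $\Bmap$ itself one needs the complementary ``antisymmetric'' input, namely that the transferred connection $\nabla^{\lightning}_{Y}=(\pbw^{\nabla})^{-1}\circ(Y\cdot\argument)\circ\pbw^{\nabla}$ is \emph{flat} (since left multiplications satisfy $[L_Y,L_X]=L_{[Y,X]}$), together with the normalization $\frakh\circ\Amap=0$ and the homotopy identity of Lemma~\ref{lem:delta}. That is precisely the content of Theorem~\ref{LStheorem}; it is not mere sign bookkeeping, and your recursion does not close without it.

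The paper sidesteps the direct recursion entirely: dualizing $d^{\brevee{\nabla}^{\lightning}}=d^{\brevee{\nabla}}-\delta+\widetilde{\Amap}$ against the pairing $\duality{\argument}{\argument}$ gives in one line
\[
\Bmap(Y;\boldX)=Y\odot\boldX-(i_Y\widetilde{\Amap})^{T}\boldX ,
\]
and since $\Amap$ takes values in $\widehat{S}^{\geq 2}(\tangent{\cM}^{\vee})\otimes\tangent{\cM}$, the correction term lands in $\sections{S^{\leq n-1}(\tangent{\cM})}$ and is determined by $\curvature$ and its higher covariant derivatives through the recursion of Corollary~\ref{cor:An}; part~(ii) is then immediate because $\Amap_2=\frakh\circ\curvature$ and the whole iteration vanishes when $\curvature=0$. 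Your closing sentence --- that you would ultimately phrase everything in the Fedosov language --- would indeed repair the argument, but then the middle paragraph should be replaced by this dualization rather than by the degree-lowering induction, which does not go through in the form you describe.
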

\begin{proof}
(i). Let
\[ \nabla^{\lightning}_{Y}\boldX=(\pbw^{\nabla})^{-1}(Y\cdot \pbw^{\nabla}(\boldX)) .\]
Then by Equation~\eqref{Bmap},
\[ \Bmap(Y; \boldX) =
\nabla^{\lightning}_Y \boldX-\nabla_Y \boldX .\]

For the rest of the proof, we follow the notation from Appendix~\ref{sec:A},
in particular, Theorem~\ref{LStheorem}.
For all $\sigma\in\Gamma\big(\widehat{S}(T\dual_{\cM})\big)$, we have
\begin{align*}
\duality{\sigma}{\nabla^{\lightning}_Y \boldX-\nabla_Y \boldX}
&=(-1)^{\degree{\sigma}\cdot \degree{Y}}\duality{\nabla_Y \sigma
-\nabla^{\lightning}_Y\sigma}{\boldX}\\
&=(-1)^{\degree{\sigma}\cdot \degree{Y}}\contraction{i_Y (d^{\nabla}-
d^{\nabla^{\lightning}})(\sigma)}{\boldX}\\
&=(-1)^{\degree{\sigma}\cdot \degree{Y}}\contraction{i_Y (\delta-
\widetilde{\Amap})(\sigma)}{\boldX}\\
&=\duality{\sigma}{Y\odot\boldX}-(-1)^{\degree{\sigma}\cdot\degree{Y}}
\duality{i_Y\widetilde{\Amap}(\sigma)}{\boldX} \\
&=\duality{\sigma}{Y\odot \boldX}
-\duality{\sigma}{(i_Y\widetilde{\Amap})^T \boldX}.
\end{align*}
Thus it follows that
\[ \Bmap (Y; \boldX)
=Y\odot \boldX-(i_Y\widetilde{\Amap})^T\boldX .\]
The conclusion thus follows from Corollary~\ref{cor:An}.

(ii) Moreover, if $\curvature = 0$, then
$\Amap=0$ by Equation~\eqref{eq:A}, and hence we obtain
\[ \Bmap (Y;\boldX)=Y\odot \boldX.\qedhere \]
\end{proof}

\begin{theorem}\label{theorem2}
\strut
\begin{enumerate}
\item[(i)] The sections $R_{n}\in\sections{S^{n}(\tangent{\cM}^{\vee})\otimes\tangent{\cM}}$,
with $n\geq 3$, are completely determined by the Atiyah $1$-cocycle $\atiyahcocycleQ$,
the curvature $\curvature$, and their higher covariant derivatives, through the recursive formula
\begin{equation}\label{Rn}
R_n=\frac{2}{n}(\bar{R}_{2}\star\id)
+\frac{1}{n}\sum_{k=2}^{n-1}
\left[\big(\overline{\widetilde{\coder}R}_k \star \id \big) +(1-k)(\bar{R}_{k}\star\id )
-\Bmap\circ(\bar{R}_{k}\otimes\id)\circ\Delta\right]
.\end{equation}
\item[(ii)]
In particular, if $\curvature = 0$, then $ R_{2}=-\atiyahcocycleQ$
and $R_{n}= \frac{1}{n}\widetilde{\coder} R_{n-1}$ for all $n\geq 3$.
\end{enumerate}
\end{theorem}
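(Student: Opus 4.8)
The plan is to obtain \eqref{Rn} by pushing the recursion of Proposition~\ref{formula1} for $\pbwcommutator$ through the coalgebra isomorphism $\pbw^{\nabla}$ and reading off the resulting recursion for the coderivation
\[ q:=(\pbw^{\nabla})^{-1}\circ\pbwcommutator=\sum_{k\geq 2}(\bar{R}_{k}\star\id), \]
which is precisely the operator appearing in \eqref{eq:ORLY}. The starting observation is that, for $\boldX\in\sections{S^{n}(\tangent{\cM})}$, the coderivation $q$ sends $\boldX$ into $\bigoplus_{m=1}^{n-1}\sections{S^{m}(\tangent{\cM})}$, and its component in $\sections{\tangent{\cM}}=\sections{S^{1}(\tangent{\cM})}$ is exactly $R_{n}(\boldX)$ (it is the $k=n$ term of $\sum_{k}(\bar{R}_{k}\star\id)$). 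Thus it suffices to compute $q$ restricted to $\sections{S^{n}(\tangent{\cM})}$ and extract its degree-one part, and I would do this by applying $(\pbw^{\nabla})^{-1}$ to \eqref{formulaA3} term by term.

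Two translations do the essential work. First, whenever a factor $X_{k}\cdot\pbw^{\nabla}(\boldY)$ occurs, the defining identity \eqref{Bmap} of $\Bmap$ gives $(\pbw^{\nabla})^{-1}\big(X_{k}\cdot\pbw^{\nabla}(\boldY)\big)=\Bmap(X_{k};\boldY)+\nabla_{X_{k}}\boldY$; applied with $\boldY=q(\boldX^{\{k\}})$ this turns the first group of terms in \eqref{formulaA3} into a $\Bmap$-contribution together with a covariant-derivative contribution. Second, using $\atiyahcocycleQ=-R_{2}$ from \eqref{firstTerms}, the symmetry of the Atiyah cocycle (Proposition~\ref{Atiyahproperty}(1)), and cocommutativity of the coproduct, the final sum of \eqref{formulaA3}, once $\pbw^{\nabla}$ is stripped off, is recognized as the convolution $\frac{2}{n}(\bar{R}_{2}\star\id)$, i.e.\ the leading term of \eqref{Rn}. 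Moreover the covariant-derivative terms $\nabla_{X_{k}}\big(q(\boldX^{\{k\}})\big)-q\big(\nabla_{X_{k}}\boldX^{\{k\}}\big)$, summed over $k$ with their Koszul signs, differentiate only the $R_{j}$ in each convolution (the Leibniz terms hitting the remaining factors cancel), and so reproduce the symmetrized covariant derivatives of \eqref{eq:symcovariantderivative}; this is the origin of the terms $\big(\overline{\widetilde{\coder}R}_{k}\star\id\big)$ in \eqref{Rn}.

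The main obstacle is the bookkeeping that recasts the $\Bmap$-contribution $\sum_{k}\sign_{k}(-1)^{\degree{X_{k}}}\Bmap\big(X_{k};q(\boldX^{\{k\}})\big)$, in which a \emph{bare} vector field sits in the first slot of $\Bmap$, into the form $\Bmap\circ(\bar{R}_{k}\otimes\id)\circ\Delta$ of \eqref{Rn}, in which the first slot carries the $R_{k}$-processed argument, and to pin down the integer coefficients $(1-k)$ and the overall signs. I would expand $q(\boldX^{\{k\}})=\sum_{j}(\bar{R}_{j}\star\id)(\boldX^{\{k\}})$, rewrite the double sum over $k$ and over the shuffles implicit in each convolution in terms of the single coproduct $\Delta$ of $\boldX$, and collect coinciding summands; the multiplicities of these coincidences — together with the mismatch between ``convolve, then differentiate one chosen slot'' and ``symmetrize the derivative over all slots of $\widetilde{\coder}R_{k}$'' — are exactly what generate the factors $(1-k)$. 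Lemma~\ref{mainlemma}(i) then guarantees that the $\Bmap$-terms so produced are genuinely determined by $\curvature$ and its higher covariant derivatives, so that $R_{n}$ is expressed through $\atiyahcocycleQ$, $\curvature$ and their higher covariant derivatives, as asserted in~(i).

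Part~(ii) follows by specialization. When $\curvature=0$, Lemma~\ref{mainlemma}(ii) gives $\Bmap(Y;\boldX)=Y\odot\boldX$, whence $\Bmap\circ(\bar{R}_{k}\otimes\id)\circ\Delta=\bar{R}_{k}\star\id$ and the bracketed expression in \eqref{Rn} collapses. Extracting the degree-one part of the resulting operator on $\sections{S^{n}(\tangent{\cM})}$, every term $(\bar{R}_{k}\star\id)$ as well as the leading $\frac{2}{n}(\bar{R}_{2}\star\id)$ lands in $\sections{S^{\geq 2}(\tangent{\cM})}$ for $k\leq n-1$ and hence contributes nothing, leaving only the $k=n-1$ summand of $\big(\overline{\widetilde{\coder}R}_{k}\star\id\big)$, which equals $\frac{1}{n}\widetilde{\coder}R_{n-1}$. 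Combined with $R_{2}=-\atiyahcocycleQ$ from \eqref{firstTerms}, this is exactly~(ii).
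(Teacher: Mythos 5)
Your overall route coincides with the paper's: push the recursion \eqref{formulaA3} for $\pbwcommutator$ through $(\pbw^{\nabla})^{-1}$, expand $(\pbw^{\nabla})^{-1}\circ\pbwcommutator=\sum_{k}(\bar{R}_k\star\id)$ as in \eqref{eq:ORLY}, match the Atiyah term with $\tfrac{2}{n}(\bar{R}_2\star\id)$, match the covariant-derivative terms with $\overline{\widetilde{\coder}R}_k\star\id$ (the Leibniz cancellation you describe is exactly the paper's computation of $\beta$), and invoke Lemma~\ref{mainlemma}; your part~(ii), done by projecting onto $\sections{S^1(\tangent{\cM})}$, is fine and even a bit cleaner than the paper's appeal to induction. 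However, the step you yourself flag as ``the main obstacle'' is left genuinely unresolved, and the mechanism you propose for it would not work. Reorganizing the double sum over $j$ and over shuffles and ``collecting coinciding summands'' is a purely coalgebraic manipulation; it cannot convert the terms $\Bmap\bigl(X_j;(\bar{R}_k\star\id)(\boldX^{\{j\}})\bigr)$, with a bare vector field in the first slot, into the term $\Bmap\circ(\bar{R}_k\otimes\id)\circ\Delta$ with the $R_k$-processed argument in that slot, because $\Bmap$ is an opaque, curvature-dependent bundle map whose values at different first arguments are not related by any shuffle combinatorics.

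The missing ingredient is one further application of the PBW recursion \eqref{formulaofpbw}, this time to the $(n-k+1)$-factor symmetric tensor $R_k(\boldX_{(1)})\odot\boldX_{(2)}$. Indeed, for any $\boldZ=Z_1\odot\cdots\odot Z_m\in\sections{S^m(\tangent{\cM})}$, applying $(\pbw^{\nabla})^{-1}$ to \eqref{formulaofpbw} and using the definition \eqref{Bmap} gives $\sum_{j}\sign_j\,\Bmap(Z_j;\boldZ^{\{j\}})=m\,\boldZ$; taking $\boldZ=R_k(\boldX_{(1)})\odot\boldX_{(2)}$ and separating the factor $R_k(\boldX_{(1)})$ from the remaining $X_j$'s yields
\[
\sum_{j=1}^{n}\sign_j(-1)^{\degree{X_j}}\,\Bmap\bigl(X_j;(\bar{R}_k\star\id)(\boldX^{\{j\}})\bigr)
=(n-k+1)(\bar{R}_k\star\id)(\boldX)-\Bmap\circ(\bar{R}_k\otimes\id)\circ\Delta(\boldX),
\]
which is the paper's Equation~\eqref{6A}. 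It is this identity --- not a count of coinciding summands --- that simultaneously produces the processed-slot $\Bmap$-term and the coefficient $n-k+1$, which becomes $1-k$ only after you subtract the $\sum_{k=2}^{n-1}(\bar{R}_k\star\id)(\boldX)$ needed to isolate $R_n(\boldX)$ from $(\pbw^{\nabla})^{-1}\circ\pbwcommutator(\boldX)$. Once you supply this identity, your outline closes up into the paper's proof.
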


In terms of Sweedler's notation
$\Delta \boldX = \boldX_{(1)}\otimes \boldX_{(2)}$,
one can rewrite Equation~\eqref{Rn} as follows:
\begin{multline*}
R_{n}(\boldX) = \frac{1}{n}\sum_{k=2}^{n-1}\left[\left(\widetilde{\coder}
R_k(\boldX_{(1)})\odot\boldX_{(2)}\right)
+ (1-k)\left( R_{k}(\boldX_{(1)})\odot \boldX_{(2)}\right)
-\Bmap\left(R_k(\boldX_{(1)}); \boldX_{(2)}\right)\right] \\
+\frac{2}{n} \left( R_2 (\boldX_{(1)})\odot\boldX_{(2)} \right ).
\end{multline*}

Now we proceed to prove Theorem~\ref{theorem2}.
For any $\boldX\in \sections{S^{n}(\tangent{\cM})}$,
we can write
\begin{align}
\pbwcommutator(\boldX)&= \pbw^{\nabla}\circ \Big ((\pbw^{\nabla})^{-1}
\circ \UdifferentialM \circ \pbw^{\nabla}- \SdifferentialM \Big)(\boldX) && \nonumber\\
&=\pbw^{\nabla} \left(\sum_{k=0}^{n}(\bar{R}_k \star \id )(\boldX) \right)
&& \text{by Equation~\eqref{eq:ORLY}} \nonumber\\
&=\sum_{k=2}^{n}\pbw^{\nabla}\circ (\bar{R}_k \star \id)(\boldX)
&& \text{by Equations~\eqref{firstTerms}} 
\label{Cnabla}.
\end{align}

In order to simplify the notation,
we introduce a sequence of maps
$\Bmapindex{k}:\sections{S(\tangent{\cM})}
\to\sections{S(\tangent{\cM})}$,
for $k\geq 2$, defined by
\[ \Bmapindex{k}(\boldX)=\Bmap\circ(\bar{R}_k\otimes\id)\circ\Delta(\boldX),
\quad\forall\boldX\in\sections{S^{n}(\tangent{\cM})} .\]
Explicitly, in terms of Sweedler's notation
$\Delta\boldX=\boldX_{(1)}\otimes\boldX_{(2)}$, we write
\begin{align}
\Bmapindex{k}(\boldX)
&=\Bmap(R_{k}(\boldX_{(1)});\boldX_{(2)})\nonumber\\
&=(\pbw^{\nabla})^{-1}\left( R_{k}(\boldX_{(1)})\cdot \pbw^{\nabla}(\boldX_{(2)})\right)
-\nabla_{R_{k}(\boldX_{(1)})}\boldX_{(2)}. \label{defn:bmapindex}
\end{align}
From Lemma~\ref{lem:SCE}, it follows that $\Bmapindex{k}$, with
$k\geq 2$, is $\cR$-linear. That is, $\Bmapindex{k}$, with
$k\geq 2$, is indeed a bundle map
$S(\tangent{\cM})\to S(\tangent{\cM})$.

\begin{proof}[Proof of Theorem~\ref{theorem2}]

(i) First, we will prove the recursive formula \eqref{Rn}.

Pick any element $\boldX=X_1\odot\cdots\odot X_n$ in $\Gamma\big(S^n(T_\cM)\big)$.
Again, for the sake of simplicity, we use Sweedler's notation
$\Delta \boldX = \boldX_{(1)}\otimes \boldX_{(2)}$ and the Koszul sign
$\sign_{k}=(-1)^{\degree{X_{k}}(\degree{X_{1}}+\cdots +\degree{X_{k-1}})}$.
For each $l$, by
Equations~\eqref{formulaofpbw} and~\eqref{eq:convolutionproduct}, we have
\begin{align*}
&(n-l+1)\pbw^{\nabla}\circ (\bar{R}_{l}\star \id)(\boldX) \\
&=(n-l+1) \pbw^{\nabla}(R_{l}(\boldX_{(1)})\odot \boldX_{(2)})\\
&=R_{l}(\boldX_{(1)})\cdot \pbw^{\nabla}(\boldX_{(2)})
-\pbw^{\nabla}\left(\nabla_{R_{l}(\boldX_{(1)})}\boldX_{(2)}\right )\\
&\quad +\sum_{k=1}^{n} \sign_{k} (-1)^{\degree{X_{k}}}
\left( X_{k} \cdot \pbw^{\nabla}\left(R_{l}(\boldX_{(1)}^{\{k\}})
\odot\boldX_{(2)}^{\{k\}}\right) -\pbw^{\nabla}\left(\nabla_{X_{k}}
\left( R_{l}(\boldX_{(1)}^{\{k\}})\odot \boldX_{(2)}^{\{k\}}\right) \right) \right)\\
&=R_{l}(\boldX_{(1)})\cdot \pbw^{\nabla}(\boldX_{(2)})
-\pbw^{\nabla}\left(\nabla_{R_{l}(\boldX_{(1)})}\boldX_{(2)}\right) \\
&\quad +\sum_{k=1}^{n}\sign_{k} (-1)^{\degree{X_k}}
\left[ X_k \cdot \pbw^{\nabla}\circ(\bar{R}_l\star \id)(\boldX^{\{k\}})
-\pbw^{\nabla}\left( \nabla_{X_k} \left((\bar{R}_l\star \id)(\boldX^{\{k\}})\right)\right)\right]
.\end{align*}

Combining it with Equation~\eqref{defn:bmapindex}, we conclude that
\begin{multline*}
(n-l+1)\pbw^{\nabla}\circ(\bar{R}_{l}\star\id)(\boldX)
-\pbw^{\nabla}\circ\Bmapindex{l}(\boldX) \\
=\sum_{k=1}^{n}\sign_{k}(-1)^{\degree{X_k}} 
\left [X_k\cdot\pbw^{\nabla}\circ(\bar{R}_l\star\id)(\boldX^{\{k\}})
-\pbw^{\nabla}\left(\nabla_{X_k} 
\left((\bar{R}_l\star\id)(\boldX^{\{k\}})\right)\right)\right]
.\end{multline*}
Therefore,
\begin{multline}\label{6A}
(n-l+1) (\bar{R}_{l}\star \id)(\boldX)- \Bmapindex{l}(\boldX) \\
=\sum_{k=1}^{n}\sign_{k} (-1)^{\degree{X_k}}
\left [ (\pbw^{\nabla})^{-1} \left(X_k \cdot \pbw^{\nabla}
\circ(\bar{R}_l\star \id)(\boldX^{\{k\}}) \right)
- \nabla_{X_k} \left((\bar{R}_l\star \id)(\boldX^{\{k\}})\right) \right]
.\end{multline}

Also, for each $l$,
by Equation~\eqref{eq:symcovariantderivative}, we have
\begin{equation}
\begin{aligned}
& (\overline{\widetilde{\coder}R}_l \star \id) (\boldX) \nonumber \\
&=\sum_{k=1}^{n}\sign_{k} \left[ (\coder R_{l})(X_{k};\boldX^{\{k\}}_{(1)})
\odot \boldX^{\{k\}}_{(2)}\right] \nonumber\\
&=\sum_{k=1}^{n}\sign_{k} \left [ (-1)^{\degree{X_{k}}} \left(\left(
\nabla_{X_{k}} R_{l}(\boldX^{\{k\}}_{(1)}) \right) \odot \boldX^{\{k\}}_{(2)} \right) -
\left( R_{l}\left(\nabla_{X_k}\boldX^{\{k\}}_{(1)}\right)
\odot \boldX^{\{k\}}_{(2)}\right)\right] \nonumber \\
&=\sum_{k=1}^{n}\sign_{k} \left [ (-1)^{\degree{X_{k}}}
\left(\left( \nabla_{X_{k}} R_{l}(\boldX^{\{k\}}_{(1)}) \right)
\odot \boldX^{\{k\}}_{(2)} \right)+(-1)^{\degree{X_{k}}\cdot
\degree{\boldX^{\{k\}}_{(1)}}}\left( R_{l}(\boldX^{\{k\}}_{(1)})\odot
\left(\nabla_{X_{k}}\boldX^{\{k\}}_{(2)}\right)\right)\right] \nonumber\\
&\quad -\sum_{k=1}^{n}\sign_{k} \left [
\left( R_{l}\left(\nabla_{X_k}\boldX^{\{k\}}_{(1)} \right) \odot \boldX^{\{k\}}_{(2)}\right)
+(-1)^{\degree{X_{k}}\cdot \degree{\boldX^{\{k\}}_{(1)}}}
\left( R_{l}(\boldX^{\{k\}}_{(1)})\odot \left(\nabla_{X_{k}}
\boldX^{\{k\}}_{(2)}\right)\right)\right] \nonumber\\
&= \sum_{k=1}^{n}\sign_k\left[(-1)^{\degree{X_k}} \nabla_{X_k}
\left( (\bar{R}_l\star \id)(\boldX^{\{k\}})\right)
-(\bar{R}_l\star \id)\left(\nabla_{X_k}\boldX^{\{k\}}\right) \right] .
\end{aligned}
\end{equation}

According to~\eqref{firstTerms}, we have
$R_{2}=-\atiyahcocycleQ$. Hence
\begin{equation}\label{eq:HAN}
\pbw^{\nabla}\circ (\bar{R}_{2}\star \id)(\boldX) =
-\sum_{i<j} \sign_{i} \sign_{j}(-1)^{\degree{X_{i}}\cdot \degree{X_{j}}} 
\pbw^{\nabla}\left( \atiyahcocycleQ(X_i, X_j)\odot \boldX^{\{i,j\}}\right).
\end{equation}

By Equations~\eqref{formulaA3} and~\eqref{eq:HAN}, we have
\begin{align*}
&\pbwcommutator(\boldX) - \frac{2}{n} \pbw^{\nabla}\circ (\bar{R}_{2}\star \id)(\boldX)\\
&=\frac{1}{n}\sum_{k=1}^{n} \sign_{k}\left [ (-1)^{\degree{X_{k}}}X_{k}\cdot 
\pbwcommutator(\boldX^{\{k\}})-\pbwcommutator(\nabla_{X_k}\boldX^{\{k\}})\right] \nonumber \\
&= \frac{1}{n} \sum_{k=1}^{n}\sum_{l=2}^{n-1}\sign_k
\left[ (-1)^{\degree{X_k}} X_k \cdot \pbw^{\nabla}\circ (\bar{R}_l\star \id)(\boldX^{\{k\}})
-\pbw^{\nabla} \circ (\bar{R}_l\star \id)(\nabla_{X_k}\boldX^{\{k\}})
\right]\nonumber \\
&=\frac{1}{n} \sum_{k=1}^{n}\sum_{l=2}^{n-1}\sign_k \ (-1)^{\degree{X_k}} 
\left [X_k \cdot \pbw^{\nabla}\circ(\bar{R}_l\star \id)(\boldX^{\{k\}})
-\pbw^{\nabla}\left( \nabla_{X_k}
\left((\bar{R}_l\star \id)(\boldX^{\{k\}})\right) \right)\right] \\
&\quad +\frac{1}{n} \sum_{k=1}^{n}\sum_{l=2}^{n-1}
\sign_k\left[(-1)^{\degree{X_k}}\pbw^{\nabla} \left( \nabla_{X_k} \left(
(\bar{R}_l\star \id)(\boldX^{\{k\}})\right)\right)
-\pbw^{\nabla} \circ (\bar{R}_l\star \id)\left(\nabla_{X_k}\boldX^{\{k\}}\right) \right]
,\end{align*}
where the second equality is obtained by applying Equation~\eqref{Cnabla}
to $\pbwcommutator(\boldX^{\{k\}})$ and $\pbwcommutator(\nabla_{X_{k}}\boldX^{\{k\}})$.

It thus follows that
\begin{equation}\label{eq:45}
(\pbw^{\nabla})^{-1} \circ \pbwcommutator(\boldX)- \frac{2}{n}
(\bar{R}_{2}\star \id)(\boldX)=\alpha+\beta,
\end{equation}
where
\[ \alpha=\frac{1}{n} \sum_{k=1}^{n}\sum_{l=2}^{n-1}\sign_k (-1)^{\degree{X_k}}
\left[ (\pbw^{\nabla})^{-1}\left(X_k\cdot\pbw^{\nabla}
\circ(\bar{R}_l\star \id)(\boldX^{\{k\}})\right)
-\nabla_{X_k}\left((\bar{R}_l\star\id)(\boldX^{\{k\}})\right) \right] ,\]
and
\begin{multline}\label{eq:beta}
\beta=\frac{1}{n} \sum_{k=1}^{n}\sum_{l=2}^{n-1}\sign_k\left[(-1)^{\degree{X_k}}
\left( \nabla_{X_k} \left( (\bar{R}_l\star \id)(\boldX^{\{k\}})\right)\right)
-(\bar{R}_l\star \id)\left(\nabla_{X_k}\boldX^{\{k\}}\right) \right] \\
=\sum_{l=2}^{n-1}\frac{1}{n} \big(\overline{\widetilde{\coder} R}_l \star \id\big) (\boldX) .
\end{multline}

Now, according to~\eqref{6A},
\begin{align}
\alpha-\sum_{l=2}^{n-1} (\bar{R}_{l}\star \id)(\boldX) 
=&\sum_{l=2}^{n-1} \frac{1}{n} \left( (n-l+1) (\bar{R}_l\star \id)
(\boldX)- \Bmapindex{l}(\boldX) \right) -
\sum_{l=2}^{n-1} (\bar{R}_{l}\star \id)(\boldX) \nonumber\\
=& \frac{1}{n} \sum_{l=2}^{n-1} \left[ (1-l)\left((\bar{R}_{l}\star \id)(\boldX)\right)-
\Bmapindex{l}(\boldX)\right] \label{eq:50}.
\end{align}

Equation~\eqref{Cnabla} can be rewritten as
\[ R_{n}(\boldX)=(\pbw^{\nabla})^{-1}\circ\pbwcommutator(\boldX)
-\sum_{k=2}^{n-1}(\bar{R}_{k}\star\id)(\boldX) .\]
Equations~\eqref{eq:45}, \eqref{eq:beta} and~\eqref{eq:50}
then yield Equation~\eqref{Rn}.

From \eqref{firstTerms}, we know that $R_2=-\atiyahcocycleQ$.
According to Lemma~\ref{mainlemma},
the bundle map
$\Bmap$
is
completely determined by the curvature $\curvature$ and its higher
covariant derivatives.
It thus follows from the recursive formula \eqref{Rn}
that, for any $n\geq 3$,
$R_{n}$ is determined by $R_{k}$ with $k\leq n-1$,
their covariant derivatives and the curvature. Thus, by inductive argument,
$R_{n}$ is completely determined by the Atiyah $1$-cocycle,
the curvature and their higher covariant derivatives.

(ii) Assume that $\curvature=0$.
By Lemma~\ref{mainlemma}, the bundle map
$\Bmap:{\tangent{\cM}\otimes S(\tangent{\cM})}\to S(\tangent{\cM})$
is given by $\Bmap (Y;\boldX)=Y\odot \boldX$.
Thus the formula $R_{n}(\boldX)=\frac{1}{n}\widetilde{\coder}R_{n-1}(\boldX)$
can be obtained by induction argument, again using the recursive formula
\eqref{Rn}.

This concludes the proof of the theorem.
\end{proof}

\section{Examples}

This section is devoted to the study of examples of Kapranov
$L_{\infty}[1]$ algebras of some standard dg manifolds including
those corresponding to $L_{\infty}[1]$ algebras, foliations and
complex manifolds as in Examples~\ref{example-one} and~\ref{example-two}.

\subsection{dg manifolds associated to \texorpdfstring{$L_{\infty}[1]$}{L∞[1]} algebras}

Let $\frakg$ be a finite dimensional $L_{\infty}$ algebra with $d=\dim \frakg$.
Then $\frakg[1]$ is an $L_{\infty}[1]$ algebra: the (canonical) symmetric coalgebra
$\left( S(\frakg[1]),\Delta \right)$ is equipped with a coderivation
$\widetilde{Q} \in \coDer( S(\frakg[1]))$ of degree $+1$
satisfying $\widetilde{Q}\circ\widetilde{Q}=0$ and $\widetilde{Q}(1)=0$.
Indeed, $\widetilde{Q}$ is equivalent to a sequence of linear maps
$q_k:S^{k}(\frakg[1]) \to \frakg[1], \ k\geq 1$, of degree $+1$
satisfying the generalized Jacobi identities.
The map $q_k$ is called the $k$-th multibracket.

Given an $L_{\infty}[1]$ algebra $\frakg[1]$, we say a vector
space $\frakM$ is a $\frakg[1]$-module if
there exists a sequence of maps
$\rho_k: S^{k}(\frakg[1])\otimes\frakM\to\frakM$ of degree $+1$,
$\forall k\geq 0$, satisfying the standard compatibility condition \cite{MR1327129}.
If we write
\begin{equation}\label{eq:action}
\rho=\sum_{k\geq 0}\rho_{k}:S(\frakg[1])\otimes\frakM\to\frakM,
\end{equation}
the compatibility condition is expressed
explicitly as
\[ \rho\circ\left((\id_{S(\frakg[1])}\otimes\rho)\circ(\Delta\otimes\id_{\frakM})
+\widetilde{Q}\otimes\id_{\frakM}\right)=0 .\]

As an obvious example, we have the \bfemph{trivial module}
$\frakM=\KK$ together with the trivial action $\rho_k=0$ for all $k\geq 0$.
Another example is the \bfemph{adjoint module}
$\frakM=\frakg[1]$ with the adjoint action
$\rho_k: S^{k}(\frakg[1])\otimes \frakg[1]\to \frakg[1]$ defined by
\[ \rho_k(\boldX\otimes X)=q_{k+1}(\boldX\odot X) ,\]
where $\boldX\in S^{k}(\frakg[1])$, $X\in\frakg[1]$ 
and $q_{k+1}:S^{k+1}(\frakg[1])\to\frakg[1]$ is
the multibracket of the $L_{\infty}[1]$ algebra $\frakg[1]$.
That is, $\{\rho_k\}_{k\geq 0}$ is defined by
the following commutative diagram
\[ \begin{tikzcd}
S^{k}(\frakg[1])\otimes \frakg[1] \arrow[rr, "\rho_k"] \arrow[rd, "\sym"] & & \frakg[1] \\
&S^{k+1}(\frakg[1])\arrow[ru, "q_{k+1}"] &
\end{tikzcd} \]
where $\sym: S^{\bullet}(\frakg[1])\otimes \frakg[1] \to S^{\bullet+1}(\frakg[1])$ 
is the canonical symmetrization map.
By taking its dual, $(\frakg[1])^\vee$ is also a $\frakg[1]$-module,
where the action is called the \bfemph{coadjoint action}.	

Throughout this section, we denote the degree
of a homogeneous element $x\in\frakg[1]$ by $\degree{x}$.
In particular, if $\frakg$ is a Lie algebra concentrated
in degree $0$, then for any $x\in\frakg[1]$, its degree is $\degree{x}=-1$.

The associated Chevalley--Eilenberg cochain complex
of a $\frakg[1]$-module $\frakM$ is
\[ \cC(\frakg[1]; \frakM)=\Big(\Hom \big (S(\frakg[1]), \frakM \big), d_{\CE}^{\frakM}\Big), \]
where $d_{\CE}^{\frakM}$ is defined by
\[ d_{\CE}^{\frakM}(F)=\rho\circ (\id\otimes F)
\circ \Delta -(-1)^{\degree{F}}F\circ \widetilde{Q} ,\]
for any homogeneous element
$F\in \Hom \big (S(\frakg[1]), \frakM \big)$.

Observe that when $\frakM$ is the trivial module $\KK$,
the associated Chevalley--Eilenberg cochain complex
\[\cC(\frakg[1]; \KK)=\Big(\Hom \big (S(\frakg[1]), \KK \big), d_{\CE}^{\KK}=d_{\CE}\Big) \]
is a dg algebra, equipped with the multiplication
\begin{equation}\label{product}
f\odot g=\mu_{\KK}\circ(f\otimes g)\circ\Delta : S(\frakg[1])\to\KK
\end{equation}
for any $f,g\in\Hom(S(\frakg[1]),\KK)$.
In other words, the dg algebra
$(\smooth{\frakg[1]},Q)$ coincides with the
Chevalley--Eilenberg cochain complex
$\big(\cC(\frakg[1];\KK),d_{\CE}\big)$
of the trivial $\frakg[1]$-module $\KK$.
That is, $\big(\cC(\frakg[1];\KK),d_{\CE}\big)$
is the dg algebra dual to
the dg coalgebra $(S(\frakg[1]),\widetilde{Q})$.
Moreover, for any $\frakg[1]$-module $\frakM$,
the Chevalley--Eilenberg cochain complex
$\big(\cC(\frakg[1]; \frakM), d_{\CE}^{\frakM}\big)$
is a dg module over the dg algebra $(\smooth{\frakg[1]},Q)$,
where the action, under the identification
$\mu _{0}:\KK\otimes \frakM \cong \frakM$, is given by
\begin{equation}\label{action}
f\cdot F =\mu _{0}\circ (f\otimes F)\circ \Delta: S(\frakg[1])\to \frakM
\end{equation}
for any $f \in \Hom(S(\frakg[1]),\KK)$ and $F\in \Hom(S(\frakg[1]), \frakM)$.
In particular, this means that it satisfies
the compatibility condition
\begin{equation}\label{eq:Leibnizrule}
d_{\CE}^{\frakM}(f\cdot F)=d_{\CE}(f)\cdot F +(-1)^{\degree{f}}f \cdot d_{\CE}^{\frakM}(F).
\end{equation}
Therefore, the Chevalley--Eilenberg differential $d_{\CE}^{\frakM}$
is completely determined by its image of elements in $\frakM$,
which is essentially induced by the action \eqref{eq:action}.
More precisely, for any $x\in\frakM$,
\[ d_{\CE}^{\frakM}(x)=\sum_{k\geq 0}\rho_k(\argument,x)\in\Hom(S(\frakg[1]),\frakM) .\]
In particular, if $\frakM=\frakg[1]$ is the adjoint module 
of the finite dimensional $L_{\infty}[1]$ algebra $\frakg[1]$ described above, 
the Chevalley--Eilenberg differential 
$d_{\CE}^{\frakg[1]}$ (seen as an operator on $\widehat{S}(\frakg[1])^\vee\otimes\frakg[1]$)
is determined by the relation
\begin{equation}\label{eq:ORD}
d_{\CE}^{\frakg[1]}(x)=\sum_{k=1}^{\infty}
\frac{1}{(k-1)!}\xi^{i_{k-1}}\odot \cdots \odot \xi^{i_{1}}\otimes
q_{k}(e_{i_{1}}\odot \cdots \odot e_{i_{k-1}}\odot x), \quad \forall x\in
\frakg[1],
\end{equation}
where $\{e_{1},\cdots, e_{d}\}$ is a basis for $\frakg[1]$ and
$\{\xi^{1},\cdots, \xi^{d}\}$ is the dual basis for $(\frakg[1])^\vee$.
In Equation~\eqref{eq:ORD} and in the remainder of the present section, 
we use the Einstein notation tacitly to avoid inserting summations over the indices 
$i_1,\dots,i_{k-1}$ in many equations.

\begin{remark}
In terms of Sweedler's notation, we may write \eqref{product} as
\[(f\odot g)(\boldX)=\sum_{(\boldX)} (-1)^{\degree{g} 
\cdot \degree{\boldX_{(1)}}} f(\boldX_{(1)}) g(\boldX_{(2)})\]
and \eqref{action} as
\[(f\cdot F)(\boldX)=\sum_{(\boldX)}(-1)^{\degree{F}
\cdot \degree{\boldX_{(1)}}} f(\boldX_{(1)})F(\boldX_{(2)}), \]
where $f,g\in\Hom(S(\frakg[1]),\KK)$, $F\in\Hom(S(\frakg[1]),\frakM)$,
$\boldX\in S(\frakg[1])$ are homogeneous elements
and $\Delta\boldX=\sum\limits_{(\boldX)}\boldX_{(1)}\otimes\boldX_{(2)}$.
\end{remark}

We now proceed to describe the Kapranov $L_\infty[1]$ algebra
of the dg manifold $(\frakg[1],d_{\CE})$.
Recall that $Q=d_{\CE}$ is defined by
\begin{equation}\label{eq:Q}
Q(f)=d_{\CE}(f) = -(-1)^{\degree{f}}f\circ \widetilde{Q}
\end{equation}
for any homogeneous element $f\in \Hom(S(\frakg[1]), \KK)
\cong C^{\infty}(\frakg[1])$.

Let $\{e_{1},\cdots, e_{d}\}$ be a basis
of $\frakg[1]$ and $\{x^{1},\cdots, x^{d}\}$ its
induced coordinate functions on $\frakg[1]$ satisfying
\[ x^{i}(e_{j})= \duality{ x^{i}}{e_{j}}=\left\{
\begin{array}{ll}
1 & \text{if }\, i=j\\
0 & \text{if } \, i\neq j
\end{array}
\right. .\]
We also use the notation
\begin{equation}\label{eq:ij}
\frac{\partial}{\partial x^{j}}x^{i}:=(-1)^{\degree{x^{i}}\cdot
\degree{x^{j}}} \duality{x^{i}}{e_{j}}
.\end{equation}

\begin{lemma}\label{lem:Q}
Under the above notation, write the multibrackets as
\[ q_{k}(e_{i_{1}},\cdots, e_{i_{k}})=\sum_j c_{i_{1}\cdots i_{k}}^{j}e_{j},
\quad \forall k\geq 1 .\]
Then the homological vector field $Q\in \XX(\frakg[1])$ can be
written as
\[ Q=-\sum_j \sum_{k=1}^{\infty} \frac{1}{k!} c_{i_{1}\cdots i_{k}}^{j}x^{i_{k}}
\odot \cdots \odot x^{i_{1}}\frac{\partial}{\partial x^{j}} .\]
Here, we are making tacit use of the Einstein summation convention for the indices $i_1,\dots,i_k$.
\end{lemma}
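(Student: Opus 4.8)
The plan is to exploit that $Q$ is a derivation of the (completed) graded algebra $C^\infty(\frakg[1])\cong\widehat{S}((\frakg[1])^\vee)$, hence is determined by its values on the algebra generators $x^1,\dots,x^d$. Writing $Q=\sum_j Q^j\,\frac{\partial}{\partial x^j}$ and applying both sides to $x^i$, the sign convention~\eqref{eq:ij} for $\frac{\partial}{\partial x^j}$ gives $\big(\sum_j Q^j\frac{\partial}{\partial x^j}\big)(x^i)=(-1)^{\degree{x^i}}Q^i$, so that $Q^i=(-1)^{\degree{x^i}}Q(x^i)$. It therefore suffices to compute each function $Q(x^i)\in C^\infty(\frakg[1])$ and then re-express it as a power series in the coordinates.

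First I would compute $Q(x^i)$ from~\eqref{eq:Q}, namely $Q(x^i)=-(-1)^{\degree{x^i}}\,x^i\circ\widetilde{Q}$, and identify the function $x^i\circ\widetilde{Q}\in\Hom(S(\frakg[1]),\KK)$ by evaluating it on the basis monomials $e_{j_1}\odot\cdots\odot e_{j_n}$ of $S(\frakg[1])$. Viewed inside $\Hom(S(\frakg[1]),\KK)$, the coordinate $x^i$ is supported on $S^1(\frakg[1])$, so only the $S^1$-component of $\widetilde{Q}(e_{j_1}\odot\cdots\odot e_{j_n})$ contributes. By the coderivation formula~\eqref{eq:coder} of Lemma~\ref{coderivation}, a summand $q_k(\cdots)\odot(\cdots)$ lands in $S^{\,n-k+1}$, so the unique term lying in $S^1$ is the one with $k=n$, equal to $q_n(e_{j_1}\odot\cdots\odot e_{j_n})$. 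Hence $(x^i\circ\widetilde{Q})(e_{j_1}\odot\cdots\odot e_{j_n})=\duality{x^i}{q_n(e_{j_1}\odot\cdots\odot e_{j_n})}=c^i_{j_1\cdots j_n}$, and combining with the sign above the two factors of $(-1)^{\degree{x^i}}$ cancel to give $Q^i=-(x^i\circ\widetilde{Q})$.

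It then remains to convert the function $x^i\circ\widetilde{Q}$, whose value on $e_{j_1}\odot\cdots\odot e_{j_n}$ is $c^i_{j_1\cdots j_n}$, back into a series in $\widehat{S}((\frakg[1])^\vee)$. Using the duality pairing $\contraction{\argument}{\argument}$ together with the graded symmetry of $c^i_{j_1\cdots j_n}$ (inherited from $q_n$ being defined on $S^n(\frakg[1])$), one checks that $x^i\circ\widetilde{Q}=\sum_{n\geq 1}\frac{1}{n!}\,c^i_{j_1\cdots j_n}\,x^{j_n}\odot\cdots\odot x^{j_1}$, the factor $\frac{1}{n!}$ compensating for the $n!$ permutations arising in the pairing of an $n$-fold symmetric product. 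Substituting this into $Q=\sum_j Q^j\frac{\partial}{\partial x^j}=-\sum_j (x^j\circ\widetilde{Q})\frac{\partial}{\partial x^j}$ and relabelling the output index $j$ and the input indices $i_1,\dots,i_k$ yields the asserted formula. I expect the main obstacle to be precisely this last step: tracking the Koszul signs produced by the duality pairing and by the reversed ordering $x^{i_k}\odot\cdots\odot x^{i_1}$ against the graded symmetry of the structure constants $c^j_{i_1\cdots i_k}$, so that all signs cancel and exactly the normalizing factor $\frac{1}{k!}$ survives.
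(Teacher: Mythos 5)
Your proposal is correct and follows essentially the same route as the paper's own proof: determine $Q$ through its values on the coordinate functions, use~\eqref{eq:Q} to reduce $Q(x^i)$ to pairing $x^i$ against $\widetilde{Q}$ on basis monomials (where, as you note, only the top component $q_n$ survives), and then reassemble the structure constants into the power series with the $1/k!$ normalization. The paper's version is merely terser about why only the $k=n$ term of the coderivation formula contributes; your elaboration of that point and of the final sign bookkeeping is consistent with it.
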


\begin{proof}
As a vector field, $Q$ can be written as
$Q=\sum_j Q^{j}\frac{\partial}{\partial x^{j}}$
for some $Q^{j}\in C^{\infty}(\frakg[1])$. Then,
as a derivation of $C^{\infty}(\frakg[1])$, $Q$ satisfies
$Q(x^{j})=(-1)^{\degree{x^{j}}} Q^{j}$ according to~\eqref{eq:ij}.
On the other hand, according to \eqref{eq:Q}, we have
\begin{align*}
\duality{Q(x^{j})}{e_{i_{1}}\odot \cdots \odot e_{i_{k}}} 
& = -(-1)^{\degree{x^{j}}} \duality{x^{j}}{\widetilde{Q}(e_{i_{1}}\odot \cdots \odot e_{i_{k}})}\\
&=-(-1)^{\degree{x^{j}}} c_{i_{1}\cdots i_{k}}^{j}
\end{align*}
for any $k\geq 1$.

Therefore, we may conclude that
\[ Q^{j}=-\sum_{k=1}^{\infty} \frac{1}{k!} c_{i_{1}\cdots 
i_{k}}^{j} x^{i_{k}}\odot \cdots \odot x^{i_{1}} .\]
This completes the proof.
\end{proof}

Note that we have a canonical trivialization of the tangent bundle
\begin{equation}\label{eq:NYC}
\tangent{\frakg[1]}\cong \frakg[1]\times \frakg[1].
\end{equation}
Hence, we have the following identification
\begin{equation}\label{map:vectorfields}
\begin{aligned}
\smooth{\frakg[1]}\otimes\frakg[1]\from
&\XX(\frakg[1])\to\Hom\left(S(\frakg[1]),\frakg[1]\right) \\
f\otimes e_{i}\mapsfrom & \ \ f \frac{\partial}{\partial x^{i}}\ 
\mapsto \left(\boldX\mapsto (-1)^{\degree{e_{i}}\cdot 
\degree{\boldX}}\duality{f}{\boldX}\cdot e_{i} \right),
\end{aligned}
\end{equation}
where $f\in\Hom(S(\frakg[1]),\KK)\cong C^{\infty}(\frakg[1])$
is homogeneous and $\boldX\in S(\frakg[1])$.

\begin{lemma}\label{lem:Liederivative}
Under the identification \eqref{map:vectorfields},
the Lie derivative $\liederivative{Q}=[Q,\argument] \in\End(\XX(\frakg[1]))$
corresponds to the Chevalley--Eilenberg differential $d_{\CE}^{\frakg[1]}$,
where $\frakg[1]$ acts on
$\frakg[1]$ by the adjoint action.
\end{lemma}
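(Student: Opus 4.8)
The plan is to use that both $\liederivative{Q}$ and $d_{\CE}^{\frakg[1]}$ are ``derivations'' over the single map $Q=d_{\CE}$ acting on functions, so that the comparison reduces to a single generator-wise check. Denote by $\Psi$ the isomorphism \eqref{map:vectorfields}. First I would record that $\Psi$ is an isomorphism of $C^\infty(\frakg[1])$-modules and that, setting $f=1$ in \eqref{map:vectorfields}, it carries the coordinate vector field $\frac{\partial}{\partial x^i}$ to the constant cochain $e_i\in\frakg[1]\cong\Hom(S^0(\frakg[1]),\frakg[1])$ (indeed $\duality{1}{\boldX}=\epsilon(\boldX)$ is supported on $S^0$). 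Since the trivialization \eqref{eq:NYC} exhibits $\XX(\frakg[1])$ as the free $C^\infty(\frakg[1])$-module on the $\frac{\partial}{\partial x^i}$, the whole statement will follow from its validity on these generators.

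Next I would compare the Leibniz rules. Because $Q$ has degree $+1$, the vector-field bracket gives $\liederivative{Q}(fX)=Q(f)\,X+(-1)^{\degree f}f\,\liederivative{Q}(X)$, whereas $d_{\CE}^{\frakg[1]}$ obeys \eqref{eq:Leibnizrule}. Since $Q(f)=d_{\CE}(f)$ on functions and $\Psi$ is $C^\infty(\frakg[1])$-linear, the $\KK$-linear defect $\Theta:=\Psi\circ\liederivative{Q}-d_{\CE}^{\frakg[1]}\circ\Psi$ satisfies $\Theta(fX)=(-1)^{\degree f}f\cdot\Theta(X)$, the two ``function'' terms cancelling. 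Hence $\Theta$ vanishes on all of $\XX(\frakg[1])=\sum_i C^\infty(\frakg[1])\frac{\partial}{\partial x^i}$ as soon as $\Theta\big(\frac{\partial}{\partial x^i}\big)=0$ for every $i$; equivalently, it remains only to show that $\Psi$ sends $\liederivative{Q}\big(\frac{\partial}{\partial x^i}\big)$ to $d_{\CE}^{\frakg[1]}(e_i)$.

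The generator-wise check is a direct coordinate computation based on Lemma~\ref{lem:Q}. Since the coordinate vector fields commute, $\big[\frac{\partial}{\partial x^i},Q\big]=\frac{\partial Q^j}{\partial x^i}\frac{\partial}{\partial x^j}$, whence $\liederivative{Q}\big(\frac{\partial}{\partial x^i}\big)=-(-1)^{\degree{e_i}}\frac{\partial Q^j}{\partial x^i}\frac{\partial}{\partial x^j}$ by graded antisymmetry of the bracket. Applying $\frac{\partial}{\partial x^i}$ to $Q^j=-\sum_k\frac{1}{k!}c^j_{i_1\cdots i_k}x^{i_k}\odot\cdots\odot x^{i_1}$ and using that $c^j_{i_1\cdots i_k}$ is graded symmetric in its lower indices (this encodes $q_k\colon S^k(\frakg[1])\to\frakg[1]$), the $k$ equal contributions collapse the factor $\frac{1}{k!}$ to $\frac{1}{(k-1)!}$. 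Transporting the resulting vector field through $\Psi$ and identifying the coordinate functions $x^i$ with the dual basis $\xi^i$, I expect to recover precisely the series \eqref{eq:ORD}, that is $\sum_{k\geq1}\frac{1}{(k-1)!}\,\xi^{i_{k-1}}\odot\cdots\odot\xi^{i_1}\otimes q_k(e_{i_1}\odot\cdots\odot e_{i_{k-1}}\odot e_i)=d_{\CE}^{\frakg[1]}(e_i)$, completing the proof.

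The only genuine difficulty is the Koszul-sign bookkeeping: one must reconcile the convention \eqref{eq:ij} defining $\frac{\partial}{\partial x^i}$, the Koszul factor $(-1)^{\degree{e_i}\degree{\boldX}}$ built into $\Psi$, the graded antisymmetry sign $-(-1)^{\degree{e_i}}$ of the vector-field bracket, and the signs implicit in \eqref{eq:ORD}. One may instead argue conceptually, identifying $\XX(\frakg[1])$ with $\coDer(S(\frakg[1]))$ by transposition and $\liederivative{Q}$ with the commutator $[\widetilde{Q},\argument]$, which is exactly the adjoint Chevalley--Eilenberg differential; but the same signs must still be verified, so the computational route above is the most transparent.
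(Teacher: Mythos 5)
Your proposal is correct and follows essentially the same route as the paper's proof: reduce to the coordinate generators $\frac{\partial}{\partial x^i}$ by comparing the Leibniz rules of $\liederivative{Q}$ and $d_{\CE}^{\frakg[1]}$ over $Q(f)=d_{\CE}(f)$, then compute $\liederivative{Q}\big(\frac{\partial}{\partial x^i}\big)=-(-1)^{\degree{\frac{\partial}{\partial x^i}}}\frac{\partial Q^j}{\partial x^i}\frac{\partial}{\partial x^j}$ via Lemma~\ref{lem:Q}, collapse $\frac{1}{k!}$ to $\frac{1}{(k-1)!}$ using the symmetry of the structure constants, and match the result against \eqref{eq:ORD}. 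The sign bookkeeping you flag does work out (the factor $(-1)^{\degree{\frac{\partial}{\partial x^i}}+\degree{x^i}}$ equals $1$), exactly as in the paper.
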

\begin{proof}
Recall that the Chevalley--Eilenberg differential $d_{\CE}^{\frakg[1]}$ on
$\frakg[1]$ satisfies \eqref{eq:Leibnizrule}.
On the other hand, we have
\[ \liederivative{Q}(f\cdot F)=\left[ Q, f\cdot F \right] 
=Q(f)\cdot F +(-1)^{\degree{f}}f\cdot [Q,F] 
=Q(f)\cdot F +(-1)^{\degree{f}}f\cdot \liederivative{Q}(F), \]
for any homogeneous element $f\in\smooth{\frakg[1]}\cong\Hom(S(\frakg[1]),\KK)$
and $F\in\XX(\frakg[1])\cong\Hom(S(\frakg[1]),\frakg[1])$.
Since $Q(f)=d_{\CE} (f)$ according to Equation~\eqref{eq:Q},
it suffices to prove the claim for each
$\frac{\partial}{\partial x^{i}}$, $i=1,\ldots,d$.

We keep the notation $Q=\sum_j Q^{j}\frac{\partial}{\partial x^{j}}$.
Now, by Lemma~\ref{lem:Q}, we have
\begin{align*}
\liederivative{Q} \left( \frac{\partial}{\partial x^{i}} \right) &=
-(-1)^{\degree{\frac{\partial}{\partial x^{i}}}}\sum_j
\frac{\partial}{\partial x^{i}}(Q^{j})\frac{\partial}{\partial x^{j}}\\
&= -(-1)^{\degree{\frac{\partial}{\partial x^{i}}}}
\left( -\sum_j\sum_{k=1}^{\infty}\frac{1}{k!} c_{i_{1}\cdots i_{k}}^{j}
\frac{\partial(x^{i_{k}}\odot \cdots\odot x^{i_{1}})}{\partial x^{i}}
\frac{\partial}{\partial x^{j}}\right) \\
&=(-1)^{\degree{ \frac{\partial}{\partial x^{i}}}+\degree{x^{i}}}
\sum_j\sum_{k=1}^{\infty}\frac{1}{(k-1)!}
c_{i_{1}\cdots i_{k-1} i}^{j} x^{i_{k-1}}\odot \cdots \odot x^{i_{1}}
\frac{\partial}{\partial x^{j}}\\
&=\sum_j\sum_{k=1}^{\infty}\frac{1}{(k-1)!}
c_{i_{1}\cdots i_{k-1} i}^{j} x^{i_{k-1}}\odot \cdots \odot x^{i_{1}}
\frac{\partial}{\partial x^{j}}
.\end{align*}
The conclusion thus follows immediately by comparing the equation above
with~\eqref{eq:ORD}.
\end{proof}

The trivialization of the tangent bundle
\eqref{eq:NYC} induces an isomorphism
\[ \tangent{\frakg[1]}^\vee\otimes\End(\tangent{\frakg[1]})
\xto{\cong} \frakg[1] \times \big((\frakg[1])^\vee \otimes
(\frakg[1])^\vee \otimes \frakg[1] \big) \]
of vector bundles. Lemma~\ref{lem:Liederivative}, comparing with~\eqref{eq:cQ},
indicates that we have an isomorphism of cochain complexes:
\[ \big( \sections{ \frakg[1]; T^\vee_{\frakg[1]}\otimes
\End(\tangent{\frakg[1]})}^{\bullet}, \cQ \big) 
\xto{\cong} \big(\Hom^\bullet (S(\frakg[1]), \frakM ), d_{\CE}^{\frakM}\big) ,\]
where $\frakM=(\frakg[1])^{\vee}\otimes (\frakg[1])^{\vee}\otimes \frakg[1]$
is the tensor product of adjoint and coadjoint modules.

Thus we have the following

\begin{corollary}\label{cor:Qcohomology}
Let $(\cM,Q)=(\frakg[1],d_{\CE})$ be the dg manifold
corresponding to a finite-dimensional $L_\infty[1]$ algebra
$\frakg[1]$.
There is a canonical isomorphism, for any $k\in \ZZ$,
\[ \cohomology{k}\big(\Gamma(T^\vee_{\frakg[1]} \otimes\End (T_{\frakg[1]}))^\bullet,\cQ\big) \cong
\cohomology{k}_{\CE}(\frakg[1], (\frakg[1])^\vee\otimes (\frakg[1])^\vee\otimes
\frakg[1]) \]
where the right hand side stands for the Chevalley--Eilenberg
cohomology of the $L_\infty[1]$ algebra $\frakg[1]$
with values in $(\frakg[1])^\vee\otimes (\frakg[1])^\vee\otimes
\frakg[1]$.
\end{corollary}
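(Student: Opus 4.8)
The plan is to obtain the corollary as an immediate consequence of the isomorphism of cochain complexes displayed just before its statement, using the functoriality of cohomology: an isomorphism of cochain complexes induces an isomorphism on cohomology in every degree $k$. Thus the whole mathematical content sits in justifying that displayed isomorphism
\[ \big( \sections{\frakg[1]; T^\vee_{\frakg[1]}\otimes \End(\tangent{\frakg[1]})}^{\bullet}, \cQ \big) \xto{\cong} \big(\Hom^\bullet(S(\frakg[1]), \frakM), d_{\CE}^{\frakM}\big), \]
where $\frakM=(\frakg[1])^\vee \otimes (\frakg[1])^\vee \otimes \frakg[1]$ is the (co)adjoint module, and in recording that passing to cohomology then yields the asserted isomorphism.

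First I would fix the underlying isomorphism of graded modules. The canonical trivialization~\eqref{eq:NYC} identifies $\tangent{\frakg[1]} \cong \frakg[1] \times \frakg[1]$; since $\End(\tangent{\frakg[1]}) = \tangent{\frakg[1]}^\vee \otimes \tangent{\frakg[1]}$, dualizing two of the factors gives $\tangent{\frakg[1]}^\vee \otimes \End(\tangent{\frakg[1]}) \cong \frakg[1] \times \frakM$. Combined with the identification~\eqref{map:vectorfields}, this exhibits $\Hom(S(\frakg[1]), \frakM)$ as the common underlying graded module of the two sides.

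The crux is then to match the differentials, i.e.\ to verify that $\cQ$ from~\eqref{eq:cQ} corresponds to $d_{\CE}^{\frakM}$. The operator $\cQ$ is of Lie-derivative type, acting at once on the output (through $[Q, F(X,Y)]$) and on the two vector-field inputs (through $[Q,X]$ and $[Q,Y]$); equivalently, with respect to the tensor decomposition $\tangent{\frakg[1]}^\vee \otimes \tangent{\frakg[1]}^\vee \otimes \tangent{\frakg[1]}$ it is a graded derivation acting factorwise. Lemma~\ref{lem:Liederivative} already identifies its action on the $\tangent{\frakg[1]}$-factor with the Chevalley--Eilenberg differential for the adjoint module, and dualizing that same computation identifies its action on each $\tangent{\frakg[1]}^\vee$-factor with the coadjoint action. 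Since $d_{\CE}^{\frakM}$ is likewise the CE differential of the triple tensor-product module $\frakM$, hence the graded derivation with these same factorwise actions, the two operators coincide.

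The main obstacle is the Koszul-sign bookkeeping in this derivation check: one must confirm that the three sign-laden terms of~\eqref{eq:cQ} are exactly those produced by the Leibniz rule for $d_{\CE}^{\frakM}$ over $(\frakg[1])^\vee \otimes (\frakg[1])^\vee \otimes \frakg[1]$, accounting for the degree shifts in the coadjoint factors. Once this is settled the two complexes are isomorphic on the nose, and applying $\cohomology{k}$ gives $\cohomology{k}\big(\sections{T^\vee_{\frakg[1]} \otimes \End(\tangent{\frakg[1]})}^\bullet, \cQ\big) \cong \cohomology{k}_{\CE}(\frakg[1], \frakM)$ for every $k$, which is the claim.
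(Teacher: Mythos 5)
Your proposal is correct and follows essentially the same route as the paper: the paper likewise uses the trivialization~\eqref{eq:NYC} together with the identification~\eqref{map:vectorfields} and Lemma~\ref{lem:Liederivative} to exhibit the displayed isomorphism of cochain complexes with $\big(\Hom^\bullet(S(\frakg[1]),\frakM),d_{\CE}^{\frakM}\big)$ for the (co)adjoint module $\frakM=(\frakg[1])^\vee\otimes(\frakg[1])^\vee\otimes\frakg[1]$, and then passes to cohomology. Your write-up merely makes explicit the factorwise-derivation comparison of $\cQ$ with $d_{\CE}^{\frakM}$ that the paper leaves to the reader.
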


\begin{remark}
It is sometimes useful to use the Chevalley--Eilenberg cohomology
of $L_\infty$ algebra rather than $L_\infty[1]$ algebra.
Then Corollary~\ref{cor:Qcohomology} can be rephrased as follows.

For any finite-dimensional $L_\infty$ algebra
$\frakg$, there is a canonical isomorphism, for any $k\in\ZZ$,
\[ \cohomology{k}\big(\Gamma(T^\vee_{\frakg[1]}\otimes\End(T_{\frakg[1]}))^\bullet,\cQ\big)
\cong\cohomology{k-1}_{\CE}(\frakg,\frakg^\vee\otimes\frakg^\vee\otimes\frakg), \]
where the right hand side stands for the Chevalley--Eilenberg
cohomology of the $L_\infty$ algebra $\frakg$
with values in $\frakg^\vee\otimes \frakg^\vee\otimes \frakg$.
Note that there is a degree shifting here.
\end{remark}

We still keep the notation $d_{\CE}=Q=\sum_l Q^{l}\frac{\partial}{\partial x^{l}}$.
Let $\nabla:\XX(\frakg[1])\otimes\XX(\frakg[1])\to\XX(\frakg[1])$ be the trivial
(torsion-free) connection:
$\nabla_{\frac{\partial}{\partial x^{i}}}\frac{\partial}{\partial x^{j}}=0$.
The corresponding Atiyah $1$-cocycle 
$\atiyahcocyclegQ\in\sections{\Hom\left(S^{2}(\tangent{\frakg[1]}),\tangent{\frakg[1]}\right)}$
is completely determined by the relations
\begin{align}
\atiyahcocyclegQ\left(\frac{\partial}{\partial x^{i}},\frac{\partial}{\partial x^{j}}\right)
&=-(-1)^{\degree {x^{i}}}\nabla_{\frac{\partial}{\partial x^{i}}}\liederivative{Q}
\left(\frac{\partial}{\partial x^{j}}\right) \nonumber \\
&=\sum_l (-1)^{\degree{x^{i}}+\degree{x^{j}}}
\frac{\partial^{2}}{\partial x^{i}\partial x^{j}}(Q^{l})
\frac{\partial}{\partial x^{l}}\label{eq:atiyahg1}\\
&=\sum_l (-1)^{\degree{x^{i}}+\degree{x^{j}}} 
\frac{\partial^{2}}{\partial x^{i}\partial x^{j}}
\left( -\sum_{k=1}^{\infty}\frac{1}{k!}c^{l}_{i_{1}\cdots i_{k}}x^{i_{k}}
\odot \cdots\odot x^{i_{1}}\right)\frac{\partial}{\partial x^{l}}\nonumber \\
&=-\sum_l\sum_{k=2}^{\infty}\frac{1}{(k-2)!}c^{l}_{i_{1}\cdots i_{k-2}ij}x^{i_{k-2}}
\odot \cdots \odot x^{i_{1}}\frac{\partial}{\partial x^{l}} ,\label{eq:IAD}
\end{align}
for all $i,j\in\{1,\ldots,d\}$.

Let $\widehat{\atiyahcocyclegQ}$ be the map defined
by the following commutative diagram
\[ \begin{tikzcd}
\smooth{\frakg[1]}\otimes S^{2}(\frakg[1]) \arrow[r, "\simeq"] 
& \sections{S^{2}(\tangent{\frakg[1]})} \arrow[r, "\atiyahcocyclegQ"] 
& \XX(\frakg[1]) \arrow[d, "\simeq"] \\
S^{2}(\frakg[1]) \arrow[u, hook] \arrow[rr, "\widehat{\atiyahcocyclegQ}"]
&& \Hom(S(\frakg[1]), \frakg[1])
.\end{tikzcd} \]

Equation~\eqref{eq:IAD} implies that
\[ \widehat{\atiyahcocyclegQ} (e_i,e_j): \quad
e_{l_{1}}\odot \cdots \odot e_{l_{k}}
\mapsto -q_{k+2}(e_{i}\odot e_{j}\odot e_{l_{1}}\odot \cdots \odot e_{l_{k}}) .\]

Therefore, under the identification above, we have
\[\widehat{\atiyahcocyclegQ}(x,y): \quad \boldX\mapsto
-q_{n+2}(x\odot y \odot \boldX), \]
for any $x,y\in \frakg[1]$ and $\boldX\in S^{n}(\frakg[1])$.
Thus, by abuse of notation, we may write
\[ \atiyahcocyclegQ = -\sum_{k\geq 2} q_{k} .\]

\begin{proposition}\label{pro:Atiyah}
Let $\frakg[1]$ be an $L_{\infty}[1]$ algebra with multibrackets
$q_k:S^{k}(\frakg[1]) \to \frakg[1]$, $k\geq 1$.
Then the Atiyah class $\atiyahclass{(\frakg[1], d_{\CE})}$
of the dg manifold $(\frakg[1], d_{\CE})$
is
\[ \atiyahclass{(\frakg[1], d_{\CE})} =-\big[\sum_{k\geq 2} q_{k}\big]
\in \cohomology{1}_{\CE}\big(\frakg[1], (\frakg[1])^\vee\otimes (\frakg[1])^\vee\otimes
\frakg[1]\big) \cong
\cohomology{1}\big(\Gamma(T^\vee_{\frakg[1]}\otimes\End T_{\frakg[1]})^\bullet,\cQ\big). \]
\end{proposition}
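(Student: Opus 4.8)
The plan is to exploit the fact, recorded in Proposition~\ref{Atiyahproperty}(iii), that the Atiyah class is independent of the choice of affine connection, so that we are free to compute a representative cocycle using the trivial connection $\nabla$ attached to the canonical trivialization~\eqref{eq:NYC}, namely $\nabla_{\frac{\partial}{\partial x^{i}}}\frac{\partial}{\partial x^{j}}=0$. This connection is torsion-free, since the coordinate vector fields commute and $\nabla_{\frac{\partial}{\partial x^{i}}}\frac{\partial}{\partial x^{j}}=0=\nabla_{\frac{\partial}{\partial x^{j}}}\frac{\partial}{\partial x^{i}}$, so that $T^{\nabla}=0$. Hence Proposition~\ref{Atiyahproperty} applies and $\atiyahcocyclegQ$ is a genuine $1$-cocycle representing $\atiyahclass{(\frakg[1],d_{\CE})}$.

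Next I would carry out the evaluation of $\atiyahcocyclegQ$ on the coordinate frame, which is exactly the computation recorded in~\eqref{eq:atiyahg1}--\eqref{eq:IAD}: since $\nabla_{\frac{\partial}{\partial x^{i}}}\frac{\partial}{\partial x^{j}}=0$, the defining formula for the Atiyah cocycle collapses to $-(-1)^{\degree{x^{i}}}\nabla_{\frac{\partial}{\partial x^{i}}}\liederivative{Q}\big(\frac{\partial}{\partial x^{j}}\big)$, and Lemma~\ref{lem:Liederivative} together with Lemma~\ref{lem:Q} turns this into an explicit expression in the structure constants $c^{l}_{i_{1}\cdots i_{k}}$. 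Reading off the coefficients shows that $\atiyahcocyclegQ(e_i,e_j)$, transported through the identification~\eqref{map:vectorfields} to an element of $\Hom(S(\frakg[1]),\frakg[1])$, sends $e_{l_{1}}\odot\cdots\odot e_{l_{k}}$ to $-q_{k+2}(e_i\odot e_j\odot e_{l_{1}}\odot\cdots\odot e_{l_{k}})$; that is, $\atiyahcocyclegQ$ corresponds to $-\sum_{k\geq 2}q_{k}$ as a cochain in $\Hom\big(S(\frakg[1]),(\frakg[1])^\vee\otimes(\frakg[1])^\vee\otimes\frakg[1]\big)$.

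Finally, I would pass to cohomology. Lemma~\ref{lem:Liederivative} identifies $\liederivative{Q}$ with the Chevalley--Eilenberg differential of the adjoint module, and the resulting isomorphism of cochain complexes $\big(\sections{\frakg[1];T^\vee_{\frakg[1]}\otimes\End(T_{\frakg[1]})}^{\bullet},\cQ\big)\cong\big(\Hom^{\bullet}(S(\frakg[1]),\frakM),d_{\CE}^{\frakM}\big)$, with $\frakM=(\frakg[1])^\vee\otimes(\frakg[1])^\vee\otimes\frakg[1]$, underlying Corollary~\ref{cor:Qcohomology} carries $[\atiyahcocyclegQ]$ to $-\big[\sum_{k\geq 2}q_{k}\big]$. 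This is precisely the claimed formula.

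The only genuinely delicate point is the bookkeeping of signs and multiplicities: one must check that the Koszul signs implicit in~\eqref{map:vectorfields}, the factorial normalizations of Lemma~\ref{lem:Q}, and the comultiplication on $S(\frakg[1])$ all conspire to produce the clean coefficient $-q_{k+2}$ with no stray signs or symmetry factors. Since the second covariant derivative in~\eqref{eq:IAD} removes the factor $\tfrac{1}{k!}$ and replaces it by $\tfrac{1}{(k-2)!}$, matching exactly the multiplicity with which $q_{k}$ reappears after symmetrization, this is a routine if sign-sensitive verification rather than a conceptual obstacle; everything else is an immediate assembly of the preceding lemmas.
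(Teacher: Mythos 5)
Your proposal is correct and follows essentially the same route as the paper: the paper's proof of Proposition~\ref{pro:Atiyah} is precisely the computation in~\eqref{eq:atiyahg1}--\eqref{eq:IAD} with the trivial torsion-free connection, followed by the transport through~\eqref{map:vectorfields} and the cohomological identification of Corollary~\ref{cor:Qcohomology}. Your remarks on connection-independence and on the factorial bookkeeping match the paper's argument, so nothing further is needed.
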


\begin{remark}
We can rephrase Proposition~\ref{pro:Atiyah} in terms of
multibrackets of $L_{\infty}$ algebra $\frakg$ instead of
$L_{\infty}[1]$ algebra $\frakg[1]$.
For a finite dimensional $L_{\infty}$ algebra $\frakg$
equipped with multibrackets $l_{k}:\Lambda^{k}\frakg\to\frakg$
of degree $2-k$ for $k\geq 1$,
the Atiyah class $\atiyahclass{(\frakg[1],d_{\CE})}$
of the dg manifold $(\frakg[1],d_{\CE})$ is
\[ \atiyahclass{(\frakg[1], d_{\CE})} =\big[\sum_{k\geq 2} l_{k}\big]
\in \cohomology{0}_{\CE}\big(\frakg, \frakg^\vee\otimes \frakg^\vee\otimes
\frakg\big) \cong
\cohomology{1}\big(\Gamma(T^\vee_{\frakg[1]}\otimes\End T_{\frakg[1]})^\bullet,\cQ\big) ,\]
where $\cohomology{0}_{\CE}\big(\frakg, \frakg^\vee\otimes \frakg^\vee\otimes
\frakg\big)$ denotes the $0$-th Chevalley--Eilenberg cohomology of
the $L_{\infty}$ algebra $\frakg$ with values in
the tensor product of adjoint and coadjoint modules 
$\frakg^\vee\otimes \frakg^\vee\otimes\frakg$.
\end{remark}	

Since the trivial connection $\nabla$ is flat, by the second part
of Theorem~\ref{theorem2}, we know that
\[R_{n}= \frac{1}{n}\widetilde{\coder} R_{n-1}
\in\sections{\Hom(S^{n}(\tangent{\frakg[1]}), \tangent{\frakg[1]})}\]
for $n\geq 3$.
As the connection $\nabla$ is trivial,
Equation~\eqref{eq:symcovariantderivative} implies that
\begin{align*}
\widetilde{\coder}R_{n-1} \left( \frac{\partial}{\partial x^{i_{1}}}\odot
\cdots\odot\frac{\partial}{\partial x^{i_{n}}}\right)&=
\sum_{k=1}^{n}\sign_{k}(-1)^{\degree {x^{i_{k}}}}\nabla_{\frac{\partial}{\partial x^{i_{k}}}}
\left(R_{n-1}\left(\frac{\partial}{\partial x^{i_{1}}}\odot
\cdots \widehat{\frac{\partial}{\partial x^{i_{k}}}} \cdots
\odot \frac{\partial}{\partial x^{i_{n}}}\right) \right) \\
&= \sum_{k=1}^{n}\sign_{k}(-1)^{\degree{x^{i_{k}}}}
\frac{\partial}{\partial x^{i_{k}}}\left(R_{n-1} 
\left(\frac{\partial}{\partial x^{i_{1}}}\odot \cdots 
\widehat{\frac{\partial}{\partial x^{i_{k}}}} \cdots \odot 
\frac{\partial}{\partial x^{i_{n}}}\right) \right)
\end{align*}

Here, $\epsilon_{k}=(-1)^{\degree{x^{i_{k}}}\left(\degree{x^{i_{1}}}\cdots
+\degree{x^{i_{k-1}}}\right)}$ is the Koszul sign.
Starting from
\[ R_{2}\left(\frac{\partial}{\partial x^{i_{1}}}\odot
\frac{\partial}{\partial x^{i_{2}}}\right)
= -(-1)^{\degree{x^{i_{1}}}+\degree{x^{i_{2}}}}
\sum_j\frac{\partial^{2}Q^{j}}{\partial x^{i_{1}}
\partial x^{i_{2}}}\frac{\partial}{\partial x^{j}} ,\]
as in~\eqref{eq:atiyahg1}, we inductively obtain that
\[R_{n}\left(\frac{\partial}{\partial x^{i_{1}}}\odot\cdots\odot 
\frac{\partial}{\partial x^{i_{n}}}\right)
=-(-1)^{\degree{x^{i_{1}}}+\cdots + \degree{x^{i_{n}}}} \sum_j
\frac{\partial ^{n}Q^{j}}{\partial x^{i_{1}}\cdots
\partial x^{i_{n}}}\frac{\partial}{\partial x^{j}}.\]

According to Corollary~\ref{cor:main}, we obtain the following

\begin{proposition}\label{thm:Linfty}
Let $\frakg[1]$ be a finite dimensional
$L_{\infty}[1]$ algebra with multibrackets
$q_k:S^{k}(\frakg[1]) \to \frakg[1]$, $k\geq 1$.
Let $(\cM,Q)=(\frakg[1], d_{\CE})$ be its corresponding
dg manifold. Choose the trivial connection.
Then the multibrackets $\{\lambda_n\}_{n\geq 1}$
of the Kapranov $L_{\infty}[1]$ algebra
structure on $\Hom \left(S(\frakg[1]),\frakg[1] \right) \cong
\widehat{S}(\frakg[1])^\vee \otimes \frakg[1]$, being identified with
$\XX(\frakg[1])$ as in Equation~\eqref{map:vectorfields}, are given as follows.
\begin{enumerate}
\item The unary bracket $\lambda_1$ coincides with the
Chevalley--Eilenberg differential with values in the $L_\infty[1]$-adjoint module
$\frakg[1]$:
\[ \lambda_1=d_{\CE}^{\frakg[1]}: \widehat{S}(\frakg[1])^\vee \otimes \frakg[1]\to
\widehat{S}(\frakg[1])^\vee \otimes \frakg[1] \]
\item For any $n\geq 2$, $\lambda_n$ is
$\widehat{S}(\frakg[1])^\vee$-linear in each of its $n$ argument,
and therefore can be considered as a linear map
\[ \lambda_n: S^n (\frakg[1])
\to \widehat{S}(\frakg[1])^\vee \otimes \frakg[1] \]
which is completely determined by
\[ \lambda_{n}(\boldX)=\sum_{k=n}^{\infty}q_k(\boldX\odot\argument),
\quad n\geq 2 ,\]
where $\boldX\in S^{n}(\frakg[1])$, and each
$q_k(\boldX\odot\argument):S^{k-n}(\frakg[1])\to\frakg[1]$
is defined by
$\boldY\mapsto q_k(\boldX\odot\boldY)$
for all $\boldY\in S^{k-n}(\frakg[1])$.
\end{enumerate}
\end{proposition}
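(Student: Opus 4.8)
The plan is to handle the unary bracket and the higher brackets separately, since each has essentially been prepared by the preceding material. For the unary bracket, I would invoke Theorem~\ref{cor:main}, according to which $\lambda_1$ is the Lie derivative $\liederivative{Q}$ along the homological vector field; under the trivialization \eqref{map:vectorfields}, Lemma~\ref{lem:Liederivative} identifies $\liederivative{Q}$ with the Chevalley--Eilenberg differential $d_{\CE}^{\frakg[1]}$ valued in the adjoint module. This establishes part (1) at once.

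For part (2), the first step is the claimed $\widehat{S}(\frakg[1])^\vee$-linearity. This follows from the remark accompanying Theorem~\ref{cor:main}, which asserts that every multibracket $\lambda_k$ with $k\geq 2$ is $C^\infty(\cM)$-multilinear; since $C^\infty(\frakg[1])\cong\widehat{S}(\frakg[1])^\vee$, each $\lambda_n$ with $n\geq 2$ descends to an $\widehat{S}(\frakg[1])^\vee$-linear map and is therefore completely determined by its restriction to $S^n(\frakg[1])$, i.e.\ by the tensor $R_n$.

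The remaining task is to identify $R_n$ explicitly in terms of the multibrackets $q_k$. Here I would use that the trivial connection is flat, so that Theorem~\ref{theorem2}(ii) applies and the coordinate computation carried out just above gives
\[ R_n\left(\frac{\partial}{\partial x^{i_1}}\odot\cdots\odot\frac{\partial}{\partial x^{i_n}}\right)=-(-1)^{\degree{x^{i_1}}+\cdots+\degree{x^{i_n}}}\frac{\partial^n Q^j}{\partial x^{i_1}\cdots\partial x^{i_n}}\frac{\partial}{\partial x^j}. \]
Substituting the expression for $Q^j$ from Lemma~\ref{lem:Q}, I would compute the $n$-fold partial derivative (the factor $\frac{1}{k!}$ combining with the number of ways to differentiate so as to leave $\frac{1}{(k-n)!}$) and translate the resulting monomial in the $x^i$ through the duality pairing and the identification \eqref{map:vectorfields}, exactly as in the passage from \eqref{eq:atiyahg1} to \eqref{eq:IAD} for the Atiyah cocycle. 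The outcome is that $\widehat{R_n}(e_{i_1},\dots,e_{i_n})$ sends $e_{l_1}\odot\cdots\odot e_{l_m}$ to $q_{n+m}(e_{i_1}\odot\cdots\odot e_{i_n}\odot e_{l_1}\odot\cdots\odot e_{l_m})$, which is precisely the asserted formula $\lambda_n(\boldX)=\sum_{k=n}^\infty q_k(\boldX\odot\argument)$.

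The main obstacle I anticipate is purely one of bookkeeping: carrying the Koszul signs correctly through the $n$-fold differentiation and through the duality pairing, and checking that the overall sign in the $R_n$ formula cancels the sign in $Q^j=-\sum(\cdots)$ so that the final formula carries a $+$ and not a $-$. For $n=2$ this is exactly the consistency check $R_2=-\atiyahcocyclegQ=\sum_{k\geq 2}q_k$ already recorded above, which gives confidence that the signs work out in general. No conceptual difficulty remains once the explicit formula for $R_n$ is available, since the substantive induction has already been performed in Theorem~\ref{theorem2}(ii).
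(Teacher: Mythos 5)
Your proposal is correct and follows essentially the same route as the paper: part (1) via Theorem~\ref{cor:main} combined with Lemma~\ref{lem:Liederivative}, and part (2) via the $C^\infty(\cM)$-multilinearity of the higher brackets, the flatness of the trivial connection feeding into Theorem~\ref{theorem2}(ii), the resulting coordinate formula $R_n = -(-1)^{\sum\degree{x^{i_j}}}\,\partial^n Q^j/\partial x^{i_1}\cdots\partial x^{i_n}\,\partial/\partial x^j$, and the translation through \eqref{map:vectorfields} exactly as in the passage from \eqref{eq:atiyahg1} to \eqref{eq:IAD}. The sign consistency check against $R_2=-\atiyahcocyclegQ=\sum_{k\geq 2}q_k$ that you flag is precisely the anchor the paper uses for its induction, so nothing further is needed.
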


\begin{example}
If $\frakg$ is a finite dimensional Lie algebra, then
the Kapranov $L_{\infty}$ algebra (i.e.\ $(-1)$-shifted Kapranov $L_{\infty}[1]$
algebra) of the dg manifold $(\frakg[1],d_{\CE})$
is the dgla $\Lambda \frakg^\vee \otimes \frakg$,
where the differential is the Chevalley--Eilenberg differential $d^{\frakg}_{\CE}$
of the $\frakg$-module $\frakg$ (for the adjoint action),
and the Lie bracket is $[\xi\otimes x,\eta\otimes y]=
\xi\wedge\eta\otimes [x,y]$ for homogeneous 
$\xi,\eta\in\Lambda\frakg^{\vee}$ and $x,y\in\frakg$.
\end{example}

\subsection{dg manifolds associated to complex manifolds
and integrable distributions}

Every complex manifold $X$ determines a dg 
manifold $(T_{X}^{0,1}[1],\bar{\partial})$---see Example~\ref{example-two}.
This section is devoted to the description 
of the corresponding Kapranov
$L_\infty[1]$ algebra.
Recall that for a Kähler manifold $X$, Kapranov obtained an
explicit description of an $L_\infty[1]$ algebra structure
on the Dolbeault complex $\OO^{0,\bullet}(T\oz_X)$,
where the unary bracket is the Dolbeault
operator $\bar{\partial}$
and the binary bracket is the Dolbeault cocycle of
the Atiyah class of $T_X$ \cite[Theorem~2.6]{MR1671737}.
Kapranov proved the existence of an $L_\infty[1]$ algebra
structure associated to the Atiyah class of the
holomorphic tangent bundle of any complex manifold
using formal geometry and PROP \cite[Theorem~4.3]{MR1671737}.
See Theorem~\ref{thm:complex}
below for the Dolbeault representations.
Since $T_X^{0,1} \subset T_\CC X$ is a complex integrable
distribution, we will consider general integrable distributions over $\KK$.
Indeed such $L_\infty[1]$ algebra
structures can be obtained in a more general perspective in terms
of Lie pairs \cite{MR4271478}. We recall its construction briefly below.

Let $F \subseteq {\TkM}$ be an integrable distribution.
Then $(F[1], d_F)$ is a dg manifold,
whose algebra of smooth functions $C^\infty({F[1]},\KK)$ is
identified with $\OmegaF:= \Gamma(\wedge F^\vee)$ and
the homological vector field is the
leafwise de Rham differential, i.e.\ the Chevalley--Eilenberg
differential $d_F \colon \Omega_F^\bullet \to \Omega_F^{\bullet+1}$ of
the Lie algebroid $F$.
It is well known that the normal bundle $B:= {\TkM}/F$ is naturally
an $F$-module, where the $F$-action is known as the \bfemph{Bott
connection}~\cite{MR3439229}, defined by
\[ \nabla^{\operatorname{Bott}}_a b=\prB([a,\tilde{b}]) ,\] 
for all $a \in \Gamma(F)$, $b \in \Gamma(B)$ and
$\tilde{b} \in \Gamma({\TkM})$ such that $\prB(\tilde{b}) = b$.
Here $\prB: \TkM\to B$ denotes the canonical projection.	
Let $\cD(M)$ be the space of $\KK$-linear differential operators on $M$,
and $R=C^\infty(M;\KK)$ be the space of $\KK$-valued smooth functions on $M$.
Then $\cD(M)$ is an $R$-coalgebra equipped with the standard coproduct
\begin{equation}\label{Eq:COPRODUCTonDM}
\Delta\colon\cD(M)\to\cD(M)\otimes_{R}\cD(M).
\end{equation}
Let $\cD(M)\Gamma(F)\subseteq\cD(M)$ be the left ideal of $\cD(M)$
generated by $\Gamma(F)$. Since
\[ \Delta(\cD(M)\Gamma(F))\subseteq\cD(M)\otimes_R\cD(M)\Gamma(F)
+\cD(M)\Gamma(F) \otimes_R\cD(M) ,\]
the coproduct \eqref{Eq:COPRODUCTonDM}
descends to a well-defined coproduct over $R$
\begin{equation}\label{Eq:COPRODUCTonDB}
\Delta:\cD(B)\to\cD(B)\otimes_{R}\cD(B),
\end{equation}
on the quotient space
$\cD(B):=\frac{\cD(M)}{\cD(M)\Gamma(F)}$.
Hence $\cD(B)$ is an $R$-coalgebra as well, called the $R$-coalgebra
of \textit{differential operators transverse to $F$}~\cite{MR3313214}.

It is well known that $\cD(B)$ is an $F$-module \cite{MR4271478,MR2989383},
where the $F$-action is given by
\begin{equation}\label{eq:Faction}
a \cdot \overline{u} = \overline{a \circ u},
\end{equation}
for any $a\in\Gamma(F)$ and $u\in\cD(M)$ --- 
the symbol $\overline{x}$ denotes the image of $x$ under the quotient map $\cD(M)\to\cD(B)$.
Here $\circ$ denotes the composition of differential operators.
Moreover, $F$ acts on $\cD(B)$ by coderivations.
Indeed, the associated Chevalley--Eilenberg differential
\[ d_F^\cU: \quad \Omega_F^\bullet(\cD(B))\to\Omega_F^{\bullet+1}(\cD(B)) \]
is a coderivation of the $\Omega_F$-linear coproduct
\[ \Delta: \Omega_F(\cD(B))
\to\Omega_F(\cD(B))\otimes_{\Omega_F} \Omega_F(\cD(B)) \]
extending the coproduct~\eqref{Eq:COPRODUCTonDB} on $\cD(B)$.
Thus $(\Omega_F(\cD(B)),d_F^\cU,\Delta)$ is a dg coalgebra over
$(\Omega_F,d_F)$.

Let $j\colon B\to{\TkM}$ be a splitting
of the short exact sequence of vector bundles over $M$:
\begin{equation}\label{SES}
0 \to F \xrightarrow{i} {\TkM} \xrightarrow{\prB} B \to 0.
\end{equation}
Choose a torsion-free linear connection $\nabla^B$ of the vector
bundle $B$, i.e.\ a ${\TkM}$-connection on $B$ satisfying the condition:
\[ \nabla^B_X \big(\prB(Y)\big)- \nabla^B_Y \big(\prB(X)\big) -\prB\big([X,Y]\big)=0 ,\]
for any $X, Y\in \sections{\TkM}$. It is known \cite[Lemma~5.2]{MR4271478}
that a torsion-free linear connection $\nabla^B$
automatically extends the Bott representation of $F$ on $B$,
that is, $\nabla^B_a \bar{X}=\nabla^{\Bott}_a \bar{X}$,
$\forall a\in \sections{F}$ and $\bar{X}\in \sections{B}$.

According to~\cite{MR4271478,MR2989383},
the pair $(j,\nabla^B)$ determines an
isomorphism of $R$-coalgebras
\[
\overline{\pbw}\colon \Gamma(S(B)) \to \cD(B),
\]
called the PBW isomorphism for the Lie pair $({\TkM}, F)$,
which is defined recursively by the relations
\begin{align*}
&\overline{\pbw}(f) = f, \quad \forall f \in R, \\
&\overline{\pbw}(b) = \overline{j(b)} , \quad \forall b \in \Gamma(B), \\
\intertext{and}
&\overline{\pbw}(b_1 \odot \cdots \odot b_n) = \frac{1}{n}\sum_{k=1}^n \left\{ j(b_k)
\cdot \overline{\pbw}(b^{\{k\}}) - \overline{\pbw}(\nabla^B_{ j(b_k) }(b^{\{k\}})) \right\},
\end{align*}
where we keep the notation from~\eqref{eq:Faction}
and $b^{\{k\}}=b_1\odot\cdots\odot b_{k-1}\odot b_{k+1}\odot\cdots\odot b_n$. 
Extending this isomorphism of $R$-coalgebras $\OmegaF$-linearly,
we obtain an isomorphism of ${\OmegaF} $-coalgebras
\begin{equation}\label{eq:PBWLiepair}
\overline{\pbw}\colon \OmegaF(S(B)) \to \OmegaF(\cD(B))
.\end{equation}

Transferring the coderivation $d_F^\cU$ 
of $\OmegaF(\cD(B))$ to $\OmegaF(S(B))$ via
the isomorphism \eqref{eq:PBWLiepair},
we obtain a degree~$+1$ coderivation $\deltaa$ of $\OmegaF(S(B))$:
\[ \deltaa :=(\overline{\pbw})^{-1}\circ d_F^\cU\circ \overline{\pbw}:
\OmegaF^\bullet (S(B))\to \OmegaF^{\bullet+1} (S(B)) .\]
Thus
\[ \big( \OmegaF(S(B)), \deltaa, \Delta\big) \]
is a dg coalgebra over $(\Omega_F, d_F)$.

By dualizing $\deltaa$ over the dg algebra $(\Omega_F, d_F)$,
we obtain a degree $+1$ derivation
\begin{equation}\label{eq:D}
D: \OmegaF^\bullet (\widehat{S} (B^\vee)) \to\OmegaF^{\bullet+1}(\widehat{S} (B^\vee)).
\end{equation}

According to \cite[Theorem~5.7]{MR4271478},
$D$ in~\eqref{eq:D} can be expressed as
\[ D=d_{F}^{\nabla^{\Bott}}+\sum_{k=2}^{\infty}\widetilde{\cR}_k ,\]
where
\begin{enumerate}
\item $d_{F}^{\nabla^{\Bott}}$ is the Chevalley--Eilenberg
differential corresponding to the Bott connection of $F$
on $\widehat{S}\big(B^\vee\big)$;
\item for any $k\geq 2$, $\widetilde{\cR}_k:\OmegaF^\bullet(\widehat{S}(B^\vee)) 
\to\OmegaF^{\bullet+1}(\widehat{S}(B^\vee))$
is the $\OmegaF^\bullet$-linear degree $+1$ derivation acting by contraction
induced from a section $\cR_k \in \Omega^1_F (S^k(B^\vee)\otimes B)$;
\item $\cR_2 \in \Omega^1_F(S^2(B^\vee) \otimes B)$ is the Atiyah 1-cocycle
$\At^{\nabla^{\Bott}}_{{\TkM}/F}$
associated with the connection $\nabla^B$ defined by
\[ \cR_2(a,\bar{X}) = \nabla^B_a\nabla^B_X - \nabla^B_X \nabla^B_a - \nabla^B_{[a,X]} ,\]
for all $a\in\Gamma(F)$ and $X\in\Gamma({\TkM})$, where
$\bar{X}\in \sections{B}$ denotes the image of $X$ under the quotient map $\TkM\to\TkM/F$.
\end{enumerate}
A priori, $\cR_2\in\Omega^1_F(B^\vee\otimes\End(B))$,
but the torsion-free assumption guarantees that
it is indeed an element in 
$\Omega^1_F(S^2(B^\vee)\otimes B)$.
Its cohomology class $\alpha_{{\TkM}/F}\in\hypercohomology{1}_{\CE}(F,B^\vee\otimes\End(B))$
is independent of the choice of $\nabla^B$ and is called the Atiyah class of the Lie pair
$(\TkM,F)$ \cite{MR3439229}.
Note that $\OmegaF(\widehat{S}(B^\vee))$ is the algebra of functions
on $F[1]\oplus B$. Thus $(F[1]\oplus B,D)$ is a dg manifold with support $M$,
called a \bfemph{Kapranov dg manifold} associated to the Lie pair $(\TkM,F)$
\cite{MR4271478}.
One can prove that
the various Kapranov dg manifold structures on $F[1]\oplus B$
resulting from all possible choices of splitting and connection are all
isomorphic.

\begin{theorem}[{\cite[Theorem~5.7]{MR4271478}}]\label{thm:HKG}
Let $F \subseteq {\TkM}$ be an integrable distribution.
The choice of a splitting $j\colon B \to {\TkM}$
of the short exact sequence \eqref{SES}
and a torsion-free linear connection $\nabla^B$ of the vector bundle $B$
determines an $L_\infty[1]$ algebra structure on the graded vector
space $\OmegaF^\bullet (B)$
defined by a sequence $(\lambda_k)_{k\in\NN}$ of multibrackets
such that each $\lambda_k$, with $k\geqslant 2$,
is $\OmegaF$-multilinear, and
\begin{itemize}
\item the unary bracket $\lambda_1$ is
the Chevalley--Eilenberg differential $d_F^{\nabla^{\Bott}}$
associated with the Bott connection $\nabla^{\Bott}$ of $F$ on $B$;
\item the binary bracket $\lambda_2$ is the map
\[ \lambda_2: \OmegaF^{j_1}(B)\otimes \OmegaF^{j_2}(B)
\to \OmegaF^{j_1+j_2+1}(B) \]
induced by the Atiyah $1$-cocycle $\cR_2 \in \Omega^1_F(S^2(B^\vee)\otimes B)$;
\item for every $k\geqslant 3$, the $k$-th multibracket $\lambda_k$
is the composition of the wedge product
\[ \OmegaF^{j_1}(B)\otimes\cdots\otimes \OmegaF^{j_k}(B)
\to \OmegaF^{j_1+\cdots+j_k}\big(B^{\otimes k}\big) \]
with the map
\[ \OmegaF^{j_1+\cdots+j_k}\big(B^{\otimes k}\big)\to
\OmegaF^{j_1+\cdots+j_k+1}(B) \]
induced by an element $R_k\in \OmegaF^1\big(S^k({B}\dual)\otimes B\big)
\subset\OmegaF^1 \big({(B\dual)}^{\otimes k}\otimes B \big)\big)$.
\end{itemize}
Moreover, the $L_\infty[1]$ algebra structure
on $\OmegaF^\bullet (B)$ is unique up to isomorphisms
in the sense that those resulting from all possible choices of splitting
and connection are all isomorphic.
\end{theorem}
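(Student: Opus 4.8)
The plan is to run, in the relative setting of the Lie pair $(\TkM, F)$, the very same argument that established Theorem~\ref{cor:main} for a single dg manifold. The whole point is that the data assembled just before the statement produce a dg cocommutative coalgebra $\big(\OmegaF(S(B)), \deltaa, \Delta\big)$ over the dg ring $(\OmegaF, d_F)$ which is cofree cocommutative on the graded $\OmegaF$-module of cogenerators $\OmegaF^\bullet(B)$, since $\OmegaF(S(B))\cong S_{\OmegaF}\big(\OmegaF(B)\big)$. By the coderivation--cogenerator correspondence of Lemma~\ref{coderivation} (applied with $\cR=\OmegaF$ and $V=\OmegaF(B)$, allowing the base differential $d_F$), specifying such a square-zero degree $+1$ coderivation over $(\OmegaF, d_F)$ is precisely the same datum as an $L_\infty[1]$ algebra structure on $\OmegaF^\bullet(B)$ whose higher brackets are $\OmegaF$-multilinear. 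This is exactly the mechanism by which the dg coalgebra $\big(\sections{S(T_\cM)}, \delta^\nabla\big)$ was converted into the Kapranov brackets on $\XX(\cM)$.

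First I would fix a splitting $j$ and a torsion-free connection $\nabla^B$, form $\overline{\pbw}$ as in~\eqref{eq:PBWLiepair}, transport $d_F^{\cU}$ to the coderivation $\deltaa$, and read off the brackets from its homogeneous components. Dually this is the decomposition $D=d_F^{\nabla^{\Bott}}+\sum_{k\geq 2}\widetilde{\cR}_k$ recorded in~\eqref{eq:D}: under Lemma~\ref{coderivation} the non-$\OmegaF$-linear part of $\deltaa$ is the Chevalley--Eilenberg differential $d_F^{\nabla^{\Bott}}$, giving $\lambda_1$, while each $\cR_k\in\OmegaF^1\big(S^k(B^\vee)\otimes B\big)$ is $\OmegaF$-linear and furnishes $\lambda_k$, for $k\geq 2$, as the wedge product followed by contraction with $\cR_k$. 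The claims that $\lambda_1=d_F^{\nabla^{\Bott}}$ and that $\lambda_2$ is induced by the Atiyah $1$-cocycle $\cR_2=\At^{\nabla^{\Bott}}_{\TkM/F}$ are obtained by unwinding the recursion defining $\overline{\pbw}$ on $S^{\leq 2}(B)$, exactly as Lemma~\ref{lem:1-3} and the identity~\eqref{formulaA4} do for $\pbwcommutator$ in the absolute case; torsion-freeness of $\nabla^B$ is what forces $\cR_2$ into the symmetric part $\OmegaF^1\big(S^2(B^\vee)\otimes B\big)$.

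For the uniqueness clause I would imitate the last parts of the proofs of Theorems~\ref{RSPtheorem} and~\ref{cor:main}. Given a second choice $(j',\nabla^{B'})$ with PBW isomorphism $\overline{\pbw}'$, both $\overline{\pbw}$ and $\overline{\pbw}'$ are isomorphisms of $\OmegaF$-coalgebras onto $\OmegaF(\cD(B))$ intertwining $d_F^{\cU}$; hence $\psi:=(\overline{\pbw}')^{-1}\circ\overline{\pbw}$ is an isomorphism of dg coalgebras $\big(\OmegaF(S(B)),\deltaa\big)\xto{\cong}\big(\OmegaF(S(B)),\deltaa'\big)$ over $(\OmegaF, d_F)$. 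Composing $\psi$ with the projection $\pr_1$ onto the cogenerators, as in the definition of the maps $\phi_k$ in the proof of Theorem~\ref{cor:main}, yields an isomorphism of the two $L_\infty[1]$ algebras; its linear term is the identity because $\overline{\pbw}$ and $\overline{\pbw}'$ agree to first order by the defining recursion.

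The main obstacle, as in the absolute case, is careful bookkeeping in the recursion rather than any conceptual difficulty: one must verify that transporting $d_F^{\cU}$ through $\overline{\pbw}$ produces exactly the stated leading terms, and that the resulting coderivation squares to zero, which is inherited from $(d_F^{\cU})^2=0$ through the coalgebra isomorphism $\overline{\pbw}$. Establishing the recursion for the higher $\cR_k$ (the analogue of Theorem~\ref{theorem2}) and pinning down $\cR_2$ as the Bott--Atiyah cocycle is where the Koszul-sign and connection computations concentrate; everything else is formal coalgebra.
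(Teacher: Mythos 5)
Your proposal is correct and follows essentially the same route as the source: the paper itself offers no proof of Theorem~\ref{thm:HKG} (it is quoted from \cite[Theorem~5.7]{arXiv:1408.2903}), but the machinery it assembles beforehand --- the PBW isomorphism $\overline{\pbw}$ for the Lie pair, the transported coderivation $\deltaa$ on $\OmegaF(S(B))$, and the dual decomposition $D=d_F^{\nabla^{\Bott}}+\sum_{k\geq 2}\widetilde{\cR}_k$ --- is exactly the relative analogue of the argument for Theorem~\ref{cor:main} that you propose to imitate, including the cogenerator/coderivation dictionary and the uniqueness argument via $(\overline{\pbw}')^{-1}\circ\overline{\pbw}$. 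The only point worth making explicit is that, as in the proof of Theorem~\ref{RSPtheorem}, it is the \emph{difference} $\deltaa-$(the coderivation induced by $d_F^{\nabla^{\Bott}}$) that is $\OmegaF$-linear and hence governed by Lemma~\ref{coderivation}, and that the linear terms of the two PBW maps agree because $j(b)-j'(b)\in\Gamma(F)$ projects to zero in $\cD(B)$.
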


Any such $L_\infty[1]$ algebra structure 
on $\OmegaF^\bullet (B)$ is called a 
\bfemph{Kapranov $L_\infty[1]$ algebra} of
the integrable distribution $F$.

As a special case, consider a complex manifold $X$.
The subbundle $F = T_X^{0,1} \subset T_\CC X$ is an integrable
distribution, and the normal bundle $B := T_\CC X/T_X^{0,1}$
is naturally identified with $T^{1,0}_X$.
Moreover, the Chevalley--Eilenberg differential associated with
the Bott $F$-connection on $T_X^{1,0}$ becomes the Dolbeault operator
\[ \bar{\partial}\colon \Omega_X^{0,\bullet}(T_X^{1,0}) 
\to \Omega_X^{0,\bullet+1}(T_X^{1,0}) .\]

The following is an immediate consequence of Theorem~\ref{thm:HKG},
which extends Kapranov's construction for Kähler manifolds
\cite[Theorem~2.6]{MR1671737} to all complex manifolds.

\begin{theorem}[{\cite[Theorem~5.24]{MR4271478}}]\label{thm:complex}
For a given complex manifold $X$, any
torsion-free $T\oz_X$-connection $\nabla\oz$ on $T\oz_X$
determines an $L_\infty[1]$ algebra structure on the Dolbeault complex
$\OO^{0,\bullet}(T\oz_X)$ such that
\begin{itemize}
\item the unary bracket $\lambda_1$ is the Dolbeault operator
\[ \overline{\partial}:\Omega^{0,j}(T\oz_X)\to\Omega^{0,j+1}(T\oz_X) ;\]
\item the binary bracket $\lambda_2$ is the map
\[ \lambda_2:\OO^{0,j_1}(T\oz_X)\otimes\OO^{0,j_2}(T\oz_X)
\to \OO^{0, j_1+j_2+1}(T\oz_X) \]
induced by the Dolbeault representative of the Atiyah $1$-cocycle $R_2\in
\Omega\zo\big(S^2{(T\oz_X)}\dual\otimes T\oz_X\big)$;
\item for every $k\geqslant 3$, the $k$-th multibracket $\lambda_k$
is the composition of the wedge product
\[ \OO^{0,j_1}(T\oz_X)\otimes\cdots\otimes\OO^{0,j_k}(T\oz_X)
\to \OO^{0,j_1+\cdots+j_k}\big((T\oz_X)^{\otimes k}\big) \]
with the map
\[ \OO^{0,j_1+\cdots+j_k}\big((T\oz_X)^{\otimes k}\big)
\to\OO^{0,j_1+\cdots+j_k+1}(T\oz_X) \]
induced by an element $R_k$ of
$\Omega\zo\big(S^k\big((T\oz_X)\dual\big)\otimes T\oz_X\big)
\subset\OO\zo\big(({(T\oz_X)}^\vee)^{\otimes k}\otimes T\oz_X\big)$,
which is itself completely determined by 
the Atiyah $1$-cocycle $R_2$, the curvature of $\nabla\oz$, 
and their higher covariant derivatives.
\end{itemize}
Moreover, the $L_\infty[1]$ algebra structure on 
$\Omega^{0,\bullet}(T\oz_X)$ is unique up to isomorphisms.
\end{theorem}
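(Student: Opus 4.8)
The plan is to deduce Theorem~\ref{thm:complex} from the general Lie pair statement Theorem~\ref{thm:HKG} by specializing to the integrable distribution $F=T\zo_X\subseteq T_\CC X$. First I would record that, since $X$ is a complex manifold, the integrability of its complex structure makes $T\zo_X$ an involutive subbundle of $T_\CC X$, so that $(T_\CC X,T\zo_X)$ is a Lie pair over $X$ with $\KK=\CC$. The short exact sequence~\eqref{SES} then reads
\[ 0\to T\zo_X\to T_\CC X\xto{\pr_B} T_\CC X/T\zo_X\to 0, \]
and the canonical decomposition $T_\CC X=T\oz_X\oplus T\zo_X$ supplies a \emph{canonical} splitting $j\colon B=T_\CC X/T\zo_X\xto{\cong}T\oz_X\into T_\CC X$, identifying the normal bundle $B$ with the holomorphic tangent bundle $T\oz_X$.

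Next I would promote the given torsion-free $T\oz_X$-connection $\nabla\oz$ to a connection $\nabla^B$ of the kind required by Theorem~\ref{thm:HKG}. Using the splitting $j$, any $T_\CC X$-connection on $B\cong T\oz_X$ decomposes, according to $T_\CC X=T\oz_X\oplus T\zo_X$, into a $(1,0)$-component and a $(0,1)$-component; I would take the $(0,1)$-component to be the Bott connection $\nabla^{\Bott}$ and the $(1,0)$-component to be $\nabla\oz$. The essential point is that the resulting $\nabla^B$ is torsion-free in the Lie-pair sense, i.e.\ $\nabla^B_X(\pr_B Y)-\nabla^B_Y(\pr_B X)-\pr_B[X,Y]=0$ for all $X,Y\in\sections{T_\CC X}$. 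Splitting $X,Y$ into their $(1,0)$ and $(0,1)$ parts, this reduces to three cases: the $(1,0)$--$(1,0)$ case is exactly the torsion-freeness of $\nabla\oz$; the mixed case is exactly the defining property~\eqref{Eq:Bott} of the Bott connection; and the $(0,1)$--$(0,1)$ case amounts to $\pr_B[X,Y]=0$ for $X,Y\in\sections{T\zo_X}$, which holds precisely because $T\zo_X$ is involutive.

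With $(j,\nabla^B)$ in hand, Theorem~\ref{thm:HKG} produces an $L_\infty[1]$ algebra structure on $\OmegaF^\bullet(B)=\OO^{0,\bullet}(T\oz_X)$, and it remains to match its structural data with the items asserted in Theorem~\ref{thm:complex}. I would identify the Chevalley--Eilenberg differential $d_F^{\nabla^{\Bott}}$ of the Bott connection with the Dolbeault operator $\overline{\partial}$, via the standard identification of the leafwise de Rham complex of $F=T\zo_X$ with $\OO^{0,\bullet}(X)$ and of the Bott connection on $B\cong T\oz_X$ with the canonical $\overline{\partial}$-operator; this gives $\lambda_1=\overline{\partial}$. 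I would then unwind the definition $\cR_2(a,\bar{X})=\nabla^B_a\nabla^B_X-\nabla^B_X\nabla^B_a-\nabla^B_{[a,X]}$ for $a\in\sections{T\zo_X}$, using $\nabla^B_a=\nabla^{\Bott}_a$, to see that $\cR_2$ equals the Dolbeault representative $R_2\in\Omega\zo\big(S^2(T\oz_X)\dual\otimes T\oz_X\big)$ of the Atiyah $1$-cocycle of $T_X$, so that $\lambda_2$ is the stated map.

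Finally, the higher multibrackets $\lambda_k$ with $k\geq 3$ are governed by the sections $R_k$, which by the recursion underlying Theorem~\ref{thm:HKG} (equivalently, Theorem~\ref{theorem2}) are determined by $\cR_2$, the curvature of $\nabla\oz$, and their iterated covariant derivatives; this is exactly the claimed description of $R_k$ as an algebraic function of $R_2$, the curvature of $\nabla\oz$, their higher covariant derivatives, and compositions thereof. Uniqueness up to isomorphism is inherited verbatim from the uniqueness clause of Theorem~\ref{thm:HKG}, since two torsion-free $T\oz_X$-connections assemble, with the fixed canonical splitting $j$ and the same Bott connection, into two torsion-free Lie-pair connections. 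I expect the principal obstacle to be the torsion-freeness verification of the second step: checking that the split connection $\nabla^B$ --- whose $(0,1)$-part is forced to be the Bott connection and whose $(1,0)$-part is the free choice $\nabla\oz$ --- satisfies the Lie-pair torsion condition is the one place where the complex-geometric input (integrability of $T\zo_X$ and compatibility of the splitting) genuinely enters, rather than the purely formal Lie-pair algebra.
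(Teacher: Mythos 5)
Your proposal is correct and follows essentially the same route as the paper, which obtains Theorem~\ref{thm:complex} as an immediate specialization of Theorem~\ref{thm:HKG} to the Lie pair $(T_\CC X, T^{0,1}_X)$ with the canonical splitting $B\cong T^{1,0}_X$ and the identification of $d_F^{\nabla^{\Bott}}$ with $\overline{\partial}$. The only difference is that you explicitly verify the Lie-pair torsion-freeness of the assembled connection $\nabla^B$ (Bott part plus $\nabla^{1,0}$ part), a check the paper leaves implicit by citing~\cite{arXiv:1408.2903}.
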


Now we are ready to consider the Kapranov $L_\infty[1]$ algebra of the dg manifold
$(F[1],d_F)$. Let 
\[ \tilde{\Phi} \colon \cD({F[1]}) \to \OmegaF(\cD(B)) \]
be the map defined by 
$\tilde{\Phi}(D)=\overline{\pi_\ast (D)}$,
where $\pi_\ast\colon \cD({F[1]}) \to \OmegaF \otimes_R \cD(M)$ is the pushforward map
\[ \pi_\ast(D)(f) = D(\pi^\ast f),
\qquad\forall D\in\cD({F[1]}), \forall f\in R \]
and $\overline{\pi_\ast (D)} \in \Omega_F(\cD(B))$ denotes the class
of $\pi_\ast (D)$ in $\OmegaF \otimes_R \frac{\cD(M)}{\cD(M)\Gamma(F)} \cong
\OmegaF(\cD(B))$.

\begin{theorem}[\cite{MR3313214,arXiv:2103.08096}]
There exists a contraction of dg $\Omega_F$-modules
\begin{equation}\label{Contration:DFonetoOmegaFDpolyB}
\begin{tikzcd}
(\cD({F[1]}), \UdifferentialM) \arrow[loop left, distance=2em, 
start anchor={[yshift=-1ex]west}, end anchor={[yshift=1ex]west}]{}{\tilde{H}}
\arrow[r, yshift=0.7ex, "\tilde{\Phi}"] & (\Omega_F(\cD(B)),d_F^{\cU})
\arrow[l, yshift=-0.7ex, "\tilde{\Psi}"],
\end{tikzcd}
\end{equation}
where the projection $\tilde{\Phi}$ is a morphism of $\Omega_F$-coalgebras.
\end{theorem}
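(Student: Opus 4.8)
The plan is to exhibit explicitly the remaining pieces of the contraction, namely the corestriction $\tilde\Psi\colon\OmegaF(\cD(B))\to\cD(F[1])$ and the homotopy $\tilde H\colon\cD(F[1])\to\cD(F[1])$, and then to verify the defining relations of a contraction of complexes,
\[ \tilde\Phi\circ\tilde\Psi=\id, \qquad \id-\tilde\Psi\circ\tilde\Phi=\UdifferentialM\circ\tilde H+\tilde H\circ\UdifferentialM, \qquad \tilde H\circ\tilde H=\tilde\Phi\circ\tilde H=\tilde H\circ\tilde\Psi=0, \]
together with the facts that $\tilde\Phi$ and $\tilde\Psi$ are morphisms of dg $\OmegaF$-modules (so that $\tilde\Phi\circ\UdifferentialM=d_F^{\cU}\circ\tilde\Phi$ and $\tilde\Psi\circ d_F^{\cU}=\UdifferentialM\circ\tilde\Psi$) and that $\tilde\Phi$ intertwines the comultiplications. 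Since the statement repackages the constructions of~\cite{MR3313214,MR3877426}, the task is to transport their maps into the present notation and to isolate the coalgebra compatibility, which is the one feature we shall actually use in the sequel.

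First I would set up the decomposition of $\cD(F[1])$ into leafwise (fiber) and transversal directions supplied by the splitting $j\colon B\to\TkM$ of~\eqref{SES} and the torsion-free connection $\nabla^B$. These data already produce the PBW isomorphism $\overline{\pbw}\colon\OmegaF(S(B))\to\OmegaF(\cD(B))$ of~\eqref{eq:PBWLiepair}, and I would use the same splitting to lift a transversal operator to a genuine operator on $F[1]$: a class $\overline u\in\cD(B)$ is represented by a differential operator on $M$ built from the vector fields $j(b)$, $b\in\Gamma(B)$, and such an operator acts on $\smooth{F[1]}=\OmegaF$ through $\nabla^B$. Extending $\OmegaF$-linearly and composing with $\overline{\pbw}$ yields $\tilde\Psi$. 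The identity $\tilde\Phi\circ\tilde\Psi=\id$ then reduces to the observation that $\pi_\ast$ followed by the quotient $\cD(M)\to\cD(B)$ undoes this lift, which is immediate on generators.

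The heart of the argument is the homotopy $\tilde H$. I would build it from the fiberwise contraction of the complex along the odd fiber directions of $F[1]$ and then correct it by the homological perturbation lemma to absorb the part of $\UdifferentialM=\llbracket Q,\argument\rrbracket$ coming from the anchor of $F$. Concretely, I expect $\UdifferentialM$ to split, relative to the order filtration of $\cD(F[1])$, into a vertical piece whose cohomology is carried exactly by the transversal operators $\Omega_F(\cD(B))$ together with a horizontal perturbation; iterating the perturbation series over the filtration then gives a well-defined $\OmegaF$-linear operator $\tilde H$ with the side conditions built in. This is where I expect the main obstacle to lie: one must check that the perturbed $\tilde H$ stays $\OmegaF$-linear and that the perturbation series terminates on each filtration degree, and only then is the homotopy relation $\id-\tilde\Psi\circ\tilde\Phi=\UdifferentialM\circ\tilde H+\tilde H\circ\UdifferentialM$ available; after transport through $\overline{\pbw}$ both reduce to identities already established in~\cite{MR3313214,MR3877426}.

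Finally I would establish the coalgebra compatibility of $\tilde\Phi$. The comultiplication on $\cD(F[1])$ is $(\Delta D)(f\otimes g)=D(f\cdot g)$; since $\pi^{\ast}\colon R\to\OmegaF$ is an algebra map, this pushes forward to the standard coproduct on $\OmegaF\otimes_R\cD(M)$, and because $\cD(M)\Gamma(F)\subseteq\cD(M)$ is a coideal, as recorded before~\eqref{Eq:COPRODUCTonDB}, the coproduct descends to $\cD(B)$ and is respected by the quotient map, with the induced $F$-action given by~\eqref{eq:Faction}. Hence $\tilde\Phi(D)=\overline{\pi_\ast(D)}$ is a morphism of $\OmegaF$-coalgebras, which completes the verification.
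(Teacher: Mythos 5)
The paper does not actually prove this statement: it is imported verbatim from \cite{MR3313214,MR3877426}, and the only ingredients the paper itself supplies are the definition of $\tilde\Phi(D)=\overline{\pi_\ast(D)}$ and the coproduct on $\cD(B)$ obtained from the coideal property of $\cD(M)\Gamma(F)$. So your proposal has to be measured against the construction in those references rather than against anything in this text. Your overall strategy --- exhibit $\tilde\Psi$ and $\tilde H$, verify the contraction identities and side conditions, and deduce the coalgebra compatibility of $\tilde\Phi$ from the facts that $\pi^{\ast}$ is an algebra morphism and that $\cD(M)\Gamma(F)$ is a coideal --- is the standard one and is consistent with how such contractions are built; your last paragraph on the coalgebra compatibility is correct and essentially complete as stated.

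The concrete gap is in your construction of $\tilde\Psi$. You propose to lift a transversal operator $\overline{j(b_1)\circ\cdots\circ j(b_n)}\in\cD(B)$ to a differential operator on $F[1]$ by letting the vector fields $j(b_i)\in\Gamma(\TkM)$ ``act on $\smooth{F[1]}=\OmegaF$ through $\nabla^B$.'' This does not typecheck: $\nabla^B$ is a $\TkM$-connection on the quotient bundle $B=\TkM/F$, whereas a degree-zero derivation of $\OmegaF=\Gamma(\wedge F^\vee)$ extending $j(b)$ requires the ability to differentiate sections of $F$ (equivalently of $F^\vee$) along $j(b)$, i.e.\ a $\TkM$-connection on $F$ --- an additional choice not contained in the data $(j,\nabla^B)$ fixed in the paper. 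Either you introduce such a connection explicitly (and then must address independence of that choice), or you abandon the a priori formula for $\tilde\Psi$ and let the homological perturbation lemma produce $\tilde\Psi$ together with $\tilde H$; but in the latter case the identity $\tilde\Phi\circ\tilde\Psi=\id$ is no longer ``immediate on generators'' --- it is an output of the lemma, available only after one verifies that the perturbation of the vertical (Koszul-type) differential is locally nilpotent with respect to the order filtration and that the perturbed maps remain $\OmegaF$-linear. That convergence and well-definedness check, which you flag as the main obstacle and then defer entirely to \cite{MR3313214,MR3877426}, is precisely the content a self-contained proof would have to supply.
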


Choose a torsion-free affine connection $\nabla$ on $F[1]$. We write
\[ \pbwF:\sections{S(\tangent{F[1]})}\to\cD(F[1]) \]
for the corresponding Poincar\'e--Birkhoff--Witt map as in~\eqref{defnpbw}.

By conjugating the PBW maps $\pbwF$ and $\overline{\pbw}$, respectively,
on the left hand side and on the right hand side of~\eqref{Contration:DFonetoOmegaFDpolyB},
we obtain

\begin{corollary}
There exists a contraction of dg $\Omega_F$-modules
\[ \begin{tikzcd}
(\sections{S(T_{F[1]})} , \pbwF^{-1} \circ \UdifferentialM \circ \pbwF)
\arrow[loop left, distance=2em, start anchor={[yshift=-1ex]west},
end anchor={[yshift=1ex]west}]{}{{H}}
\arrow[r, yshift = 0.7ex, "{\Phi}"]
& (\Omega_F(S(B)),{\overline{\pbw}}^{-1}\circ d_F^{\cU}\circ\overline{\pbw})
\arrow[l, yshift = -0.7ex, "\Psi"],
\end{tikzcd} \]
where the projection ${\Phi}:={\overline{\pbw}}^{-1}\circ\tilde{\Phi}\circ\pbwF$
is a morphism of $\Omega_F$-coalgebras.
\end{corollary}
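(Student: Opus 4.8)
The plan is to transport the contraction of the preceding theorem along the two Poincaré--Birkhoff--Witt isomorphisms; once it is recorded that both PBW maps are isomorphisms of dg $\OmegaF$-coalgebras, the corollary becomes a purely formal manipulation. First I would note that $\pbwF\colon\sections{S(\tangent{F[1]})}\to\cD(F[1])$ is an isomorphism of graded $\OmegaF$-coalgebras by Theorem~\ref{thm:pbw} (applied to $\cM=F[1]$, so that $\smooth{\cM}=\OmegaF$), while $\overline{\pbw}\colon\OmegaF(S(B))\to\OmegaF(\cD(B))$ is an isomorphism of graded $\OmegaF$-coalgebras, as recalled around~\eqref{eq:PBWLiepair}. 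By the very definition of the conjugated differentials $\pbwF^{-1}\circ\UdifferentialM\circ\pbwF$ and $\overline{\pbw}^{-1}\circ d_F^\cU\circ\overline{\pbw}=\deltaa$, both $\pbwF$ and $\overline{\pbw}$ commute with the differentials on the two sides, and hence are isomorphisms of dg $\OmegaF$-coalgebras.

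Next, writing $g=\pbwF$ and $h=\overline{\pbw}$ for brevity, I would set
\[ \Phi:=h^{-1}\circ\tilde{\Phi}\circ g,\qquad \Psi:=g^{-1}\circ\tilde{\Psi}\circ h,\qquad H:=g^{-1}\circ\tilde{H}\circ g, \]
and check that this data forms a contraction. Each defining relation reduces to the corresponding relation for $(\tilde{\Phi},\tilde{\Psi},\tilde{H})$ supplied by the preceding theorem, simply by inserting $g^{-1}g=\id$ and $h^{-1}h=\id$: one has $\Phi\circ\Psi=h^{-1}(\tilde{\Phi}\circ\tilde{\Psi})h=\id$; writing $d$ for $\UdifferentialM$ and $d'=g^{-1}dg$ for its conjugate, $\Psi\circ\Phi-\id=g^{-1}(\tilde{\Psi}\circ\tilde{\Phi}-\id)g=g^{-1}(d\tilde{H}+\tilde{H}d)g=d'H+Hd'$; and the side conditions $H\circ\Psi=0$, $\Phi\circ H=0$, $H\circ H=0$ follow at once from $\tilde{H}\tilde{\Psi}=0$, $\tilde{\Phi}\tilde{H}=0$, $\tilde{H}^2=0$. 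The same insertion shows that $\Phi$ and $\Psi$ intertwine the conjugated differentials, so they are morphisms of dg $\OmegaF$-modules. Finally, $\Phi=h^{-1}\circ\tilde{\Phi}\circ g$ is a composite of three morphisms of $\OmegaF$-coalgebras, hence is itself a morphism of $\OmegaF$-coalgebras, which is the last assertion of the corollary.

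There is essentially no obstacle here: all the mathematical content sits in the preceding theorem, and the corollary is merely a transport of structure along isomorphisms. The only point demanding a line of care is the verification that $\overline{\pbw}$ genuinely intertwines $\deltaa$ with $d_F^\cU$ in the precise sense needed, but this is immediate from the definition of $\deltaa$ recorded just after~\eqref{eq:PBWLiepair}.
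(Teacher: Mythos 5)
Your proposal is correct and is exactly the argument the paper intends: the paper derives the corollary by conjugating the contraction of the preceding theorem with the two PBW isomorphisms, which is precisely your transport-of-structure computation, and your final observation that $\Phi$ is a composite of $\Omega_F$-coalgebra morphisms matches the paper's justification of the last claim. No gaps.
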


The projection $\Phi$ determines a sequence of maps
$\{\phi_k\}_{k\geq 1}$ making the diagrams
\begin{equation}\label{diagram:phik}
\begin{tikzcd}
S^k_\KK\big(\XX(F[1])\big) \arrow[d] \arrow[r, "\phi_{k}"] & \Omega_F (B)	\\
\sections{S(T_{F[1]})} \arrow[r, "\Phi"] & \Omega_F\big(S(B)\big) \arrow[u]
\end{tikzcd}
\end{equation}
commutative.
Note that $\phi_1:\XX( F[1])\to\Omega_F (B)$ is the composition
\[ \XX(F[1])\xto{\pi_*}\Omega_F(\TkM)\xto{\prB}\Omega_F(B) .\]

\begin{theorem}
Let $F \subseteq {\TkM}$ be an integrable distribution.
Then the sequence of $\Omega_F$-multilinear
maps $\{\phi_k\}_{k\geq 1}$
defined by the commutative diagrams \eqref{diagram:phik}
constitutes a quasi-isomorphism from the 
Kapranov $L_\infty[1]$ algebra $\XX(F[1])$ 
arising from the dg manifold $(F[1],d_F)$
to the Kapranov $L_\infty[1]$ algebra 
$\Omega_F^\bullet(B)$ arising (as in
Theorem~\ref{thm:HKG}) from the integrable distribution $F$.
\end{theorem}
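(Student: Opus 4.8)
The plan is to recognise that both $L_\infty[1]$ algebras are merely the transcriptions, under the coalgebra--$L_\infty[1]$ dictionary of Lemma~\ref{coderivation}, of two dg cocommutative coalgebra structures over the dg ring $(\OmegaF,d_F)\cong(C^\infty(F[1]),Q)$, and that the desired quasi-isomorphism is precisely the Taylor expansion of the coalgebra morphism $\Phi$ furnished by the preceding Corollary. First I would record that the Kapranov $L_\infty[1]$ algebra of $(F[1],d_F)$ of Theorem~\ref{cor:main} is encoded by $\big(\sections{S(T_{F[1]})},\,\pbwF^{-1}\circ\UdifferentialM\circ\pbwF\big)$, while the $L_\infty[1]$ algebra $\OmegaF^\bullet(B)$ of Theorem~\ref{thm:HKG} is encoded by $\big(\OmegaF(S(B)),\deltaa\big)$. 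Under the identifications $\sections{S(T_{F[1]})}\cong S_{\OmegaF}(\XX(F[1]))$ and $\OmegaF(S(B))\cong S_{\OmegaF}(\OmegaF(B))$ these are cofree cocommutative coalgebras over $(\OmegaF,d_F)$, and in each case the codifferential splits, exactly as in the proof of Theorem~\ref{RSPtheorem}, into the Lie derivative along $d_F$ (recording the unary brackets $\liederivative{Q}$ and $d_F^{\nabla^{\Bott}}$) plus an $\OmegaF$-linear coderivation (recording the higher, $\OmegaF$-multilinear brackets). Since both structures are independent of the auxiliary choices up to isomorphism, it suffices to work with the connection on $F[1]$, the splitting $j$, and the connection $\nabla^B$ fixed in the Corollary.

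Next I would invoke the Corollary: $\Phi=\overline{\pbw}^{-1}\circ\tilde{\Phi}\circ\pbwF$ is a morphism of dg $\OmegaF$-coalgebras between these two coalgebras. By the morphism analogue of Lemma~\ref{coderivation}, a counital morphism of cofree cocommutative dg coalgebras over $(\OmegaF,d_F)$ is the same datum as an $L_\infty[1]$ morphism over $\KK$, whose Taylor coefficients are $\pr_1\circ\Phi|_{\sections{S^k(T_{F[1]})}}$; precomposing with the canonical surjection $S^k_\KK(\XX(F[1]))\twoheadrightarrow\sections{S^k(T_{F[1]})}$ these are exactly the maps $\phi_k$ of diagram~\eqref{diagram:phik}. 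Because $\Phi$ is $\OmegaF$-linear, each $\phi_k$ factors through $\sections{S^k(T_{F[1]})}=S^k_{\OmegaF}(\XX(F[1]))$ and is therefore $\OmegaF$-multilinear, as claimed. Consequently $\{\phi_k\}_{k\geq 1}$ is automatically a morphism of $L_\infty[1]$ algebras, and the only thing left to establish is that it is a quasi-isomorphism, i.e.\ that its linear part $\phi_1$ is a quasi-isomorphism between the tangent complexes $\big(\XX(F[1]),\liederivative{Q}\big)$ and $\big(\OmegaF^\bullet(B),d_F^{\nabla^{\Bott}}\big)$.

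For this I would use the order filtrations on $\cD(F[1])$ and on $\OmegaF(\cD(B))$, which the PBW maps turn into the symmetric-degree filtrations $\sections{S^{\leq\bullet}(T_{F[1]})}$ and $\OmegaF(S^{\leq\bullet}(B))$. All three structure maps of the contraction respect these filtrations, so the contraction descends to the associated graded complexes and $\operatorname{gr}\Phi$ is in particular a quasi-isomorphism. As $\Phi$ is a coalgebra morphism, its leading term on the $k$-th graded piece $S^k$ is $S^k(\phi_1)$, whence $\operatorname{gr}\Phi=\bigoplus_{k\geq 0}S^k(\phi_1)$ as a direct sum of complexes (the higher brackets, which strictly lower symmetric degree, drop out on $\operatorname{gr}$, so each graded differential is the derivation extension of the unary bracket). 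A direct sum of chain maps is a quasi-isomorphism if and only if every summand is, so the degree-one summand $\phi_1$ is a quasi-isomorphism. This proves that $\{\phi_k\}$ is an $L_\infty[1]$ quasi-isomorphism; specialising to $F=T_X^{0,1}$ and $B=T_X^{1,0}$ yields the complex-manifold statement of Corollary~\ref{cor:NYC}.

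The main obstacle is the passage from the homotopy equivalence of the \emph{large} symmetric coalgebras, which is all the contraction directly provides, to a quasi-isomorphism of the \emph{linear parts}. The whole argument hinges on the compatibility of the contraction --- and in particular of its homotopy operator $\tilde H$ --- with the order filtration on differential operators, a property that has to be extracted from the constructions of \cite{MR3313214,MR3877426}; granting it, cocommutativity forces $\operatorname{gr}\Phi$ to split as $\bigoplus_k S^k(\phi_1)$ and thereby isolates $\phi_1$ as a direct summand. I would therefore spend the most care confirming this filtered compatibility and checking that the unary brackets on the two sides are genuinely $\liederivative{Q}$ and $d_F^{\nabla^{\Bott}}$, so that $\phi_1$ is indeed a chain map between the intended complexes.
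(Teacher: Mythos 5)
The paper states this theorem without giving a proof at all: it is presented as an immediate consequence of the preceding contraction (conjugated by the two PBW maps) together with the fact that the projection $\Phi$ is a morphism of dg $\OmegaF$-coalgebras. Your proposal reconstructs exactly that intended argument --- reading both $L_\infty[1]$ structures off the dg coalgebras $\big(\sections{S(T_{F[1]})},\pbwF^{-1}\circ\UdifferentialM\circ\pbwF\big)$ and $\big(\OmegaF(S(B)),\deltaa\big)$, taking the Taylor coefficients of $\Phi$ as the $L_\infty[1]$ morphism $\{\phi_k\}$ --- and then supplies the one step the paper leaves implicit, namely passing from the homotopy equivalence of the full coalgebras to the quasi-isomorphism of the linear parts via the symmetric-degree filtration and the splitting $\operatorname{gr}\Phi=\bigoplus_k S^k(\phi_1)$. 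This is correct and is the same route the paper takes; your flagged caveat (compatibility of the homotopy $\tilde H$ with the order filtration) is the right thing to verify against \cite{MR3313214,MR3877426}, and it does hold there since the contraction is built order by order on differential operators.
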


As an immediate consequence, we have
\begin{corollary}\label{cor:NYC}
For any complex manifold $X$, consider its corresponding
dg manifold $(T^{0,1}_X[1], \bar{\partial})$ as in Example~\ref{example-two}.
The Kapranov $L_\infty[1]$ algebra
$\XX(T_X^{0, 1}[1])$ is quasi-isomorphic to
the $L_\infty[1]$ algebra
$\OO^{0,\bullet}(T\oz_X)$---see~Theorem~\ref{thm:complex}.
The quasi-isomorphism $\{\phi_k\}_{k\geq 1}$, 
in which each map $\phi_k$ is $\OO^{0,\bullet}_X$-multilinear, is given by
\eqref{diagram:phik} (with $F=T_X^{0,1}$ and $B=T\oz_X$),
and in particular, the linear part
$\phi_1: \XX(T_X^{0, 1}[1])\to \OO^{0, \bullet}(T\oz_X)$
is given by the composition
\[ \XX(T_X^{0, 1}[1])\xto{\pi_*}\Omega^{0,\bullet}(T_X^\CC)\xto{\pr}\OO^{0,\bullet}(T\oz_X) .\]
\end{corollary}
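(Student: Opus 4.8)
The plan is to obtain Corollary~\ref{cor:NYC} as a direct specialization of the preceding theorem to the complex integrable distribution $F=T^{0,1}_X\subset T_\CC X$. First I would record the relevant identifications. For a complex manifold $X$, the subbundle $T\zo_X$ is an integrable distribution of $T_\CC X=\TkM$ (with $\KK=\CC$), and its normal bundle $B=T_\CC X/T\zo_X$ is canonically isomorphic to $T\oz_X$. Under these identifications the dg manifold $(F[1],d_F)$ is exactly the dg manifold $(T\zo_X[1],\overline{\partial})$ of Example~\ref{example-two}, the function algebra $\OmegaF=\Gamma(\wedge F^\vee)$ is the Dolbeault algebra $\OO^{0,\bullet}(X)$, and, as already observed in the paragraph preceding Theorem~\ref{thm:complex}, the Chevalley--Eilenberg differential associated with the Bott $F$-connection on $B\cong T\oz_X$ coincides with the Dolbeault operator $\overline{\partial}$.

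With these identifications in hand, the $L_\infty[1]$ algebra $\OmegaF^\bullet(B)$ produced by Theorem~\ref{thm:HKG} becomes the Dolbeault $L_\infty[1]$ algebra on $\OO^{0,\bullet}(T\oz_X)$ of Theorem~\ref{thm:complex}; indeed, as noted in the text, the latter is precisely the specialization of the former to $F=T\zo_X$. The preceding theorem then applies verbatim: its $\OmegaF$-multilinear maps $\{\phi_k\}_{k\geq 1}$, defined by Diagram~\eqref{diagram:phik}, form a quasi-isomorphism from the Kapranov $L_\infty[1]$ algebra $\XX(T\zo_X[1])$ to the $L_\infty[1]$ algebra $\OO^{0,\bullet}(T\oz_X)$. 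Since $\OmegaF=\OO^{0,\bullet}_X$, the asserted $\OO^{0,\bullet}_X$-multilinearity of each $\phi_k$ is immediate.

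Finally, I would read off the linear term. By the description of $\phi_1$ recorded just after Diagram~\eqref{diagram:phik}, the map $\phi_1$ is the composition of the pushforward $\pi_*\colon\XX(F[1])\to\OmegaF(\TkM)$ with the projection $\pr\colon\OmegaF(\TkM)\to\OmegaF(B)$. Specializing $\TkM=T_\CC X$ and $B=T\oz_X$ yields exactly the asserted composition $\XX(T\zo_X[1])\xto{\pi_*}\Omega^{0,\bullet}(T_\CC X)\xto{\pr}\OO^{0,\bullet}(T\oz_X)$, completing the identification.

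There is essentially no obstacle beyond bookkeeping, since the entire analytic content already resides in the preceding theorem and in Theorem~\ref{thm:HKG}. The only point requiring a moment of care is the compatibility of the normal-bundle identification $B\cong T\oz_X$ with the dg structures, so that the Bott Chevalley--Eilenberg differential is genuinely $\overline{\partial}$ and the resulting $\OmegaF^\bullet(B)$ is literally the $L_\infty[1]$ algebra of Theorem~\ref{thm:complex} rather than merely isomorphic to it; but this is precisely the identification already fixed in the discussion surrounding Theorem~\ref{thm:complex}.
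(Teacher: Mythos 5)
Your proposal is correct and follows exactly the route the paper intends: the corollary is stated as an immediate consequence of the preceding theorem, obtained by specializing to $F=T^{0,1}_X$, $B\cong T^{1,0}_X$, with the identifications (Dolbeault algebra, Bott connection giving $\overline{\partial}$) already fixed in the discussion surrounding Theorem~\ref{thm:complex}. Nothing further is needed.
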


\appendix

\section{Fedosov construction on graded manifolds}
\label{sec:A}
This section is to give a brief description of
Fedosov construction of graded manifolds. We refer readers to
\cite{MR2102846,MR1327535,MR3910470} for more details.

Throughout this section, $\cM$ is a finite dimensional graded manifold
and $\nabla$ is a torsion-free affine connection on $\cM$.
There is an induced linear connection on $\widehat{S}(\tangent{\cM}^{\vee})$,
which is denoted by the same symbol
$\nabla$ by abuse of notation.

Consider the map $\nabla^{\lightning}:\XX(\cM)\times\sections{S(\tangent{\cM})}
\to\sections{S(\tangent{\cM})}$
\[\nabla^{\lightning}_{Y}\boldX = (\pbw^{\nabla})^{-1}
\big( Y\cdot \pbw^{\nabla}(\boldX)\big) \]
for any $Y\in\XX(\cM)$ and $\boldX\in\sections{S(\tangent{\cM})}$.

\begin{lemma}
The above map $\nabla^{\lightning}$ defines
a flat connection on $S(\tangent{\cM})$.
\end{lemma}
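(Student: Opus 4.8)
The plan is to verify directly that $\nabla^{\lightning}$ satisfies the two connection axioms and then that its curvature vanishes, exploiting throughout that $\pbw^{\nabla}$ is an isomorphism of coalgebras over $\cR=\smooth{\cM}$ (Theorem~\ref{thm:pbw}), hence in particular $\cR$-linear, and that the composition product in $\cD(\cM)$ is associative.

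First I would check $\cR$-linearity in the first slot. Viewing $fY\in\XX(\cM)$ as an operator in $\cD(\cM)$, it equals the product $f\cdot Y$, so by associativity $(fY)\cdot\pbw^{\nabla}(\boldX)=f\cdot\big(Y\cdot\pbw^{\nabla}(\boldX)\big)$; applying the $\cR$-linear map $(\pbw^{\nabla})^{-1}$ gives $\nabla^{\lightning}_{fY}\boldX=f\,\nabla^{\lightning}_{Y}\boldX$. For the Leibniz rule in the second slot, I would use that $\pbw^{\nabla}(f\boldX)=f\cdot\pbw^{\nabla}(\boldX)$ together with the identity $Y\cdot f=Y(f)+(-1)^{\degree{f}\cdot\degree{Y}}f\cdot Y$ in $\cD(\cM)$ (the graded commutator of the first-order operator $Y$ with the zeroth-order operator $f$ being $Y(f)$). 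Associativity then yields $Y\cdot(f\cdot D)=Y(f)\cdot D+(-1)^{\degree{f}\cdot\degree{Y}}f\cdot(Y\cdot D)$, and applying $(\pbw^{\nabla})^{-1}$ produces $\nabla^{\lightning}_{Y}(f\boldX)=Y(f)\,\boldX+(-1)^{\degree{f}\cdot\degree{Y}}f\,\nabla^{\lightning}_{Y}\boldX$, which is exactly the required Leibniz identity. Degree $0$ of the connection is immediate, since $\pbw^{\nabla}$ preserves degree and the product in $\cD(\cM)$ is additive in degrees.

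The heart of the matter is flatness, and the key observation is that $\nabla^{\lightning}$ is merely the transport through $\pbw^{\nabla}$ of the left-multiplication action of $\cD(\cM)$ on itself, restricted to vector fields. Concretely, because $\pbw^{\nabla}\circ(\pbw^{\nabla})^{-1}=\id$, iterating the definition collapses the intermediate $\pbw^{\nabla}$'s and gives $\nabla^{\lightning}_{Y}\nabla^{\lightning}_{Z}\boldX=(\pbw^{\nabla})^{-1}\big(Y\cdot Z\cdot\pbw^{\nabla}(\boldX)\big)$, where $Y\cdot Z$ is the composition in $\cD(\cM)$. Recalling that the Lie bracket of vector fields coincides with the operator commutator, $\lie{Y}{Z}=Y\cdot Z-(-1)^{\degree{Y}\cdot\degree{Z}}Z\cdot Y$, I would then compute the curvature (in the sign convention of the paper) as
\[
R^{\nabla^{\lightning}}(Y,Z)\boldX
=(\pbw^{\nabla})^{-1}\Big(\big(Y\cdot Z-(-1)^{\degree{Y}\cdot\degree{Z}}Z\cdot Y-\lie{Y}{Z}\big)\cdot\pbw^{\nabla}(\boldX)\Big)=0,
\]
so $\nabla^{\lightning}$ is flat.

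I do not anticipate any serious obstacle here: the entire content is that associativity of composition in $\cD(\cM)$ forces the curvature to telescope to zero, while the connection axioms are formal consequences of $\pbw^{\nabla}$ being an $\cR$-linear coalgebra isomorphism. The only point requiring genuine care is the bookkeeping of Koszul signs in the graded Leibniz rule for the second axiom; everything else is automatic from the associative-algebra structure on $\cD(\cM)$.
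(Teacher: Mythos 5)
Your proof is correct, and since the paper states this lemma without proof, your argument is precisely the intended one: $\nabla^{\lightning}$ is the transport through the $\cR$-linear isomorphism $\pbw^{\nabla}$ of left multiplication by vector fields in $\cD(\cM)$, so the connection axioms follow from the graded commutator identity $Y\cdot f=Y(f)+(-1)^{\degree{f}\cdot\degree{Y}}f\cdot Y$ and flatness from associativity of composition together with $[Y,Z]=Y\cdot Z-(-1)^{\degree{Y}\cdot\degree{Z}}Z\cdot Y$. The only cosmetic remark is that $\nabla^{\lightning}_{Y}$ does not preserve the summands $\sections{S^{n}(\tangent{\cM})}$ (it maps $\sections{S^{\leq n}(\tangent{\cM})}$ into $\sections{S^{\leq n+1}(\tangent{\cM})}$), which is consistent with the lemma's claim of a connection on the whole bundle $S(\tangent{\cM})$ and causes no difficulty.
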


Abusing notation, we write the same symbol $\brevee{\nabla}^{\lightning}$
to denote the induced flat connection on $\widehat{S}(\tangent{\cM}^{\vee})$.
Then the associated covariant derivative $d^{\brevee{\nabla}^{\lightning}}$ satisfies
$(d^{\brevee{\nabla}^{\lightning}})^{2}=0$.

In the following, we use the identification
\[\Omega^{p}(\widehat{S}(\tangent{\cM}^{\vee})) \cong
\sections{\Lambda^{p}(\tangent{\cM}^{\vee})\otimes \widehat{S}(\tangent{\cM}^{\vee})} \cong
\sections{\Hom(\Lambda^{p}(\tangent{\cM})\otimes S(\tangent{\cM}),\KK)}\]
and the total degree of $\omega \in \Omega^{p}(\widehat{S}(\tangent{\cM}^{\vee}))$
is $ p+\degree{\omega}$,
where $p$ is the cohomological degree and $\degree{\omega}$ is the internal degree.

Define two operators
\[ \delta:\Omega^p(\widehat{S}(T\dual_\cM))
\to\Omega^{p+1}(\widehat{S}(T\dual_\cM)) \]
and \[ \frakh:\Omega^p(\widehat{S}(T\dual_\cM))
\to\Omega^{p-1}(\widehat{S}(T\dual_\cM)) \] by
\[ \left(\delta\omega\right)(X_1\wedge\cdots\wedge X_{p+1};
Y_1\odot\cdots\odot Y_{q-1})=\sum_{i=1}^{p+1} (-1)^{i+1} \sign \cdot
\omega(X_1\wedge\cdots\wedge\widehat{X}_i\wedge\cdots\wedge X_{p+1};
X_i\odot Y_1\odot\cdots\odot Y_{q-1}) \]
and
\[ \left(\frakh\omega\right)(X_1\wedge\cdots\wedge X_{p-1};
Y_1\odot\cdots\odot Y_{q+1})=\frac{1}{p+q}\sum_{j=1}^{q+1}
\sign \cdot \omega(Y_j\wedge X_1\wedge\cdots\wedge X_{p-1};
Y_1\odot\cdots\odot\widehat{Y}_j\odot\cdots\odot Y_{q+1}) ,\]
for all $\omega\in\Omega^{p}(\widehat{S}(\tangent{\cM}^{\vee}))$
and all homogeneous $X_1,\cdots,X_{p+1},Y_1,\cdots,Y_{q+1}\in\XX(\cM)$.
The symbol $\sign$ denotes the Koszul signs:
either $\sign(X_1,\cdots,X_{p+1},Y_1,\cdots,Y_{q-1})$
or $\sign(X_1,\cdots,X_{p-1},Y_1,\cdots,Y_{q+1})$, as appropriate.

Both $\delta$ and $\frakh$ are $\smooth{\cM}$-linear,
and $\delta$ is the Koszul operator.
Observe that $\delta$ has total degree $+1$ and $\frakh$ has total degree $-1$.
However neither $\delta$ nor $\omega$ change the internal degree:
$\degree{\delta\omega}=\degree {\omega}$ and $\degree{\frakh\omega}=\degree{\omega}$
for $\omega\in\Omega^{p}(\widehat{S}(\tangent{\cM}^{\vee}))$.

\begin{remark}
In \cite{MR2102846,MR1327535,MR3910470}, the operator $\frakh$ is written as $\delta\inv$.
We avoid this notation because $\frakh$ is not an inverse map of $\delta$,
and it is rather a homotopy operator.
\end{remark}

\begin{lemma}\label{lem:delta}
The operator $\delta$ satisfies $\delta^{2}=0$. That is,
\[0\to \Omega^{0}(\widehat{S}(\tangent{\cM}^{\vee})) 
\xrightarrow{\delta} \Omega^{1}(\widehat{S}(\tangent{\cM}^{\vee}))
\xrightarrow{\delta}\Omega^{2}(\widehat{S}(\tangent{\cM}^{\vee})) \xrightarrow{\delta} \cdots\]
forms a cochain complex. Moreover, it satisfies
\[\delta \circ \frakh +\frakh\circ \delta = \id - \pi_{0}\]
where $\pi_{0}:\Omega^{\bullet}(\widehat{S}(\tangent{\cM}^{\vee}))\to C^{\infty}(\cM)$
is the natural projection.
\end{lemma}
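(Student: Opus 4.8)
The plan is to prove both assertions by reducing them to a fiberwise, purely linear-algebraic statement: Lemma~\ref{lem:delta} is the graded incarnation of the classical Koszul resolution of $\KK$, with $\frakh$ playing the role of the standard contracting homotopy. Since both $\delta$ and $\frakh$ are $\smooth{\cM}$-linear, it suffices to establish the two identities at a single point, i.e.\ on $\Lambda^\bullet(V\dual)\otimes\widehat{S}(V\dual)$ where $V=\tangent{\cM}$ restricted to a point is the (graded) fiber. Choosing a homogeneous local frame $\{e_k\}$ of $\tangent{\cM}$, with associated coordinate $1$-forms $dx^k$ generating $\Lambda^\bullet(\tangent{\cM}\dual)$ and symmetric generators $y^k$ generating $\widehat{S}(\tangent{\cM}\dual)$, a direct inspection of the two defining formulas shows that
\[ \delta=\sum_k dx^k\wedge\frac{\partial}{\partial y^k}, \qquad \frakh=\frac{1}{p+q}\,\tilde{\frakh}, \quad\text{where}\quad \tilde{\frakh}=\sum_k y^k\,\iota_{e_k}, \]
the factor $\tfrac{1}{p+q}$ being applied on the component of bidegree $(p,q)$ (form degree $p$, symmetric degree $q$) and $\iota_{e_k}$ denoting contraction of the exterior factor. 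Both $\delta$ and $\tilde{\frakh}$ preserve the internal degree as well as the total polynomial degree $p+q$, while $\delta$ raises $p$ and lowers $q$ by one and $\tilde{\frakh}$ does the opposite.

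For $\delta^2=0$ I would write $\delta^2=\sum_{k,l}(dx^k\wedge dx^l\wedge)(\partial_{y^k}\partial_{y^l})$ and observe that the exterior factor is antisymmetric in $(k,l)$ while $\partial_{y^k}\partial_{y^l}$ is symmetric, so the double sum vanishes. This yields the cochain complex.

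The homotopy identity is the main content. Using the graded (anti)commutation relations $\{dx^k\wedge,\,\iota_{e_l}\}=\delta^k_l$ on $\Lambda^\bullet(\tangent{\cM}\dual)$ and $[\partial_{y^k},\,y^l]=\delta^k_l$ on $\widehat{S}(\tangent{\cM}\dual)$ --- together with the fact that the exterior (odd) operators commute with the symmetric (even) ones --- a short reordering of $\delta\tilde{\frakh}+\tilde{\frakh}\delta$ collapses to
\[ \delta\,\tilde{\frakh}+\tilde{\frakh}\,\delta=\sum_k dx^k\wedge\iota_{e_k}+\sum_k y^k\frac{\partial}{\partial y^k}, \]
which is the sum of the form-degree and symmetric-degree Euler operators and hence acts as multiplication by $p+q$ on the component of bidegree $(p,q)$. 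I would then restore the normalization: since $\delta$ and $\frakh$ both preserve $p+q$, the scalar $\tfrac{1}{p+q}$ can be pulled out of $\delta\frakh+\frakh\delta$ on each component with $p+q\geq 1$, giving $\delta\frakh+\frakh\delta=\id$ there; on the component of bidegree $(0,0)$ both $\delta$ and $\frakh$ vanish, which is precisely where $\id-\pi_0$ is zero. Hence $\delta\circ\frakh+\frakh\circ\delta=\id-\pi_0$.

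The one point requiring care --- and the main obstacle --- is matching the coordinate operators above with the sign-decorated coordinate-free definitions of $\delta$ and $\frakh$, and checking that the creation/annihilation relations hold in the graded sense. Because $\delta$ and $\frakh$ leave the internal degree unchanged, every intrinsic-degree Koszul sign appears as a spectator factor common to both sides, so the verification reduces to the ordinary (ungraded) Koszul computation with the signs inserted mechanically, in accordance with the convention recalled in the Introduction. This is why the classical relations $\delta\tilde{\frakh}+\tilde{\frakh}\delta=(p+q)\,\id$ and $\delta^2=0$ survive verbatim in the graded setting.
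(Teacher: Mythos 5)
Your argument is correct, and it is essentially the canonical one: the paper itself states Lemma~\ref{lem:delta} without proof, deferring to the Fedosov-construction references \cite{MR2102846,MR1327535,MR3910470}, where exactly this fiberwise Koszul-homotopy computation ($\delta$ and $\frakh$ as $\smooth{\cM}$-linear creation/annihilation operators whose anticommutator is the total Euler operator $p+q$, cancelled by the $\tfrac{1}{p+q}$ normalization away from bidegree $(0,0)$) is carried out. Your handling of the normalization and of the $(0,0)$ component is right, and your deferral of the internal-degree Koszul signs to the mechanical convention is consistent with the paper's own stated practice, so there is no gap.
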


We have the following theorem
\begin{theorem}[{\cite[Theorem~5.6]{MR3910470}}]\label{LStheorem}
Let $\cM$ be a finite dimensional graded manifold and $\nabla$ be
a torsion-free affine connection on $\cM$.
Then the covariant derivative $d^{\brevee{\nabla}^{\lightning}}$ decomposes as
\[ d^{\brevee{\nabla}^{\lightning}}=d^{\brevee{\nabla}}-\delta+\widetilde{\Amap} ,\]
where the operator $\widetilde{\Amap}:
\Omega^{\bullet}\big(\widehat{S}(\tangent{\cM}^{\vee})\big)
\to\Omega^{\bullet+1}\big(\widehat{S}(\tangent{\cM}^{\vee})\big)$,
is a (total) degree $+1$ derivation determined
by $\Amap\in\Omega^{1}\big(\cM,\widehat{S}^{\geq 2}(\tangent{\cM}^{\vee})
\otimes\tangent{\cM}\big)$, satisfying \[ \frakh\circ\Amap=0 .\]
\end{theorem}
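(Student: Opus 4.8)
The plan is to recognise the identity as the Fedosov normal form of the flat connection $\nabla^{\lightning}$, exploiting that $\nabla^{\lightning}$ is already known to be flat. First I would note that $\delta$, $d^{\nabla}$ and $d^{\nabla^{\lightning}}$ are all total-degree $+1$ derivations of the graded commutative algebra $\Omega^{\bullet}(\widehat{S}(\tangent{\cM}^{\vee}))$, with the wedge product on the form factor and the symmetric product on the fibre factor: $\delta$ is the Koszul derivation dual to the tautological insertion, while $d^{\nabla}$ and $d^{\nabla^{\lightning}}$ are the covariant derivatives of the two connections. Hence
\[ \widetilde{\Amap}:=d^{\nabla^{\lightning}}-d^{\nabla}+\delta \]
is again a total-degree $+1$ derivation, and being the difference of two connections corrected by the tensorial operator $\delta$ it is $\smooth{\cM}$-linear along the base. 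Since a fibrewise derivation of $\widehat{S}(\tangent{\cM}^{\vee})$ is determined by its restriction to the generators $\tangent{\cM}^{\vee}\to\widehat{S}(\tangent{\cM}^{\vee})$, this derivation is induced by a unique section $\Amap\in\Omega^{1}(\cM,\widehat{S}(\tangent{\cM}^{\vee})\otimes\tangent{\cM})$.

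Next I would grade everything by symmetric (polynomial) degree, under which $\delta$ lowers the degree by one, $d^{\nabla}$ preserves it, and the piece of $\widetilde{\Amap}$ coming from $\Omega^{1}(\cM,S^{k}(\tangent{\cM}^{\vee})\otimes\tangent{\cM})$ raises it by $k-1$. To place $\Amap$ in symmetric degree $\geq 2$ it suffices to show that the degree-lowering part of $d^{\nabla^{\lightning}}$ is exactly $-\delta$ and its degree-preserving part is exactly $d^{\nabla}$; dually this is the statement that
\[ \nabla^{\lightning}_{Y}\boldX = Y\odot\boldX+\nabla_{Y}\boldX \pmod{\sections{S^{\leq n-1}(\tangent{\cM})}},\qquad \boldX\in\sections{S^{n}(\tangent{\cM})}. \]
This follows by a direct induction on $n$ from the recursion~\eqref{formulaofpbw}: the top symbol of $\pbw^{\nabla}$ is symmetrisation, which produces the leading term $Y\odot\boldX$, while torsion-freeness of $\nabla$ identifies the next term with $\nabla_{Y}\boldX$ (compare the defining equation~\eqref{Bmap}). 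Consequently $\Amap$ has no components in symmetric degrees $0$ and $1$, that is $\Amap\in\Omega^{1}(\cM,\widehat{S}^{\geq 2}(\tangent{\cM}^{\vee})\otimes\tangent{\cM})$, which establishes the decomposition; note that flatness has not yet been used.

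The remaining and hardest point is the normalisation $\frakh\circ\Amap=0$, the analogue of Fedosov's gauge condition $\delta^{-1}r=0$. Writing out the flatness equation $(d^{\nabla}-\delta+\widetilde{\Amap})^{2}=0$ and using $\delta^{2}=0$ together with the torsion-free identity $d^{\nabla}\delta+\delta d^{\nabla}=0$, one obtains a Fedosov-type equation expressing $\delta\Amap$ through the curvature $\curvature$, the term $d^{\nabla}\Amap$, and a term quadratic in $\Amap$; applying $\frakh$ and the homotopy identity $\delta\circ\frakh+\frakh\circ\delta=\id-\pi_{0}$ of Lemma~\ref{lem:delta} (with $\pi_{0}\Amap=0$) then gives $\Amap=\frakh\bigl(\curvature+d^{\nabla}\Amap+\cdots\bigr)+\delta\frakh\Amap$. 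Flatness alone therefore leaves the gauge term $\delta\frakh\Amap$ undetermined, so the normalisation must come from the definition of $\nabla^{\lightning}$ itself. I would prove $\frakh\Amap=0$ by checking, inductively in symmetric degree, that the $\Amap$ extracted from the PBW recursion~\eqref{formulaofpbw} is $\frakh$-exact, i.e.\ produced by applying $\frakh$ to curvature terms, whence $\frakh\Amap=0$ because $\frakh^{2}=0$, which is immediate from the definition of $\frakh$. I expect this matching of the inductive structure of $\pbw^{\nabla}$ with the radial, $\frakh$-normalised gauge to be the main obstacle, since it requires tracking $\frakh$ through the recursion rather than merely counting degrees. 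Once it is in place, uniqueness of the $\frakh$-normalised solution pins down $\Amap$ and simultaneously expresses it through $\curvature$ and its covariant derivatives, exactly as used in Lemma~\ref{mainlemma}.
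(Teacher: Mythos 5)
First, a point of comparison: the paper does not actually prove this statement --- Theorem~\ref{LStheorem} is imported verbatim from \cite[Theorem~5.6]{MR3910470}, and the only argument given in Appendix~\ref{sec:A} is the subsequent Proposition, which \emph{uses} the theorem (flatness of $\nabla^{\lightning}$, $[\delta,d^{\nabla}]=0$, $\frakh\circ\Amap=0$ and $\pi_{0}\circ\Amap=0$) to derive the recursion~\eqref{eq:A}. So your proposal has to stand on its own. Its first two steps are sound in outline: $\widetilde{\Amap}:=d^{\nabla^{\lightning}}-d^{\nabla}+\delta$ is an $\Omega^{\bullet}(\cM)$-linear degree $+1$ derivation, hence determined by a section $\Amap\in\Omega^{1}(\cM,\widehat{S}(\tangent{\cM}^{\vee})\otimes\tangent{\cM})$, and the containment $\Amap\in\Omega^{1}(\cM,\widehat{S}^{\geq 2}(\tangent{\cM}^{\vee})\otimes\tangent{\cM})$ is correctly reduced to the assertion that $\nabla^{\lightning}_{Y}\boldX-Y\odot\boldX-\nabla_{Y}\boldX$ drops the symmetric filtration degree by at least one, which does follow by induction from~\eqref{formulaofpbw} and torsion-freeness (the case $n=1$ is the computation $\pbw^{\nabla}(X\odot Y)=XY-\nabla_{X}Y$).

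The genuine gap is the normalization $\frakh\circ\Amap=0$, which is the entire content of the theorem beyond degree counting. You rightly observe that flatness leaves the gauge term $\delta\frakh\Amap$ undetermined, so the condition must come from the PBW recursion itself; but your plan for extracting it --- show that ``the $\Amap$ extracted from the PBW recursion is $\frakh$-exact, i.e.\ produced by applying $\frakh$ to curvature terms'' --- is circular as stated. The only sense in which $\Amap_{n}$ is known to be ``produced by applying $\frakh$'' is the recursion~\eqref{eq:A}, and that recursion is itself a consequence of $\frakh\circ\Amap=0$; that is exactly how the appendix derives it. To close the argument you would need to produce, directly from~\eqref{formulaofpbw}, an independent expression for $\Amap_{n}$ and verify that its image under the symmetrization $\tangent{\cM}^{\vee}\otimes S^{n}(\tangent{\cM}^{\vee})\otimes\tangent{\cM}\to S^{n+1}(\tangent{\cM}^{\vee})\otimes\tangent{\cM}$ vanishes --- equivalently, that the top-degree component of $\sum_{k}\pm\,\Bmap(X_{k};\boldX^{\{k\}})$ is exactly $(n+1)\,\boldX$ with no correction term. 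That matching of the inductive structure of $\pbw^{\nabla}$ with the $\frakh$-normalized Fedosov gauge is precisely what \cite[Theorem~5.6]{MR3910470} supplies, and it is the step missing here. (A minor remark: $\frakh^{2}=0$ does hold, but in the graded setting it requires checking the Koszul signs rather than being immediate.)
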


\begin{remark}
The operator $\widetilde{A^{\nabla}}$ increases the cohomological degree by $+1$
while it preserves the internal degree. That is, although the total degree
of $\widetilde{A^{\nabla}}$ is $+1$, we have the internal degree
$\degree{\widetilde{A^{\nabla}}}=0$.
\end{remark}
Write
\[\Amap = \sum_{n\geq 2}\Amap_{n}, \quad \quad \Amap_{n}
\in\Omega^{1}(\cM,S^{n}(\tangent{\cM}^{\vee})\otimes\tangent{\cM}).\]

Let $R^{\brevee{\nabla}}\in\Omega^{2}\big(\cM;\End(\tangent{\cM})\big)$
denote the curvature of $\brevee{\nabla}$.

\begin{proposition}
We have the following recursive formula for $\Amap_{n}$:
\begin{gather*}
\Amap_{2}=\frakh \circ R^{\nabla}\, ,\\
\Amap_{n+1}=\frakh \circ \left( d^{\nabla}\Amap_{n}
+\sum_{p+q=n}\half [\Amap_{p}, \Amap_{q}] \right), \quad \forall n\geq 2.
\end{gather*}
\end{proposition}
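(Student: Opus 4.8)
The plan is to recognize this iterative formula as the standard Fedosov recursion and to extract it from the flatness of $\nabla^{\lightning}$. The starting point is the identity $(d^{\nabla^{\lightning}})^2=0$ (flatness of $\nabla^{\lightning}$) together with the decomposition $d^{\nabla^{\lightning}}=d^{\nabla}-\delta+\widetilde{\Amap}$ supplied by Theorem~\ref{LStheorem}. First I would expand the square of this odd operator. Using $\delta^2=0$ (Lemma~\ref{lem:delta}), the fact that $(d^{\nabla})^2$ is the derivation induced by the curvature $\curvature\in\Omega^2(\cM,\End(\tangent{\cM}))$, and the identity $d^{\nabla}\circ\delta+\delta\circ d^{\nabla}=0$ --- which holds precisely because $\nabla$ is torsion-free --- the cross terms collapse, and the vanishing of the square becomes, after passing from the derivations back to their generating sections, the \emph{Fedosov structure equation}
\[ \delta\Amap=\curvature+d^{\nabla}\Amap+\half\lie{\Amap}{\Amap} , \]
where $\lie{\argument}{\argument}$ is the bracket of fiberwise vector fields (equivalently, the commutator of the associated coderivations). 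The translation of the operator commutators $\lie{d^{\nabla}}{\widetilde{\Amap}}$ and $\lie{\delta}{\widetilde{\Amap}}$ into the section-level operations $d^{\nabla}\Amap$ and $\delta\Amap$, with the correct Koszul signs, is the first point that needs care.

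Next I would invert the Koszul operator. Applying $\frakh$ to the structure equation and using the homotopy identity $\delta\circ\frakh+\frakh\circ\delta=\id-\pi_{0}$ from Lemma~\ref{lem:delta}, the left-hand side becomes $\frakh(\delta\Amap)=(\id-\pi_{0}-\delta\circ\frakh)(\Amap)=\Amap$: here I use the two normalizations supplied by Theorem~\ref{LStheorem}, namely $\frakh\circ\Amap=0$ and $\Amap\in\Omega^1(\cM,\widehat{S}^{\geq 2}(\tangent{\cM}^{\vee})\otimes\tangent{\cM})$, the latter giving $\pi_{0}\Amap=0$. This yields the closed fixed-point equation
\[ \Amap=\frakh\left(\curvature+d^{\nabla}\Amap+\half\lie{\Amap}{\Amap}\right) . \]

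Finally I would decompose both sides according to the symmetric weight (the $S^{n}(\tangent{\cM}^{\vee})$-grading), writing $\Amap=\sum_{n\geq 2}\Amap_{n}$ with $\Amap_{n}\in\Omega^1(\cM,S^{n}(\tangent{\cM}^{\vee})\otimes\tangent{\cM})$, and read off the equation weight by weight. Since $d^{\nabla}$ preserves the symmetric weight while $\frakh$ raises it by one, and the curvature $\curvature$ carries weight one, the weight-$2$ component of the fixed-point equation is exactly $\Amap_{2}=\frakh\circ\curvature$, while for $n\geq 2$ the weight-$(n+1)$ component expresses $\Amap_{n+1}$ through $\frakh$ applied to $d^{\nabla}\Amap_{n}$ and to the quadratic terms $\half\lie{\Amap_{p}}{\Amap_{q}}$ for the range of $(p,q)$ matching that weight, which is the asserted recursion. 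The only genuinely delicate step is the bookkeeping in the first paragraph --- matching the graded commutators of the derivations $\widetilde{\Amap}$, $d^{\nabla}$ and $\delta$ with the section-level bracket, curvature and Koszul terms, and tracking how each piece shifts the symmetric weight --- since everything downstream is a formal consequence of the structure equation and the contraction homotopy $(\delta,\frakh)$.
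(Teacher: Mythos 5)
Your proposal is correct and follows essentially the same route as the paper: expand $(d^{\nabla^{\lightning}})^2=0$ using the decomposition from Theorem~\ref{LStheorem}, the identities $\delta^2=0$, $(d^{\nabla})^2=\curvature$ and $[\delta,d^{\nabla}]=0$ (torsion-freeness) to obtain the structure equation $\delta\Amap=\curvature+d^{\nabla}\Amap+\half[\Amap,\Amap]$, then apply $\frakh$ together with $\frakh\circ\Amap=0$, $\pi_0\circ\Amap=0$ and the homotopy identity of Lemma~\ref{lem:delta} to get the fixed-point equation, and finally read it off weight by weight using $\frakh\big(\Omega^2(\widehat{S}^q(T^\vee_\cM))\big)\subset\Omega^1(\widehat{S}^{q+1}(T^\vee_\cM))$. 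No gaps; the sign/weight bookkeeping you flag as delicate is exactly what the paper's proof also relies on implicitly.
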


\begin{proof}
By Theorem~\ref{LStheorem}, the covariant derivative
$d^{\brevee{\nabla}^{\lightning}} = d^{\brevee{\nabla}}-\delta+\Amap$
and satisfies $(d^{\brevee{\nabla}^{\lightning}})^{2}=0$.

By Lemma~\ref{lem:delta}, we know $\delta^{2}=0$
and $\delta\circ \frakh+\frakh \circ \delta = \id-\pi_{0}$. Also,
$(d^{\brevee{\nabla}})^{2}=R^{\brevee{\nabla}}$.
Since ${\nabla}$ is torsion-free, we have
\[[\delta, d^{\brevee{\nabla}}]=\delta\circ d^{\brevee{\nabla}}
+d^{\brevee{\nabla}}\circ \delta = 0.\]
As a result, $(d^{\brevee{\nabla^{\lightning}}})^{2}=0$ implies that
\[\delta\circ \Amap + \Amap\circ \delta = R^{\brevee{\nabla}} +
d^{\brevee{\nabla}}\Amap + \half [\Amap, \Amap] \]
By applying the operator $\frakh$, we get
\[\Amap = \frakh \circ \delta \circ \Amap = \frakh \circ \left(
R^{\brevee{\nabla}}+d^{\brevee{\nabla}}\Amap+ \half [\Amap, \Amap] \right)\]
because $ \frakh \circ \Amap=0$ and $\pi_{0}\circ \Amap=0$.

Since $\frakh\big( \Omega^2(\widehat{S}^q( T\dual_\cM))
\subset \Omega^{1}(\widehat{S}^{q+1}( T\dual_\cM))$,
applying the canonical projections
\[ \Omega^{1}(\cM,\hat{S}(\tangent{\cM}^{\vee})\otimes\tangent{\cM})
\to\Omega^{1}(\cM, S^{n}(\tangent{\cM}^{\vee})\otimes\tangent{\cM}) \]
(for each $n\geq 2$) to the equality
\[\Amap = \frakh \circ \left(
R^{\brevee{\nabla}}+d^{\brevee{\nabla}}\Amap+ \half [\Amap, \Amap] \right) 
\in \Omega^{1}(\cM,\hat{S}(\tangent{\cM}^{\vee})\otimes\tangent{\cM}) \]
yields the relations 
\begin{gather}
\Amap_{2}=\frakh \circ R^{\brevee{\nabla}} , \nonumber \\
\Amap_{n+1}=\frakh \circ \left( d^{\brevee{\nabla}}\Amap_{n}
+\sum_{p+q=n}\half [\Amap_{p}, \Amap_{q}] \right), \quad \forall n\geq 2. \label{eq:A}
\end{gather}
This completes the proof.
\end{proof}

\begin{corollary}\label{cor:An}
Under the same hypothesis as in Theorem~\ref{LStheorem}, the element
$\Amap_{n}\in\Omega^{1}(\cM,S^{n}(\tangent{\cM}^{\vee})\otimes\tangent{\cM})$, with $n\geq 2$,
is completely determined by the curvature $\curvature$ and its higher covariant
derivatives. In fact, $\Amap_{n} $ satisfies the recursive formula
\eqref{eq:A} involving $\Amap_k$, with $k\leq n-1$.
\end{corollary}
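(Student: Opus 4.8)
The plan is to argue by induction on $n$, taking the recursion~\eqref{eq:A} obtained in the preceding Proposition as the sole input; the corollary is really just the observation that unwinding that recursion never introduces anything beyond the curvature $\curvature$ and its iterated covariant derivatives. The recursion is already proved, so no new analytic work is needed here—only a bookkeeping of which data appear at each stage.

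For the base case, the first line of~\eqref{eq:A} reads $\Amap_{2}=\frakh\circ\curvature$, so $\Amap_{2}$ is produced from the curvature alone by applying the $\smooth{\cM}$-linear homotopy operator $\frakh$; since $\frakh$ raises the symmetric weight by one and lowers the cohomological degree by one, and $\curvature\in\Omega^{2}(\cM,\widehat{S}^{1}(\tangent{\cM}^{\vee})\otimes\tangent{\cM})$, this simultaneously confirms the expected weight $\Amap_{2}\in\Omega^{1}(\cM,S^{2}(\tangent{\cM}^{\vee})\otimes\tangent{\cM})$. For the inductive step I would assume that $\Amap_{k}$ has already been written as a universal expression in $\curvature$ and its higher covariant derivatives for every $k$ with $2\leq k\leq n$, and then invoke the second line of~\eqref{eq:A},
\[ \Amap_{n+1}=\frakh\circ\Big(d^{\nabla}\Amap_{n}+\sum_{p+q=n}\half\,[\Amap_{p},\Amap_{q}]\Big). \]
The constraint $p,q\geq 2$ forces $p,q\leq n-2<n$, so every factor in the bracket sum is covered by the induction hypothesis, while $d^{\nabla}\Amap_{n}$ is merely the covariant derivative of a quantity already built from $\curvature$ and its covariant derivatives. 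Applying $\frakh$ then yields $\Amap_{n+1}$ as a new expression of the same kind, and the weight behavior $\frakh\big(\Omega^{2}(\widehat{S}^{q}(\tangent{\cM}^{\vee}))\big)\subseteq\Omega^{1}(\widehat{S}^{q+1}(\tangent{\cM}^{\vee}))$ places it in $\Omega^{1}(\cM,S^{n+1}(\tangent{\cM}^{\vee})\otimes\tangent{\cM})$, closing the induction.

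I do not expect a genuine obstacle, since the recursion~\eqref{eq:A} already carries all the content; the only point requiring a little care is the bookkeeping of the symmetric weights, namely checking that the weight-$p$ and weight-$q$ components combine through the bracket and $d^{\nabla}$ into a weight-$n$ contribution in cohomological degree $2$, which $\frakh$ then lifts to weight $n+1$ in cohomological degree $1$. This homogeneity is exactly what guarantees that each application of the recursion is a finite expression terminating in the curvature term $\Amap_{2}$, so that $\Amap_{n}$ is genuinely determined by $\curvature$ and finitely many of its higher covariant derivatives.
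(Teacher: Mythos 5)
Your proposal is correct and matches the paper's intent exactly: the corollary is stated without a separate proof precisely because it is the evident induction on $n$ using the recursion~\eqref{eq:A}, with base case $\Amap_{2}=\frakh\circ\curvature$ and inductive step feeding $d^{\nabla}\Amap_{n}$ and the brackets $[\Amap_{p},\Amap_{q}]$ (all indices strictly below $n+1$) into $\frakh$. Your additional bookkeeping of the symmetric weights and cohomological degrees is consistent with the paper's observation that $\frakh\big(\Omega^{2}(\widehat{S}^{q}(T^{\vee}_{\cM}))\big)\subset\Omega^{1}(\widehat{S}^{q+1}(T^{\vee}_{\cM}))$, so nothing is missing.
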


\section*{Acknowledgments}

We would like to thank Ruggero Bandiera,  
Camille Laurent-Gengoux, Hsuan-Yi Liao, Rajan Mehta
and Luca Vitagliano for fruitful discussions and useful comments.
Seokbong Seol is grateful to the Korea Institute for Advanced Study
for its hospitality and generous support.

\printbibliography
\end{document}